\documentclass[11pt]{amsart}
\pdfoutput=1

%%%%%%%%%%%%%%%%%%%%%%%%%%% setlength %%%%%%%%%%%%%%%%%%%%%%%%%%%
\setlength{\textwidth}{5.46in} \setlength{\textheight}{8.0in} \setlength{\oddsidemargin}{0.47in} \setlength{\evensidemargin}{0.47in} \setlength{\footskip}{30pt} \addtolength{\textheight}{.695in} \addtolength{\voffset}{-.55in} % delete this line for 10pt
\setlength{\parindent}{0pt} %no indent for paragraph
%%%%%%%%%%%%%%%%%%%%%%%%%%% setlength %%%%%%%%%%%%%%%%%%%%%%%%%%%

%%%%%%%%%%%%%%%%%%%%%%%%%%% usepackage %%%%%%%%%%%%%%%%%%%%%%%%%%
%\usepackage[notref,notcite]{showkeys}
%\usepackage{amssymb}
%\usepackage{amsthm}
%\usepackage{amsfonts}
%\usepackage{amsmath}
%\usepackage{pmboxdraw}
\usepackage{verbatim} % for comment
\usepackage{color}
\usepackage[colorlinks=true, citecolor=black, filecolor=black, linkcolor=black, urlcolor=black]{hyperref}
%%%%%%%%%%%%%%%%%%%%%%%%%%% usepackage %%%%%%%%%%%%%%%%%%%%%%%%%%

%%%%%%%%%%%%%%%%%%%%%%%%%%%%%%% def %%%%%%%%%%%%%%%%%%%%%%%%%%%%%
\newcommand\subsubsec[1]{\textbullet~\emph{#1.}~}

\def\oneleg{\Psi}
\def\Gchiral{G^+}
\def\Phiplus{\Phi^+}
\def\Phiminus{\Phi^-}

\def\bp{{\bar\partial}}
\def\bs{\bigskip}
\def\bfs{\boldsymbol}
\def\ms{\medskip}

\def\pa{\partial}
\def\sm{\setminus}
\def\ss{\smallskip}

\def\wh{\widehat}
\def\wt{\widetilde}
\def\ve{\varepsilon}

\def\eff{ \mathrm{eff}}
\def\hol{ \mathrm{hol}}
\def\Aut{ \mathrm{Aut}}

\def\id{ \mathrm{id}}
\def\SLE{ \mathrm{SLE}}
\def\Re{ \mathrm{Re}}
\def\Im{ \mathrm{Im}}

\DeclareMathOperator{\Sing}{Sing}

\def\FF{\mathcal{F}}
\def\LL{\mathcal{L}}
\def\OO{\mathcal{O}}
\def\PP{\mathcal{P}}
\def\VV{\mathcal{V}}
\def\XX{\mathcal{X}}
\def\YY{\mathcal{Y}}

\def\C{\mathbb{C}}
\def\D{\mathbb{D}}
\def\E{\mathbf{E}}
\def\H{\mathbb{H}}
\def\P{\mathbf{P}}
\def\R{\mathbb{R}}
%%%%%%%%%%%%%%%%%%%%%%%%%%%%%%% def %%%%%%%%%%%%%%%%%%%%%%%%%%%%%

%%%%%%%%%%%%%%%%%%%%%%%%%% theoremstyle %%%%%%%%%%%%%%%%%%%%%%%%%
\theoremstyle{plain}
\newtheorem*{thm*}{Theorem}
\newtheorem{thm}{Theorem}[section]
\newtheorem{lem}[thm]{Lemma}

\newtheorem{prop}[thm]{Proposition}

\theoremstyle{definition}
\newtheorem*{eg*}{Example}
\newtheorem*{egs*}{Examples}
\newtheorem*{def*}{Definition}

\theoremstyle{remark}
\newtheorem*{rmk*}{Remark}
\newtheorem*{rmks*}{Remarks}
%%%%%%%%%%%%%%%%%%%%%%%%%% theoremstyle %%%%%%%%%%%%%%%%%%%%%%%%%

%%%%%%%%%%%%%%%%%%%%%%%%%%%%%%%% MR %%%%%%%%%%%%%%%%%%%%%%%%%%%%%
\makeatletter
\def\@MRnumber{}
\def\@scanforMR#1 #2\endscan{%
 \def\@MRnumber{#1}%
 }
\def\reviewed#1{\ifx#1\stop \let\next=\relax \else \ifx#1(\advance\count255 by1 \else\let\next=\relax \fi \let\next=\reviewed \fi \next}
\def\MR#1{\relax
 \ifhmode\unskip\spacefactor3000 \space\fi
 \count255=0 \reviewed#1\stop
 \ifnum \count255>0
 {\@scanforMR#1\endscan\href{http://www.ams.org/mathscinet-getitem?mr=\@MRnumber}{MR#1}}
 \else
 {\href{http://www.ams.org/mathscinet-getitem?mr=#1}{MR#1}}
 \fi}
\makeatother
%%%%%%%%%%%%%%%%%%%%%%%%%%%%%%%% MR %%%%%%%%%%%%%%%%%%%%%%%%%%%%%

\newcommand\arXiv[1]{\href{http://arxiv.org/abs/#1}{arXiv:#1}}
\newcommand\arxiv[1]{\href{http://arxiv.org/abs/#1}{arXiv: #1}}

\numberwithin{equation}{section}

\begin{document}
\title{Radial SLE martingale-observables}

%%%%%%%%%%%%%%%%%%%%%%%%%%%%% author %%%%%%%%%%%%%%%%%%%%%%%%%%%%
\author{Nam-Gyu Kang}
\address{Department of Mathematical Sciences, Seoul National University,\newline Seoul, 151-747, Republic of Korea}
\email{nkang@snu.ac.kr}
\thanks{The first author was partially supported by NRF grant 2010-0021628.
The second author was supported by NSF grant no. 1101735.}
\author{Nikolai~G.~Makarov}
\address{Department of Mathematics, California Institute of Technology,\newline Pasadena, CA 91125, USA}
\email{makarov@caltech.edu}
%%%%%%%%%%%%%%%%%%%%%%%%%%%%% author %%%%%%%%%%%%%%%%%%%%%%%%%%%%

\date{}%{\today}

%%%%%%%%%%%%%%%%%%% MSC, Key words and phrases %%%%%%%%%%%%%%%%%%
\subjclass[2010]{Primary 60J67, 81T40; Secondary 30C35}
\keywords{conformal field theory, radial SLE, martingale-observables }
%%%%%%%%%%%%%%%%%%% MSC, Key words and phrases %%%%%%%%%%%%%%%%%%

%%%%%%%%%%%%%%%%%%%%%%%%%% abstract %%%%%%%%%%%%%%%%%%%%%%%%%%
\begin{abstract}
We implement a version of radial conformal field theory in a family of statistical fields generated by central charge modification of the Gaussian free field and show that the correlation functions of such fields under the insertion of one-leg operator form a collection of radial SLE martingale-observables. 
We apply the renormalization procedure to the multi-point vertex fields with the neutrality condition to expand this collection and study its basic properties. 
\end{abstract}
%%%%%%%%%%%%%%%%%%%%%%%%%% abstract %%%%%%%%%%%%%%%%%%%%%%%%%%

\maketitle

\section{Introduction and results} \label{sec: intro}

In this paper we use the method of conformal field theory to study a certain family of radial SLE martingale-observables.
To be precise, we develop a version of conformal field theory in a simply connected domain $D$ with a marked interior point $q.$ This theory is based on non-random (central charge) modification of the Gaussian free field in $D$ with the Dirichlet boundary conditions. 
It is a well-known statement in physics that under the insertion of $\oneleg(p)/(\E\,\oneleg(p))$ with $p\in\pa D$ ($\E\,\oneleg$ is the 1-point function of $\oneleg$) all correlation functions of the fields in a certain family are martingale-observables for $\SLE_\kappa(D,p\to q),$ where $\kappa$ is the parameter of the modification.  
However, $\oneleg$ here is rather vaguely specified. 
We undertake the task of defining $\oneleg$ and prove the above statement: We define the one-leg “operator” $\oneleg$ as a primary field with the desired conformal dimensions at $p$ and $q.$ 
The resulting collection of martingale-observables is further expanded by the renormalization procedure.
(We also need these operations to define $\oneleg.$)

\ms One of the goals of this paper is to outline the differences between the chordal and radial case of theory at hand. 
(The detailed account of the chordal version, i.e., the case of a simply connected domain $D$ with a marked boundary point, can be found in \cite{KM11}.) 
In spite of many similarities, the radial case has richer structure than the chordal one and is found with new phenomena: For example, the modified Gaussian free field has an additive monodromy around the marked interior point $q$ and the Virasoro field has a double pole at this point. 
Unlike in the chordal version, the neutrality condition is not automatic from conformal invariance in the radial one.  
Another goal is to apply definitions and constructions in \cite{KM11} to a different conformal setting. 
It appears that this approach can be extended to much more general settings like general Riemann surfaces with marked points, arbitrary non-random pre-pre-Schwarzian modifications, and various patterns of insertion (e.g., $N$-leg operators).
However, we certainly don't claim that what we develop here is the only relevant radial conformal field theory. 
Another relevant theory, a twisted conformal field theory is related to radial SLE in \cite{KMZ12}.

\ms \subsection{Radial SLE and martingale-observables} \label{ss: intro MO} 
Since Schramm introduced SLE in \cite{Schramm00} as the only possible candidates for the scaling limits of interface curves in critical 2-D lattice models, SLE has been used with a remarkable success to prove some important conjectures in statistical physics. 
For example, see the work of Lawler-Schramm-Werner (\cite{LSW01a}, \cite{LSW04}) and Smirnov (\cite{Smirnov01}, \cite{Smirnov10}).

\ms \subsubsec{Radial SLE} For a simply connected domain $(D,p,q)$ with a marked boundary point $p$ and a marked interior point $q,$ radial Schramm-Loewner evolution (SLE) in $(D,p,q)$ with a positive parameter $\kappa$ is the conformally invariant law on random curves from $p$ to $q$ satisfying the so-called ``domain Markov property" (see \eqref{eq: D Markov} below). 
In technical terms: For each $z \in D,$ let $g_t(z)$ be the solution (which exists up to a time $\tau_z\in(0,\infty]$) of the equation
\begin{equation} \label{eq: g}
\partial_t g_t(z) = g_t(z)\frac{\xi_t+g_t(z)}{\xi_t-g_t(z)}, \quad (\xi_t = e^{i\theta_t},\theta_t=\sqrt\kappa B_t),
\end{equation}
where $g_0:(D, p,q)\to (\D,1,0)$ is a given conformal map and $B_t$ is a standard Brownian motion with $B_0=0.$
Then for all $t,$ 
$$w_t:(D_t,\gamma_t,q)\to(\D,1,0), \qquad w_t(z):=g_t(z)/\xi_t= g_t(z)e^{-i\sqrt{\kappa}B_t}$$
is a well-defined conformal map from 
$$D_t := \{z \in D: \tau_z>t\}$$
onto the unit disc $\D.$
It is known that the SLE stopping time $\tau_z$ (defined to be the first time when the solution of Loewner equation \eqref{eq: g} does not exist) satisfies
$\lim_{t\uparrow \tau_z} w_t(z) = 1.$
The \emph{radial SLE curve} $\gamma$ is defined by the equation
$$\gamma_t \equiv \gamma(t) := \lim_{z\to1} w_t^{-1}(z)$$
and satisfies the ``domain Markov property," 
\begin{equation} \label{eq: D Markov}
\mathrm{law}\,\left( \gamma[t,\infty)\,|\,\gamma[0, t]\right)\,=\,\mathrm{law}\,\gamma_{D_t,\gamma_t,q}[0,\infty).
\end{equation}
The sets $K_t:= \{z \in \overline{\D}: \tau_z\le t\}$ are called the \emph{hulls} of the SLE.

\ms \subsubsec{Martingale-observables} Many results in the SLE theory and its applications depend on the explicit form of certain martingale-observables. 
Suppose that a non-random conformal field $M$ of $n$ variables in the unit disc is $\Aut(\D,1,0)$-invariant.
(Let us recall the definitions. 
A non-random \emph{conformal} field $f$ is an assignment of a (smooth) function
$(f\,\|\,\phi): ~\phi U\to\C$
to each local chart $\phi:U\to\phi U.$
A non-random conformal field $f$ is invariant with respect to some conformal automorphism $\tau$ of $M$ if
for all $\phi,$ $(f\,\|\,\phi)=(f\,\|\,\phi\circ \tau^{-1}).$
See Section 3.3 in \cite{KM11}.)
Conformal invariance allows us to define $M$ in any simply connected domain $(D,p,q)$ with marked points $p$ and $q,$
$$(M_{D,p,q}\,\|\,\id) = (M\,\|\,w^{-1}),$$
where $w:(D,p,q)\to(\D,1,0)$ is a conformal map.
We say that $M$ is a \emph{martingale-observable} for $\SLE_\kappa$ if for any $z_1,\cdots ,z_n\in D,$ the process
$$M_t(z_1,\cdots, z_n)=M_{D_t,\gamma_t,q}(z_1,\cdots, z_n)$$
(stopped when any $z_j$ exits $D_t$) is a local martingale on SLE probability space. 
For instance, we can use the identity chart of $D,$ and then for $[h,h_*]$-differentials $M$ with conformal dimensions $[h_q,h_{q*}]$ at $q,$ we have 
$$M_t(z) = (w_t'(z))^h (\overline{w_t'(z)})^{h_{*}} (w_t'(q))^{h_q} (\overline{w_t'(q)})^{h_{q*}}M(w_t(z)).$$

\ms \subsubsec{$\SLE_0$ observables} The case $\kappa = 0$ reveals some aspects of ``field Markov property." 
Indeed, $\SLE_0$ curves are hyperbolic geodesics, and martingale-observables (of one variable) are non-random fields $F\equiv F_{D,p,q}$ with the property 
$$F\Big|_{D_t} = F_{D_t,\gamma_t,q}.$$
In a sense one can think of them as integrals of the motion $t \mapsto \{D_t,\gamma_t,q\}$ in the corresponding Teichm\"uller space.
The reader is invited to check that 
$$\arg\big[(1-w)w^{-3/2}w'\big],\qquad S_w + \frac38 \Big(\frac{w'}w\Big)^2\Big(1-\frac{4w}{(1-w)^2}\Big)$$
are $\SLE_0$ observables. 
Here,
$$S_w = N_w' -{N_w^2}/2, \qquad N_w = (\log w)'$$
are Schwarzian and pre-Schwarzian derivatives of $w.$
(A hint: 
For the first $\SLE_0$ observable, consider the radial version of Schramm-Sheffield martingale-observables, 
$$a\Big(\arg\frac{(1-w)^2}{w} - 2\big(\frac\kappa4-1\big)\arg\frac{w'}w\Big), \qquad (a = \sqrt{2/\kappa}),$$
(see the first example in Subsection~\ref{ss: EgsMO}) and normalize them so that the limit exists as $\kappa\to0.$
For the second $\SLE_0$ observable, consider the 1-point functions of the Virasoro fields,
$$\frac{c}{12} S_w + h_{1,2} \frac{w'^2}{w(1-w)^2} + h_{0,1/2}\frac{w'^2}{w^2},$$
where the central charge $c$ and the conformal dimensions $h_{1,2}, h_{0,1/2}$ are given by 
$$c = \frac{(3\kappa-8)(6-\kappa)}{2\kappa},\quad h_{1,2} = \frac{6-\kappa}{2\kappa},\quad h_{0,1/2} = \frac{(6-\kappa)(\kappa-2)}{16\kappa}.$$
See Example \eqref{eg: T hat} in Subsection~\ref{ss: one-leg}.)

\ms \subsubsec{$\SLE_\kappa$ observables} For $\kappa >0, $ the usual way to find martingale-observables of a given conformal type is using It\^o's calculus. 
A couple of well-known important examples are referred to below. 

\ss \textbf{Example} ($\kappa = 2$). The scalar (i.e., $[0,0]$-differential)
$$M =\frac{1-|w|^2}{|1-w|^2} = \frac{P_\D(1,w)}{P_\D(1,0)} = \frac{P_D(p,z)}{P_D(p,q)}$$
played an important role in the theory of LERW, see \cite{LSW04}. 
Here $P_D$ is the Poisson kernel of a domain $D.$ 

\ss \textbf{Example} ($\kappa = 6$). The martingale-observable
$$M_t(z) = e^{t/4}\frac{\sqrt[3]{1-w_t(z)}}{\sqrt[6]{w_t(z)}}$$
is a scalar with respect to $z$ and a $[1/8,1/8]$-differential with respect to $q.$
Lawler, Schramm, and Werner applied the optional stopping theorem to the martingale $M_t(e^{i\theta})\,(0\le\theta<2\pi)$ and estimated the probability that the point $e^{i\theta}$ is not swallowed by the $\SLE_6$ hull $K_t$ at time $t$ to be 
$$\P[\, e^{i\theta} \notin K_t\,] \asymp e^{-2t \wh h_q} \sqrt[3]{\sin \frac\theta2},\qquad 2\wh h_q = 1/4,\qquad (0\le\theta<2\pi).$$
The exponent $2\wh h_q = 1/4$ is one of many exponents in \cite{LSW01c}.
See the first example (derivative exponents on the boundary) in Subsection~\ref{ss: hol 1pt field} with $\kappa=6$ and $h=0.$

\ms We will present a certain collection of radial SLE martingale-observables not by It\^o's calculus but by means of conformal field theory.

\ms \subsection{A radial CFT} \label{ss: Radial CFT} 
We consider a simply connected domain $D$ with a marked point (or a puncture) $q\in D$ and develop an $\Aut(D,q)$-invariant (rotation invariant in the $(\D,0)$-uniformization) conformal field theory. 
The \emph{Gaussian free field} $\Phi_{(0)}$ in $D$ with Dirichlet boundary condition can be viewed as a Fock space field. 
It has the $n$-point correlation function 
$$\E[\Phi_{(0)}(z_1) \cdots\Phi_{(0)}(z_n)] = \sum \prod_k 2G(z_{i_k},z_{j_k}),$$
where $G$ is the Green's function for $D$ and the sum is over all partitions of the set $\{1,\cdots,n\}$ into disjoint pairs $\{i_k,j_k\}.$
A \emph{Fock space field} is a linear combination of \emph{basic fields}, which are, by definition, formal expressions written as Wick's product ($\odot$-product) of derivatives of $\Phi_{(0)}$ and Wick's exponentials $e^{\odot\alpha\Phi_{(0)}}\,(\alpha\in\C)$ of $\Phi_{(0)}.$ 
If $X_1,\cdots,X_n$ are Fock space correlational fields and $z_1,\cdots,z_n$ are distinct points in $D,$ then
a \emph{correlation function}
$$\E[X_1(z_1)\cdots X_n(z_n)]$$
can be defined by Wick's calculus.
See Lecture 1 in \cite{KM11} for more details.
(A more traditional notation for the correlation function is $\langle X_1(z_1)\cdots X_n(z_n)\rangle.$) 
The points $z_1,\cdots,z_n$ are called the nodes of $X_1(z_1)\cdots X_n(z_n).$

\ms We define central charge modifications $\Phi\equiv\Phi_{(b)}$ of the Gaussian free field $\Phi_{(0)}$ by 
$$\Phi_{(b)} = \Phi_{(0)} + \varphi,\qquad \varphi = -2b\,\arg \frac{w'}w, \qquad (b = \sqrt{\kappa/8}-\sqrt{2/\kappa}),$$
where $w:(D,q)\to(\D,0)$ is a conformal map from $D$ onto the unit disc $\D.$
The corresponding conformal field theory is $\Aut(D,q)$-invariant and has the central charge 
$$c = 1-12b^2 = \frac{(3\kappa-8)(6-\kappa)}{2\kappa}.$$
This choice of $\varphi$ will be explained in Subsection~\ref{ss: c}. 
The non-random harmonic function $\varphi$ on $D\setminus\{q\}$ is multivalued.

\ms The \emph{bosonic} field $\Phi_{(b)}$ generates \emph{OPE family} $\FF_{(b)},$ the algebra (over $\C$) spanned by $1,\pa^j\bp^k\Phi_{(b)}$ and $\pa^j\bp^k e^{*\alpha \Phi_{(b)}}\,(\alpha\in\C)$ under the OPE multiplication.
(See Section 2.2 in \cite{KM11} for the definition of OPE product. 
If $X$ is holomorphic, then the OPE product of two fields $X$ and $Y$ is the zeroth coefficient of the regular part in the operator product expansion $X(\zeta)Y(z)$ as $\zeta\to z.$)
For example, the OPE family $\FF_{(b)}$ contains 
$$1,\quad J_{(b)}:= \pa\Phi_{(b)}, \quad J_{(b)}*J_{(b)}, \quad \pa J_{(b)}*(\Phi_{(b)}*\Phi_{(b)}) , \quad J_{(b)}*e^{*\alpha \Phi_{(b)}},\quad \textrm{etc.,}$$ 
and the \emph{Virasoro field} $T_{(b)},$
$$T\equiv T_{(b)}=-\frac12 J_{(b)}* J_{(b)}+ib\pa J_{(b)}.$$
The OPE family can be extended to multi-point fields by taking (tensor) product of 1-point fields (as long as their nodes are different).

\ms \subsubsec{Multi-vertex operators} We extend $\FF_{(b)}$ by adding the generators, the bi-variant bosonic field $\Phiplus(z,z_0)$ and (chiral) multi-vertex operators (and their derivatives)
$$\OO^{(\bfs\sigma,\bfs\sigma_*)}(\bfs z),\,\,(\bfs\sigma = (\sigma_1\cdots,\sigma_n),\,\bfs\sigma_* = (\sigma_{1*}\cdots,\sigma_{n*}),\,\bfs z = (z_1,\cdots,z_n),\,z_j\in D\sm\{q\})$$
with the \emph{neutrality condition} 
$$\sum_{j=1}^n (\sigma_j + \sigma_{j*}) = 0.$$
(Though OPE calculus is somewhat formal, fields are well-defined as Fock space fields.) 
As in the chordal case, we will define the \emph{chiral bosonic fields} $\Phiplus_{(b)}(z,z_0)$ as multivalued fields. 
(See Subsection~\ref{ss: Phi+}.) 
In Subsection~\ref{ss: O} the (chiral) \emph{multi-vertex fields} $\OO^{(\bfs\sigma,\bfs\sigma_*)}(\bfs z)$ will be defined as 
$$M^{(\bfs\sigma,\bfs\sigma_*)}(\bfs z)\, e^{\odot i\sum \sigma_j\Phiplus_{(0)}(z_j) - \sigma_{j*}\Phiminus_{(0)}(z_j)},$$
where the non-random term $M^{(\bfs\sigma,\bfs\sigma_*)}(\bfs z)$ and formal fields $\Phiplus_{(0)},\Phiminus_{(0)}$ will be explained later. 
(The chiral bosonic field $\Phiplus_{(0)}(z,z_0)$ is $\Phiplus_{(0)}(z)-\Phiplus_{(0)}(z_0).$)
The formal field $\sum \sigma_j\Phiplus_{(0)}(z_j) - \sigma_{j*}\Phiminus_{(0)}(z_j)$ is a Fock space field if and only if the neutrality condition holds. 
The vertex fields $\OO^{(\bfs\sigma,\bfs\sigma_*)}$ are $[\bfs h,\bfs h_*]$-differentials:
\begin{equation} \label{eq: dim4O} 
h_j = \frac{\sigma_j^2}2-\sigma_j b, \qquad h_{j*} = \frac{\sigma_{j*}^2}2-\sigma_{j*} b.
\end{equation}
It is natural to add chiral vertex operators $\OO^{(\bfs\sigma,\bfs\sigma_*)}$ to $\FF_{(b)}$ since Ward's OPEs hold for them: as $\zeta\to z_j\in D\sm\{q\},$
\begin{equation} \label{eq: OPE4TO}
T(\zeta)\OO(\bfs z)\sim h_{j\phantom{*}}\frac{\OO(\bfs z)}{(\zeta-z_j)^2} + \frac{\pa_{z_j}\OO(\bfs z)}{\zeta-z_j}, \quad 
T(\zeta)\bar\OO(\bfs z)\sim \bar h_{j*}\frac{\bar\OO(\bfs z)}{(\zeta-z_j)^2} + \frac{\pa_{z_j}\bar\OO(\bfs z)}{\zeta - z_j},
\end{equation}
where $T \equiv T_{(b)}$ is the Virasoro field and $\OO\equiv\OO^{(\bfs\sigma,\bfs\sigma_*)}.$
Thus the vertex fields $\OO^{(\bfs\sigma,\bfs\sigma_*)}$ are primary from algebraic point of view (as long as the neutrality condition holds). 

\ms In the next subsection we will introduce the rooted multi-vertex fields and define the extended OPE family as the algebra spanned by the fields in the OPE family and the rooted multi-vertex fields with the neutrality conditions.
As in the chordal case, under the insertion of Wick's exponential 
$$e^{\odot -\frac a2 \wt\Phi_{(0)}(p,q)}, \qquad(a = \sqrt{2/\kappa}),$$ 
($\wt\Phi_{(0)}$ is the harmonic conjugate of the Gaussian free field $\Phi_{(0)},$ i.e., $\wt\Phi_{(0)}=2\,\Im\,\Phiplus_{(0)}$), all fields in the extended OPE family $\FF_{(b)}$ of $\Phi_{(b)}$ satisfy the ``field Markov property" with respect to radial SLE filtration. 
Some forms of this main theorem appeared in physics literature, e.g., \cite{BB04} and \cite{Cardy04}. 
The preliminary version of this main theorem can be stated as follows. 

\ms For any field $X$ in the extended OPE family $\FF_{(b)}$ of $\Phi_{(b)},$ the non-random field
\begin{equation} \label{eq: main}
\E\,e^{\odot -\frac a2 \wt\Phi_{(0)}(p,q)}\, X
\end{equation}
is a martingale-observable for radial $\SLE_\kappa.$

\ms \subsection{Algebra of vertex operators} \label{ss: main} 
Let us state the main theorem for the OPE family $\FF_{(b)}.$ 
\begin{thm} \label{main X}
For the tensor product $X=X_1(z_1)\cdots X_n(z_n)$ of fields $X_j$ in the OPE family $\FF_{(b)}$ of $\Phi_{(b)},$ the non-random fields
\begin{equation} \label{eq: main X}
\E\,e^{\odot -\frac a2 \wt\Phi_{(0)}(p,q)}\, X
\end{equation}
are martingale-observables for radial $\SLE_\kappa.$ 
\end{thm}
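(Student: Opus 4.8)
The plan is to verify directly that $M_t = M_{D_t,\gamma_t,q}(z_1,\cdots,z_n)$ is a local martingale, where $M = \E\,e^{\odot -\frac a2\wt\Phi_{(0)}(p,q)}\,X$. Using the $[\bfs h,\bfs h_*]$-differential rule with the uniformizing maps $w_t:(D_t,\gamma_t,q)\to(\D,1,0)$, I would write $M_t$ as the product of the conformal factors $(w_t'(z_j))^{h_j}(\overline{w_t'(z_j)})^{h_{j*}}$ and $(w_t'(q))^{h_q}(\overline{w_t'(q)})^{h_{q*}}$ times the correlation function evaluated at the moving nodes in the standard chart of $\D$. The first step is to extract the It\^o SDE for $w_t$ from the radial Loewner equation \eqref{eq: g}: passing to $\log$ gives $d\log w_t = \frac{1+w_t}{1-w_t}\,dt - i\sqrt\kappa\,dB_t$, so that in the $g_t$-frame the deterministic part generates the Loewner velocity field $v_\xi(z) = z\,\frac{\xi+z}{\xi-z}$ while $\theta_t = \sqrt\kappa B_t$ contributes, through $d\xi_t = i\sqrt\kappa\,\xi_t\,dB_t - \frac\kappa2\xi_t\,dt$, the quadratic-variation term $\tfrac\kappa2\,\pa_\theta^2 = \tfrac\kappa2(-\xi\pa_\xi - \xi^2\pa_\xi^2)$. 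Differentiating $M_t$ and collecting the $dt$-terms expresses the drift as the fixed operator $\mathcal L_{v_\xi} + \tfrac\kappa2\,\pa_\theta^2$ applied to the correlation function; the goal is to show this drift vanishes identically, which (up to the node-exit stopping time) yields the local-martingale conclusion.

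The cancellation is produced by Ward's identity together with the level-two degeneracy of the insertion. By the Ward OPEs \eqref{eq: OPE4TO} for $T \equiv T_{(b)}$, the Lie-derivative part is reproduced by a contour integral $\oint v_\xi(\zeta)\,\E[T(\zeta)\,\cdots]\,d\zeta$, which upon deformation collects residues at the bulk nodes (where it reproduces the differential action already carried by the conformal factors), at $q$, and at the one-leg. The nontrivial content sits at the one-leg, sitting at $\xi$: there $v_\xi$ has a \emph{simple} pole, $v_\xi(\zeta) = -2\xi^2/(\zeta-\xi) + O(1)$, and by the mode correspondence $\zeta^{-1}\pa_\zeta \leftrightarrow L_{-2}$ this residue sources $L_{-2}$ acting on the insertion, while the Brownian term $\tfrac\kappa2\,\pa_\theta^2$ sources $L_{-1}^2$ (since $\pa_\xi$ translates the insertion point and so acts as $L_{-1}$). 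The two contributions assemble into a multiple of $\big(L_{-2} - \tfrac\kappa4 L_{-1}^2\big)\oneleg$, the relative coefficient $\tfrac\kappa4$ emerging from the ratio of the two residues. Now $e^{\odot -\frac a2\wt\Phi_{(0)}(p,q)}$ is the one-leg operator $\oneleg$, a primary vertex field of charge $a = \sqrt{2/\kappa}$ whose boundary dimension, computed from \eqref{eq: dim4O} with $b = \sqrt{\kappa/8}-\sqrt{2/\kappa}$, is $h = h_{1,2} = (6-\kappa)/(2\kappa)$; since $2h+1 = 6/\kappa$, the standard BPZ coefficient is $\tfrac{3}{2(2h+1)} = \tfrac\kappa4$, and at the matching central charge $c = 1-12b^2$ the level-two null-vector relation $\big(L_{-2} - \tfrac\kappa4 L_{-1}^2\big)\oneleg = 0$ holds. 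Hence the drift vanishes.

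The main obstacle is the radial-specific geometry at the interior marked point $q$, where the chordal argument of \cite{KM11} must be modified. As noted in the introduction, $T_{(b)}$ has a \emph{double} pole at $q$ and $\Phi_{(b)}$ has additive monodromy around $q$, so Ward's identity in the $(\D,1,0)$-uniformization acquires a residue contribution at $q = 0$ that is absent in the chordal case. I would show that this $q$-contribution is exactly the Lie-derivative term carried by the factors $(w_t'(q))^{h_q}(\overline{w_t'(q)})^{h_{q*}}$ with interior dimension $[h_{0,1/2},h_{0,1/2}]$ fixed by rotation-invariance, using that $v_\xi(0) = 0$ and $v_\xi'(0) = 1$, so that only an $L_0$-type (dimension) action survives at $q$ and pairs with the double pole of $T$ there. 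A second technical point is single-valuedness: although $\wt\Phi_{(0)} = 2\,\Im\,\Phiplus_{(0)}$ is multivalued, the rooting at $(p,q)$ makes $e^{\odot -\frac a2\wt\Phi_{(0)}(p,q)}$ a genuine Fock-space field precisely because the charges $\pm a/2$ at $p$ and $q$ satisfy the neutrality condition; this is where the radial neutrality requirement—not automatic here, unlike the chordal covariance—enters the argument. The remaining bookkeeping of the first-order ($L_{-1}$) and $L_0$ terms, together with the antiholomorphic sector governed by the $\bar\OO$ Ward OPE in \eqref{eq: OPE4TO}, follows the chordal template.
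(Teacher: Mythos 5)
Your proposal follows essentially the same route as the paper: It\^o's formula reduces the claim to the vanishing of the drift $\frac\kappa2\pa_\theta^2+\LL_{v_\xi}$ acting on $\wh\E_\xi X$, which is the BPZ--Cardy equation (Proposition~\ref{BPZ4X}), itself obtained by combining Ward's equation for the Loewner vector field (with the residue at $q$ coming from the double pole of $T$) with the level-two degeneracy $L_{-2}\oneleg=\frac\kappa4 L_{-1}^2\oneleg$ of the one-leg operator. The only imprecision is your bookkeeping at $q$: for this theorem the observable carries no covariance factor $(w_t'(q))^{h_q}(\overline{w_t'(q)})^{h_{q*}}$ (it cancels in the normalization $\oneleg/\E\,\oneleg$); instead the $q$-residue contributes the constant $H_q^\eff=a^2/4-b^2=2h_{0,1/2}$, which is absorbed through the fusion rule $h-H_q^\eff=h^2/a^2$ --- but this is exactly the constant you identify, so the argument goes through.
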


\ms \subsubsec{Rooted multi-vertex fields} We will extend this theorem to the extended OPE family, particularly to the rooted multi-vertex fields with the neutrality condition. 
As one or several nodes of (chiral) multi-vertex fields approach the marked interior point $q,$ their correlation functions diverge. 
Normalizing multi-vertex fields and taking a limit (rooting procedure) leads to the definition of \emph{rooted vertex fields},
$$\OO^{(\bfs\sigma,\bfs\sigma_*;\tau,\tau_*)}(\bfs z) = M^{(\bfs\sigma,\bfs\sigma_*;\tau,\tau_*)}(\bfs z)\, e^{\odot i(\tau\Phiplus_{(0)}(q) - \tau_*\Phiminus_{(0)}(q)+ \sum \sigma_j\Phiplus_{(0)}(z_j) - \sigma_{j*}\Phiminus_{(0)}(z_j)}.$$
The expression of the non-random term $M^{(\bfs\sigma,\bfs\sigma_*;\tau,\tau_*)}(\bfs z)$ is not simple. 
It is given by 
$$M^{(\bfs\sigma,\bfs\sigma_*;\tau,\tau_*)}(\bfs z) = (w'(q))^{h_q}(\overline{w'(q)})^{h_{q*}} \prod_{j} \,M_j \prod_{j<k} I_{j,k},$$
where $w$ is a conformal map from $(D,q)$ to $(\D,0)$ and 
$$M_j =(w_j')^{h_j} (\overline{w_j'})^{h_{j*}}w_j^{(b+\tau)\sigma_j} \bar w_j^{(b+\tau)\sigma_{j*}} (1-|w_j|^2)^{\sigma_j\sigma_{j*}},\quad (w_j = w(z_j), w_j' = w'(z_j)). $$ 
The interaction terms $I_{jk}$ are given by 
$$I_{jk}(z_j,z_k) = (w_j-w_k)^{\sigma_j\sigma_k}(\bar w_j-\bar w_k)^{\sigma_{j*}\sigma_{k*}} (1-w_j\bar w_k)^{\sigma_j\sigma_{k*}} (1-\bar w_j w_k)^{\sigma_{j*}\sigma_k}.$$
See \eqref{eq: dim4O} for $h_j, h_{j*}$ and \eqref{eq: qdim4O} below for the conformal dimensions $h_q,h_{q*}$ at $q.$
We will explain the nature of this rooting procedure in Subsection~\ref{ss: O*}. 
(Of course, in the special case $\tau=\tau_*=0,$ the rooted vertex fields are multi-vertex fields.)

\ms Several properties are not affected by rooting procedure. 
For example, the rooted vertex fields $\OO^{(\bfs\sigma,\bfs\sigma_*;\tau,\tau_*)}$ are differentials of dimensions $[\bfs h,\bfs h_*]$ (see \eqref{eq: dim4O}) with respect to $\bfs z\in (D\sm\{q\})^n$ and Ward's OPEs~\eqref{eq: OPE4TO} hold for them.
However, the conformal dimensions $[h_q,h_{q*}]$ at $q$ are 
\begin{equation} \label{eq: qdim4O}
h_q = \frac{\tau^2}2,\qquad h_{q*} = \frac{\tau_*^2}2.
\end{equation}
Although the rooted vertex fields are defined at $\bfs z$ with $z_j \in D\sm\{q\},$ Wick's part of them contains the values of $\Phi_{(0)}^\pm$ at $q.$

\ms Applying the rooting procedure (normalizing the product of multi-vertex fields properly and then taking a limit) we arrive to the definition of the \emph{normalized tensor product} of rooted vertex fields as
$$\OO^{(\bfs\sigma_1,\bfs\sigma_{1*};\tau_1,\tau_{1*})} \star \OO^{(\bfs\sigma_2,\bfs\sigma_{2*};\tau_2,\tau_{2*})} =\OO^{(\bfs\sigma_1+\bfs\sigma_2,\bfs\sigma_{1*}+\bfs\sigma_{2*};\tau_1+\tau_2,\tau_{1*}+\tau_{2*})}.$$
We can view $\bfs\sigma$ as a divisor, a map $\bfs\sigma: D\sm\{q\}\to\R$ which takes the value $0$ at all points except for finitely many points. 
The collection of divisors forms a $\R$-linear space.
For multi-point vertex fields (as special rooted fields), their $\star$-product can be either OPE multiplication or tensor product. 
Wick's part of rooted vertex fields has sophisticated monodromy structure (which will not be studied in this paper).
Applying the rooting procedure to the bi-variant bosonic field $\Phiplus_{(b)}$, we will define the rooted bosonic field $\Phiplus_{(b),\star}.$
We extend $\FF_{(b)}$ by adding the generators, the rooted bosonic field $\Phiplus_{(b),\star},$ the rooted vertex fields with the neutrality condition, and their derivatives.
This extended collection of fields is called the \emph{extended OPE family} of $\Phi_{(b)}.$

\ms \subsubsec{One-leg operator} The so-called \emph{$N$-leg operator} is a rooted multi-vertex operator 
$$\OO^{(\bfs a, \bfs 0; -\frac N2a, -\frac N2a)}, \qquad (\bfs a = (a,\cdots,a),\bfs 0 = (0,\cdots,0)\in\R^N).$$
Denote by $\oneleg$ the one-leg operator. 
Its conformal dimensions are
$$h =h_{1,2} :=\frac{a^2}2-ab = \frac{6-\kappa}{2\kappa},\qquad h_q = h_{q*}=\frac{a^2}8=\frac1{4\kappa}, \qquad H_q:=h_q+h_{q*} = \frac1{2\kappa}$$
(recall that $a = \sqrt{2/\kappa}, b = (\kappa/4-1)a$) and 
$$\oneleg(z) = \Big(\frac{w'}{w}\Big)^h |w'_q|^{2h_q}e^{\odot ia(\Phiplus_{(0)}(z,q) + \frac12\Phi_{(0)}(q))},$$
where $w$ is a conformal map from $(D,q)$ to $(\D,0)$ and $w_q' = w'(q).$
Note that Wick's exponential $e^{\odot -\frac a2 \wt\Phi_{(0)}(p,q)}$ is not in the extended OPE family $\FF_{(b)}$ of $\Phi_{(b)}$ while the one-leg operator $\oneleg$ belongs to $\FF_{(b)}.$ 

\ms For a rooted field $\OO,$ $\E\,e^{\odot -\frac a2 \wt\Phi_{(0)}(p,q)}\, \OO$ does not make sense since both terms $\OO$ and $e^{\odot -\frac a2 \wt\Phi_{(0)}(p,q)}$ have a vertex at $q.$ 
Therefore, we need normalization. 
We can rewrite \eqref{eq: main} for $X = \OO$ as 
$$\frac{\E\,\oneleg(p,q)\star \OO}{\E\,\oneleg(p,q)}$$
and denote it by $\wh\E\,\OO.$
The main theorem holds for the rooted multi-vertex fields with the neutrality condition.

\begin{thm}\label{main O} 
For any rooted fields $\OO \equiv \OO^{(\bfs\sigma,\bfs\sigma_*;\tau,\tau_*)}$ with the neutrality condition, the non-random fields 
$$\wh\E\,\OO$$
are martingale-observables for radial $\SLE_\kappa.$ 
\end{thm}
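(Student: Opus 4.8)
The plan is to reduce Theorem~\ref{main O} to a single drift-cancellation computation performed in the radial geometry, and then to propagate the conclusion through the rooting procedure. First I would check that $\wh\E\,\OO$ is a bona fide non-random conformal field of the expected type. Since $\oneleg$ is itself a rooted vertex field (the $N=1$ instance of the $N$-leg operator) and the $\star$-product of rooted vertex fields is again a rooted vertex field with summed parameters, the product $\oneleg(p,q)\star\OO$ is a rooted vertex field carrying a single vertex at $q$ and preserving the neutrality condition. Dividing by the scalar $\E\,\oneleg(p,q)$ cancels the $q$-charge contributed by $\oneleg$ and leaves a non-random $[\bfs h,\bfs h_*]$-differential in $\bfs z\in(D\sm\{q\})^n$ that is $\Aut(\D,1,0)$-invariant after the $(\D,1,0)$-uniformization. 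Consequently $M_t(\bfs z):=(\wh\E\,\OO)_{D_t,\gamma_t,q}(\bfs z)$ is well defined through the differential transformation rule under $w_t$, and the assertion ``$M_t$ is a martingale-observable'' becomes equivalent to the statement that $M_t$ has vanishing drift under the radial stochastic flow~\eqref{eq: g}.

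\ms The core step is to express that drift through a radial Ward identity and to see it cancel. Let $v_t$ denote the radial Loewner vector field in the $(\D,0)$-chart. Applying It\^o's formula to the transformation rule, and using $d\xi_t=i\sqrt\kappa\,\xi_t\,dB_t-\tfrac\kappa2\,\xi_t\,dt$, the drift of $M_t$ assembles into the action of a second-order operator combining the Lie derivative $\LL_{v_t}$ along $v_t$ with the quadratic It\^o correction from the driving term $\xi_t$. Ward's identity for the insertion $\wh\E$ converts Lie-derivative actions into contour integrals of the Virasoro field $T\equiv T_{(b)}$ against $v_t$; after this conversion the drift becomes proportional to the insertion at $p$ of the level-two descendant $\big(L_{-2}-\tfrac\kappa4 L_{-1}^2\big)\oneleg$. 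Because $\oneleg$ is primary of dimension $h_{1,2}=(6-\kappa)/(2\kappa)$ at $p$ and the central charge is $c=(3\kappa-8)(6-\kappa)/(2\kappa)$, one has $2h_{1,2}+1=6/\kappa$ and hence $3/\big(2(2h_{1,2}+1)\big)=\kappa/4$, so this is exactly the degenerate (null-vector) combination and it vanishes. Here Ward's OPE~\eqref{eq: OPE4TO} and the primality of the vertex fields guarantee that the $T$-insertions against $v_t$ reproduce precisely the Lie-derivative terms needed for the cancellation.

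\ms With the drift computation in hand, I would first establish the theorem for multi-vertex fields, i.e.\ the special case $\tau=\tau_*=0$ in which $\OO$ carries no charge at $q$. There the argument runs in parallel with the proof of Theorem~\ref{main X}: the correlation functions under the one-leg insertion are governed by Ward's OPE and the primary transformation law, so the drift vanishes. For a general rooted field I would then invoke the rooting procedure itself: $\OO^{(\bfs\sigma,\bfs\sigma_*;\tau,\tau_*)}$ arises as a renormalized limit of multi-vertex fields as some nodes tend to $q$, the renormalization being by a non-random, conformally covariant factor that supplies exactly the $q$-dimensions $h_q=\tau^2/2$, $h_{q*}=\tau_*^2/2$ of~\eqref{eq: qdim4O}. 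Since each approximating multi-vertex correlation is already a martingale-observable and the renormalizing factor transforms correctly under $w_t$, the local-martingale property passes to the limit, yielding the claim for $\wh\E\,\OO$.

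\ms I expect the main obstacle to be the drift-cancellation step genuinely in the radial setting, rather than the passage to the limit. Two radial-specific features must be controlled inside the Ward-identity contour manipulation: the Virasoro field $T_{(b)}$ has a double pole at the marked interior point $q$, and the modified field $\Phi_{(b)}$ has an additive monodromy around $q$, so the integral of $v_t\,T$ picks up contributions at $q$ and is sensitive to the choice of branch. One has to verify that these extra contributions reassemble precisely into the dimensions $h_q,h_{q*}$ at $q$ and leave no residual drift; this is also where the neutrality condition, which in the radial case is not automatic from conformal invariance, enters. Justifying the interchange of the rooting limit with the stochastic flow (local uniform convergence up to the relevant stopping times) is comparatively routine but should be stated with care.
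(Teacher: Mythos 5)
Your core mechanism is the same as the paper's: insert the one-leg operator, use Ward's identities to trade Lie derivatives for Virasoro insertions, and cancel the It\^o second-derivative term against the level-two null vector $(L_{-2}-\tfrac{1}{2a^2}L_{-1}^2)\oneleg=0$ (your coefficient $\kappa/4=1/(2a^2)$ is the right one, and this is exactly Proposition~\ref{TVd2V} combined with the fusion rule $h-H_q^\eff=h^2/a^2$). Where you genuinely diverge is in the treatment of the root at $q$. The paper does \emph{not} pass to the limit at the level of stochastic processes: it first proves Ward's identities directly for the rooted fields (Proposition~\ref{Ward4V*}), where the rooting limit produces an extra term $(h_q+h_{q*})\,\E[\OO]$, then derives a BPZ--Cardy equation~\eqref{eq: BPZ4O} carrying the extra summand $(\wh h_q+\wh h_{q*})\wh\E_\xi[\OO]$, and finally does a single It\^o computation in which this summand cancels the extra drift $(\wh h_q+\wh h_{q*})M_t\,dt$ coming from the covariance of $M$ at $q$ and $g_t'(0)=e^t$. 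Your route --- prove the $\tau=\tau_*=0$ case and then take the rooting limit of already-established local martingales --- can be made to work, and it buys you the economy of not restating Ward/BPZ--Cardy for rooted fields; the paper's route buys explicit equations (\eqref{eq: Ward4V*}, \eqref{eq: BPZ4O}) that are reused elsewhere (e.g.\ for the restriction formula and the derivative exponents).

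Two points need more care than you give them. First, $\wh\E\,\OO$ is not merely a $[\bfs h,\bfs h_*]$-differential in $\bfs z$: dividing by $\E\,\oneleg$ does not remove the covariance at $q$, and the observable has dimensions $[\wh h_q,\wh h_{q*}]=[\tau(\tau-a)/2,\,\tau_*(\tau_*-a)/2]$ there; these enter the very definition of the martingale-observable through the factors $(w_t'(q))^{\wh h_q}(\overline{w_t'(q)})^{\wh h_{q*}}$ and are the source of the extra drift term, so they cannot be suppressed in the setup. Second, an abstract locally uniform limit of continuous local martingales need not be a local martingale; your limit step is sound only if phrased as $C^2$-convergence of the explicit deterministic functions $m^\ve(\xi,t)=(R^\ve_\xi\,\|\,g_t^{-1})$ to $m(\xi,t)$, so that the It\^o drift coefficient (which vanishes for each $\ve$) converges to the drift coefficient of the limit --- this is essentially the content of the paper's Proposition~\ref{Ward4V*}, so the work you defer as ``routine'' is precisely the computation the paper carries out there. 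You should also note that the normalizing factor in~\eqref{eq: normalization} must be taken in a fixed (time-independent) chart so that it passes through the It\^o calculus as a constant.
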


\ms For a field $X$ in the extended OPE family $\FF_{(b)}$ of $\Phi_{(b)}$ we will introduce $\wh X$ such that 
$$\E\,\wh X = \wh\E\,X.$$ 
For example, 
\begin{gather*} 
\wh\OO^{(\sigma,\sigma_*;\tau,\tau_*)} = (w'_q)^{\wh h_q}(\overline{w'_q})^{\wh h_{q*}} (w')^ h (\overline{w'})^{ h_*} w^{(b-a/2+\tau)\sigma} \bar w^{(b-a/2+\tau_*)\sigma_*} \\
(1-w)^{a\sigma}(1-\bar w)^{a\sigma_*}(1-|w|^2)^{\sigma\sigma_*}\,
e^{\odot (i\sigma\Phiplus_{(0)}(z)-i\sigma_*\Phiminus_{(0)}(z)+i \tau\Phiplus_{(0)}(q)-i \tau_{*}\Phiminus_{(0)}(q))}, 
\end{gather*}
where the conformal dimensions $[\wh h_q,\wh h_{q*}]$ at $q$ are given by
$$\wh h_q = \frac{\tau^2}2 - \frac{\tau a}2,\qquad \wh h_{q*} = \frac{\tau_*^2}2- \frac{\tau_* a}2.$$
The 1-point field $\E\,\wh\OO$ has 9 terms. 
Thus it requires some tedious computation to verify directly that $\E\,\wh\OO$ is a martingale-observable for radial $\SLE_\kappa.$ 
We denote by $\wh\FF_{(b)}$ the image of $\FF_{(b)}$ under this correspondence $X\mapsto\wh X.$
Theorems \ref{main X} and \ref{main O} can be extended without difficulty to the non-random fields $\E \wh X,$ $\wh X \in \wh\FF_{(b)}.$

\ms \textbullet~\textbf{Remark on the one-leg operator.} 
In all physics papers the one-leg operator is specified with 
$$H_q = 2h_{0,1/2} := \frac{a^2}{4} - b^2 = \frac{(\kappa-2)(6-\kappa)}{8\kappa}$$
as in the restriction exponent (\cite{Lawler05}) and the partition function (\cite{Lawler09}). 
We can achieve this by introducing the \emph{effective one-leg operator} 
$$\oneleg^\eff := \oneleg \,\PP(q),$$
where $\PP(q)$ is the ``puncture operator" defined as a $[-b^2/2,-b^2/2]$-differential with respect to $q$ and $\PP(q) \equiv 1$ in the identity chart of $\D.$ 
(The effective one-leg operator is not a field in the extended OPE family $\FF_{(b)}$ of $\Phi_{(b)}.$ 
However, as a Fock space field, it is still in Ward's family of $\Phi_{(b)}$ i.e., $\oneleg^\eff$  and $\Phi_{(b)}$ have the same Virasoro field $T$ in $D\sm\{q\}.$)
Of course, insertion operator $\oneleg/\E\oneleg$ remains the same.

\ms \subsection{Examples of SLE martingale-observables} In Subsection~\ref{ss: EgsMO} we present several examples of radial SLE martingale-observables including Friedrich-Werner's formula. 
In the chordal case with $\kappa = 8/3$ $(c=0),$ it is well known (\cite{FW03}) that the $n$-point function $\E\,[\wh T(x_1)\cdots \wh T(x_n)\,\|\,\id_\H\,]$ of Virasoro field coincides with Friedrich-Werner's function $B(x_1,\cdots,x_n):$
$$B(x_1,\cdots,x_n) = \lim_{\ve\to0}\frac{\P(\SLE_{8/3}\textrm{ hits all }[x_j,x_j+i\ve\sqrt2])} {\ve^{2n}}.$$
We derive the radial version of this formula. 
See Theorem~\ref{radial FW}.

\ms We also use the one-leg operator $\oneleg$ to present field theoretic proof of the restriction property of radial $\SLE_{8/3}$ that for all hull $K,$
$$\P(\SLE_{8/3} \textrm{ avoids }K) = |\psi_K'(1)|^\lambda (\psi_K'(0))^\mu,\qquad (\lambda=5/8,\,\mu = 5/48),$$
where $\psi_K$ is the conformal map $(\D\sm K,0)\to (\D,0)$ satisfying $\psi_K'(0)>0.$
In particular, we explain the restriction exponents $\lambda$ and $\mu$ in terms of conformal dimensions of the one-leg operators: 
$$\lambda = h(\oneleg) := \frac{a^2}2-ab = \frac{6-\kappa}{2\kappa},\qquad \mu=H_q^\eff(\oneleg):=\frac{a^2}4-b^2 = \frac{(\kappa-2)(6-\kappa)}{8\kappa},$$
where the \emph{effective conformal dimension} $H_q^\eff(\oneleg)$ is defined by $h_q + h_{q*} - b^2$ so that $H_q^\eff(\oneleg)=H_q(\oneleg^\eff).$

\ms Examples of 1-point rooted vertex observables include Lawler-Schramm-Werner's derivative exponents (\cite{LSW01c}) of radial SLEs on the boundary: given $h$
$$\E[|w'_t(e^{i\theta})|^h \mathbf{1}_{\{\tau_{e^{i\theta}} > t\}}] \asymp e^{-2\wh h_q t} \Big(\sin^2\frac{\theta}2\Big)^{a\sigma/2} ,$$
where $\sigma$ and $\wh h_q$ are given by 
$$\sigma = \frac a 4\big(\kappa-4 + \sqrt{(\kappa-4)^2+16\kappa h } \big), \qquad \wh h_q =\frac{\sigma^2}{8} + \frac{a\sigma}4.$$

\ms\section{Radial CFT} \label{sec: Radial CFT} 

After a brief review on a stress tensor and the Virasoro field in an abstract and general setting, we introduce central charge modifications of the Gaussian free field in a simply connected domain $D$ with a marked interior point $q.$
These modifications equip the Gaussian free field with additive monodromy around $q.$ 
For example, see \cite{Dubedat09}. 
Under these modifications, the Virasoro field has a double pole at $q.$
We define Ward's functionals in terms of Lie derivatives $\LL_v$ ($v$ is a meromorphic vector field) in Subsection~\ref{ss: Wv} and then derive Ward's equations in Subsection~\ref{ss: Ward's in D}.

\ms\subsection{Stress tensor} \label{ss: W}
\ms For reader's convenience, we briefly review the definitions of a stress tensor and the Virasoro field. See Lecture 4 and 5 in \cite{KM11} for more details. Suppose $A^+$($A^-,$ respectively) is a Fock space \emph{holomorphic} (\emph{anti-holomorphic}, respectively) quadratic differential in a domain $D.$
Let $v$ be a non-random holomorphic vector field defined in some neighborhood of $p(\in D).$
We define the residue operator $A_v^+$($A_v^-$, respectively) as an operator on Fock space fields:
$$
(A_v^+X)(z)= \frac1{2\pi i}\oint_{(z)} vA^+\,X(z), \qquad
(A_v^-X)(z)= -\frac1{2\pi i}\oint_{(z)} \bar vA^-\,X(z)
$$
in a given chart $\phi,\;\phi(p)=z.$
A pair $$W=(A^+,A^-)$$ of a holomorphic quadratic differential $A^+$ and an anti-holomorphic quadratic differential $A^-$ is called a \emph{stress tensor} for a Fock space field $X$ in $D$ if for all non-random local vector fields $v,$ the so-called ``residue form of Ward's identity"
\begin{equation} \label{eq: LvWv}
\LL_vX= A^+_vX+ A^-_vX
\end{equation}
holds in the maximal open set $D_\hol (v)$ where $v$ is holomorphic. 
We recall the definition of Lie derivatives (see Section 3.4 in \cite{KM11}):
$$(\LL_v X\,\|\, \phi) = \frac d{dt}\Big|_{t=0} (X\,\|\, \phi\circ\psi_{-t}),$$
where $\psi_t$ is a local flow of $v,$ and $\phi$ is an arbitrary chart. 
We define the $\C$-linear part $\LL_v^+$ and anti-linear part $\LL_v^-$ of the Lie derivative $\LL_v$ by 
$$2\LL_v^+ = \LL_v-i\LL_{iv},\qquad 2\LL_v^- = \LL_v+i\LL_{iv}.$$
We denote by $\FF(W)\equiv\FF(A^+,A^-)$ \emph{Ward's family} of $W,$ the linear space of all Fock space fields $X$ with a common stress tensor $W.$

\ms Let us recall some cases of the transformation laws. 
A non-random conformal field $f$ is a \emph{differential} of conformal dimensions (or degrees) $[\lambda,\lambda_*]$ if for any two overlapping charts $\phi$ and $\wt\phi,$ we have
$$f = (h')^\lambda(\overline{h'})^{\lambda_*}\wt{f}\circ h,$$
where $h=\wt\phi\circ\phi^{-1}:~ \phi(U\cap\wt U)\to \wt\phi(U\cap\wt U)$ is the transition map, and $f$ (resp. $\wt f$ ) is the notation for $(f\,\|\,\phi),$ (resp. $(f\,\|\,\wt\phi)$). 
\emph{Pre-pre-Schwarzian forms}, \emph{pre-Schwarzian forms}, and \emph{Schwarzian forms} of order $\mu(\in\C)$ are fields with transformation laws 
$$f=\wt{f}\circ h +\mu \log h',\qquad f=h'\wt{f}\circ h +\mu N_h, \qquad f=(h')^2\wt{f}\circ h + \mu S_h,$$
respectively, where 
$$N_h =(\log h')',\qquad S_h = N_h' -{N_h^2}/2$$
are pre-Schwarzian and Schwarzian derivatives of $h.$ 
Transformation laws can be extended to the random conformal fields. 
See Section 3.2 in \cite{KM11}.
%Recall the formula for the Lie derivatives of differentials and forms: 
%\renewcommand{\theenumi}{\alph{enumi}}
%{\setlength{\leftmargini}{1.7em}
%\begin{enumerate}
%\ss\item $\LL_vX=\left(v\pa+\bar v\bar\pa+\lambda v'+\lambda_*\bar v'\right)X$ for a $[\lambda,\lambda_*]$-differential $X;$
%\ss\item $\LL_vX=\left(v\pa+v'\right)X +\mu v'$ for a pre-Schwarzian form $X$ of order $\mu;$ 
%\ss\item $\LL_vX=\left(v\pa+2v'\right)X +\mu v'''$ for a Schwarzian form $X$ of order $\mu.$
%\end{enumerate}}

\ms A Fock space field $T$ is called to be the \emph{Virasoro field} for Ward's family $\FF(A,\bar A)$ if
\renewcommand{\theenumi}{\alph{enumi}}
{\setlength{\leftmargini}{1.7em}
\begin{enumerate}
\ss\item $T\in\FF(A,\bar A),$ and
\ss\item $T-A$ is a non-random holomorphic Schwarzian form.
\end{enumerate}}

\ms\subsection{Central charge modification} \label{ss: c}

 For a simply connected domain $D$ with a marked interior point $q\in D,$ and a conformal map $$w\equiv w_{D,q}:~(D,q)\to(\D,0),$$
from $D$ onto the unit disc $\D = \{z\in\C:|z|<1\},$
we consider
$$\Phi\equiv \Phi_{(b)}=\Phi_{(0)} + \varphi,$$
where $\Phi_{(0)}$ is the Gaussian free field in $D$ and $\varphi$ is a (multivalued) harmonic function in $D\sm\{q\}.$
We require that $\varphi$ is a real part or an imaginary part of a pre-pre-Schwarzian form of order $ib$ for some fixed parameter $b.$ 
This parameter $b$ is related to the central charge $c$ in the following way:
$$c = 1 - 12b^2.$$
We also require that $\varphi$ does not depend on the choice of the conformal map.
Thus we consider
$$\varphi = -2b\arg \frac{w'}w, \qquad (b\in\R),$$ 
or
$$\varphi = 2ib \log|w'|, \,\, 2ib\log \Big|\frac{w'}w\Big|, \qquad (ib\in\R\setminus\{0\}).$$
In the latter case, the parameter $b$ corresponds to $c>1.$
However, throughout this paper, we only consider conformal field theory with $c\le1,$ which has connection with $\SLE_\kappa.$ In this case, we have 
$$c = 1-\frac32\frac{(\kappa-4)^2}\kappa= \frac{(3\kappa-8)(6-\kappa)}{2\kappa}.$$

\ms For a real parameter $b,$ we now define 
\begin{equation} \label{eq: GFFb}
\Phi\equiv \Phi_{(b)}=\Phi_{(0)} + \varphi,\qquad \varphi=-2b\arg \frac{w'}w.
\end{equation}
Since the multivalued function $\varphi$ does not depend on the choice of the conformal map, the ``bosonic" field, $\Phi_{(b)}$ in $D\setminus\{q\}$ is invariant with respect to $\Aut(D,q).$ 
Also we define 
\begin{equation} \label{eq: Jb}
J\equiv J_{(b)}=\pa \Phi=J_{(0)}+j,\qquad j=\pa\varphi=ib\left(\frac{w''}{w'}-\frac{w'}{w}\right).
\end{equation}
\ms We denote by $\FF_{(b)}$ the OPE family of $\Phi_{(b)}.$ 
(Recall that $\FF_{(b)}$ is the algebra (over $\C$) spanned by $1,\pa^j\bp^k\Phi_{(b)}$ and $\pa^j\bp^k e^{*\alpha \Phi_{(b)}}\,(\alpha\in\C)$ under the OPE multiplication.)
The fields in $\FF_{(b)}$ are invariant with respect to $\Aut(D,q)$ since the OPE coefficients of two conformally invariant fields are conformally invariant.
\begin{rmks*}
{\setlength{\leftmargini}{1.7em}
\begin{enumerate}
\renewcommand{\theenumi}{\alph{enumi}}
\item The 1-point function $\varphi$ has monodromy $4\pi b$ around $q.$
\smallskip\item The 1-point function $j$ has a simple pole at $q,$ and $J$ is holomorphic in $D\setminus\{q\}.$
\smallskip\item As a pre-Schwarzian form (of order $ib$), the current $J$ is conformally invariant with respect to $\Aut(D,q).$
\end{enumerate}
}
\end{rmks*}

\ms The Virasoro field $T_{(b)}$ for $\FF_{(b)}$ in the radial case is modified in the same way (see Proposition~\ref{Tb} below) as in the chordal case. 
However, it has different properties in the radial case: 
\renewcommand{\theenumi}{\alph{enumi}}
{\setlength{\leftmargini}{1.7em}
\begin{enumerate}
\ss\item
its 1-point function is not trivial even in the disc uniformization
$$\E\, T_{(b)}(z) = -\frac{b^2}2\frac{1}{z^2};$$
\item it has a double pole at $q;$
\ss\item with the choice of $A_{(b)}$ such that $\E\, A_{(b)} = 0$
\begin{equation} \label{eq: TA}
T_{(b)} = A_{(b)} + \frac c{12} S_w -\frac{b^2}2 \Big(\frac{w'}w\Big)^2,
\end{equation}
where 
$c=c(b)=1-12 b^2$ is the central charge of $\Phi_{(b)}.$ 
\end{enumerate}}
 
\begin{prop} \label{Tb}
The field $\Phi_{(b)}$ has a stress tensor in $D\sm\{q\},$ and the Virasoro field for $\FF_{(b)}$ is
\begin{equation} \label{eq: Tb}
T\equiv T_{(b)}=-\frac12 J* J+ib\pa J.
\end{equation}
\end{prop}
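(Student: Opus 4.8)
The plan is to establish the two assertions separately: that $\Phi_{(b)}$ admits a stress tensor in $D\sm\{q\}$, and that $-\frac12 J* J+ib\,\pa J$ is the Virasoro field for $\FF_{(b)}$. Since $\Phi_{(b)}=\Phi_{(0)}+\varphi$ differs from the Gaussian free field by the non-random pre-pre-Schwarzian form $\varphi$ of order $ib$, I would reduce everything to the known structure of $\Phi_{(0)}$ together with the effect of a non-random modification, exactly as in the chordal treatment of \cite{KM11}. The only genuinely radial inputs are that $j=\pa\varphi=ib(w''/w'-w'/w)$ is holomorphic on $D\sm\{q\}$ with a simple pole at $q$, and that $J=J_{(0)}+j$ is a pre-Schwarzian form of order $ib$ (see \eqref{eq: Jb} and the Remarks above).

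For the stress tensor, recall from \cite{KM11} that $\Phi_{(0)}$ carries the stress tensor with holomorphic part $A_{(0)}=-\frac12 J_{(0)}* J_{(0)}$, that $T_{(0)}=A_{(0)}$ is its Virasoro field of central charge $1$, and that $J_{(0)}(\zeta)J_{(0)}(z)\sim-(\zeta-z)^{-2}$. Because $j$ is non-random it produces no Wick contractions, so the two-point OPE of $J$ is unchanged, and the candidate holomorphic stress tensor is the centered field $A_{(b)}:=-\frac12 J* J+ib\,\pa J-\E(-\frac12 J* J+ib\,\pa J)$. I would verify the residue form of Ward's identity \eqref{eq: LvWv} for $\Phi_{(b)}$ by matching the singular parts of the operator products $A_{(b)}(\zeta)\Phi_{(b)}(z)$ and $A_{(b)}(\zeta)J(z)$: the term $-\frac12 J* J$ reproduces the free-field first- and second-order poles, while the improvement term $ib\,\pa J$ supplies precisely the anomalous third-order pole reflecting the order-$ib$ pre-Schwarzian transformation of $J$. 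Since $j$ is singular at $q$, this identity, and hence the stress tensor, holds only on $D\sm\{q\}$, which also accounts for the double pole of $T$ at $q$.

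It remains to check the two defining properties of the Virasoro field. Property (a), that $T=-\frac12 J* J+ib\,\pa J$ lies in $\FF_{(b)}=\FF(A_{(b)},\bar A_{(b)})$, is immediate since $T$ is an OPE polynomial in $J=\pa\Phi_{(b)}$. For property (b) I would compute the one-point function. Expanding
$$T=-\tfrac12 J_{(0)}* J_{(0)}-j\,J_{(0)}+ib\,\pa J_{(0)}-\tfrac12 j^2+ib\,j',$$
the three terms containing $J_{(0)}$ have vanishing mean, so with the free-field one-point value $\E[-\frac12 J_{(0)}* J_{(0)}]=\frac1{12}S_w$ (the $c=1$ Schwarzian) one obtains
$$\E\,T=\tfrac1{12}S_w-\tfrac12 j^2+ib\,j'.$$
Substituting $j=ib(w''/w'-w'/w)$ and using $S_w=N_w'-N_w^2/2$ with $N_w=w''/w'$ collapses the non-random remainder to $\frac{c}{12}S_w-\frac{b^2}2(w'/w)^2$ with $c=1-12b^2$, recovering \eqref{eq: TA} and the stated disc-chart value $-\frac{b^2}2 z^{-2}$. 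Because this one-point function transforms as a Schwarzian form of order $c/12$, the centered field $A_{(b)}=T-\E\,T$ is a genuine quadratic differential with $\E\,A_{(b)}=0$, and $T-A_{(b)}=\E\,T$ is the required non-random holomorphic Schwarzian form.

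The main obstacle is the anomaly bookkeeping in the last step: one must track how the Schwarzian contribution $\frac1{12}S_w$ inherited from the $c=1$ free field combines with the $-b^2 S_w$ produced by the modification so that the coefficients assemble to exactly $c=1-12b^2$. Secondary care is needed for the radial singularity at $q$—the simple pole of $j$ forces the double pole of $T$ and restricts the stress tensor to $D\sm\{q\}$—and for the $4\pi b$ additive monodromy of $\varphi$ around $q$, which must be checked to leave $J* J$ and $\pa J$ single-valued so that $T$ and $A_{(b)}$ are well defined there.
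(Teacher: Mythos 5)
Your proposal is correct and follows essentially the same route as the paper: define the holomorphic quadratic differential $A_{(b)}$ (your centered field $T-\E\,T$ coincides with the paper's explicit $A_{(0)}+(ib\pa-j)J_{(0)}$, since $\E\,A_{(0)}=0$ and $\E[(ib\pa-j)J_{(0)}]=0$), verify Ward's OPE for $\Phi_{(b)}$ by reducing to the free-field OPEs plus the non-random correction, and then check that $T-A_{(b)}=\frac{c}{12}S_w-\frac{b^2}{2}(w'/w)^2$ is a non-random Schwarzian form of order $c/12$. Your explicit computation of $-\frac12 j^2+ib\,j'=-b^2S_w-\frac{b^2}{2}(w'/w)^2$ correctly fills in the step the paper only asserts.
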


\begin{proof} As in the chordal case, computation is straightforward since $\Phi_{(b)}$ is a real part of pre-pre-Schwarzian form. 
We define
\begin{equation} \label{eq: Ab}
A\equiv A_{(b)}=A_{(0)}+(ib\pa -j)J_{(0)}.
\end{equation}
Then $A$ is a holomorphic quadratic differential in $D\sm\{q\}$ with a simple pole at $q.$
As in the chordal case, $(ib\pa -j)J_{(0)}$ is a quadratic differential.

\ms We claim that $\Phi$ has a stress tensor $W=(A,\bar A).$
Since $\Phi$ is a real part of pre-pre-Schwarzian form, we just need to check Ward's OPE in $\D\setminus\{0\}$, that is,
\begin{equation} \label{eq: OPE(A,Phi)}
A(\zeta) \Phi(z) \sim \frac{J(z)}{\zeta-z}+ib\frac{1}{(\zeta-z)^2}.
\end{equation}
However, it follows directly from the following operator product expansions:
$$A_{(0)}(\zeta)\Phi_{(0)}(z)\sim \frac{J_{(0)}(z)}{\zeta-z},\qquad \pa J_{(0)}(\zeta)\Phi_{(0)}(z)\sim \frac{1}{(\zeta-z)^2},$$
and
$$j(\zeta)J_{(0)}(\zeta)\Phi_{(0)}(z)\sim\frac{ib}{z(\zeta-z)}.$$
\ms Finally, we claim that $T$ is the Virasoro field for $\FF_{(b)}.$
It is clear that $T$ has a stress tensor $W$ since it is a stress tensor for all fields in the OPE family of $\Phi.$
Also, $T$ is a Schwarzian form of order $c/12$.
Indeed,
\begin{equation} \label{eq: TA2}
T = A + \frac{1-12b^2}{12} S_w-\frac{b^2}{2}\left(\frac{w'}{w}\right)^2,
\end{equation}
where $S_w = (w''/w')'- (w''/w')^2/2$ is Schwarzian derivative of $w.$
The equation \eqref{eq: TA2} follows from \eqref{eq: Jb}, \eqref{eq: Tb}, and \eqref{eq: Ab}.
\end{proof}

\ms \subsection{Ward's functionals} \label{ss: Wv}
Let us recall the definition of Ward's functional.
(See Sections 4.5 -- 4.6 in \cite{KM11}.)
For an open set $U$ such that $\bar U \subset \bar D\sm\{q\}$ and a smooth vector field $v$ in $\bar U,$ we define 
$$W^+(v;U)=\frac1{2\pi i}\int_{\pa U} vA-\frac1{\pi }\int_{U} (\bp v)A,$$
$W^-(v;U) = \overline{W^+(v;U)},$ and $W(v;U) = 2\,\Re\,W^+(v;U).$
Then the correlations of $W^+(v;U)$ with Fock space functionals $\XX$ in $\bar D\sm\{q\}$ are well-defined provided that the nodes of $\XX$ are in $D_\hol \cap U.$ 
Recall that $D_\hol(v)$ is the maximal open set where $v$ is holomorphic.
By Green's formula we can symbolically write $W^+(v;U)$ as a correlation functional
$$W^+(v;U)=\frac1{\pi }\int_{U} v(\bp A),$$
so that we can replace $A$ with the Virasoro field $T.$
Recall that the following statements are equivalent:
\renewcommand{\theenumi}{\alph{enumi}}
{\setlength{\leftmargini}{1.7em}
\begin{enumerate} 
\ss\item The residue form of Ward's identity~\eqref{eq: LvWv} holds for $X$ in $U \cap D_\hol(v).$
\ss\item Ward's OPE holds for $X,$ i.e., for $z\in U \cap D_\hol(v),$
$$\Sing_{\zeta\to z}[A(\zeta)X(z)]= (\LL_{k_\zeta}^+X)(z), \quad \Sing_{\zeta\to z}[A(\zeta)\bar X(z)]= (\LL_{k_\zeta}^+\bar X)(z),$$
where $\Sing_{\zeta\to z}[A(\zeta)X(z)]$ is the singular part of the operator product expansion $A(\zeta)X(z)$ as $\zeta\to z$ and $k_\zeta$ is the (local) vector field defined by $(k_\zeta\,\|\,\phi)(\eta) = 1/(\zeta-\eta)$ for a given chart $\phi:U\to\phi U.$
\ss\item For $z\in U \cap D_\hol(v),$
$$\E\, \YY\,\LL(v,U) X(z) =\E\,W(v;U)X(z) \YY$$
for all correlation functionals $\YY$ with nodes in $(D\sm \bar U)\sm\{q\}.$
\end{enumerate}}

\begin{rmk*}
In the case that $\phi$ is a chart in an open set $U,$ we write $\LL(v,U)$ for the Lie derivative in $U.$ 
In the identity chart of $\D\sm\{0\},$ we simply write $\LL_v$ for $\LL(v,\D\sm\{0\}).$ 
We sometimes write $W_v^+$ for $W^+(v;D\sm\{q\})$ (see its definition below).
\end{rmk*}

\ms Let $v$ be a meromorphic vector field in $D$ continuous up to the boundary, and let $\{p_j\}$ be the poles of $v.$
We define Ward's functional $W^+(v;D\sm\{q\})$ by 
$$W^+(v;D\sm\{q\}) = \lim_{\ve\to0} W^+(v;U_\ve),$$
where $U_\ve = D\sm B(q,\ve) \sm\bigcup B(p_j,\ve).$ 
Thus we have 
\begin{equation} \label{eq: Wplus}
W^+(v;D\sm\{q\})=\frac1{2\pi i}\int_{\pa D} vA-\frac1{\pi }\int_{D} (\bp v)A,
\end{equation}
where $\bp v$ is interpreted as a distribution.
Somewhat symbolically, we have 
$$W^+(v;D\sm\{q\})=\frac1{2\pi i}\int_{\pa D} vA-\sum_j\frac1{2\pi i}\oint_{(p_j)} vA-\frac1{2\pi i}\oint_{(q)} vA.$$
For fields $X_j\in\FF(W)$ and $z_j\in D_\hol(v),$ Ward's identity 
\begin{equation} \label{eq: Ward in Dq}
\E\,W^+(v;D\sm\{q\})\, X_1(z_1)\cdots X_n(z_n) = \E\,\LL^+(v,D\sm\{q\})\,(X_1(z_1)\cdots X_n(z_n))
\end{equation}
holds.

\ms We now define Ward's functional $W^+(v;D)$ in $D$ in terms of $W^+(v;D\sm\{q\})$ and the residue operator $T_v$ at $q:$ 
$$W^+(v;D) = W^+(v;D\sm\{q\}) + T_v(q),\qquad T_v(z) = \frac1{2\pi i}\oint_{(z)} vT.$$
We also define $W^-(v;D) = \overline{W^+(v;D)},$ and $W(v;D) = 2\,\Re\,W^+(v;D).$

\ms Given a meromorphic vector field $v$ with a local flow $z_t$ in $\wh\C,$
we define its reflected vector field $v^\#$ with respect to the unit circle $\partial \mathbb{D}$ by the vector field of the reflected flow $z_t^\#$ of $z_t.$
Indeed, if $z_t$ is the flow of $v$, then $z_t^\# = 1/\bar z_t$ is the reflected flow and
its vector field $v^\#$ is given by the equation
$$\dot{z}^\# = -\overline{\dot{z}/z^2} = -\overline{v(z)}(z^\#)^2.$$
Thus
$$v^\#(z) = -\overline{v\left(\frac{1}{\bar z}\right)} z^2.$$
We now represent the quadratic differential $A$ in terms of Ward's functionals with the Loewner vector field $v_\zeta.$

\ms \begin{prop} \label{represent A}
Suppose $A$ is a holomorphic quadratic differential in $\D\setminus\{0\}$ and $W=(A,\bar A).$
If $A$ is continuous and real on the boundary (in all standard boundary chart), then
\begin{equation} \label{eq: A}
(A\,\|\,\id_{\D\sm\{0\}})(\zeta)=\frac1{2\zeta^2}\left(W^+\left(v_\zeta;\D\sm\{0\}\right)+ W^-\left(v_{\zeta^*};\D\sm\{0\}\right)\right),
\end{equation}
where the Loewner vector field $v_\zeta$ is given by
$$(v_\zeta\,\|\,\id)(z) = z \frac{\zeta+z}{\zeta-z},$$
and $\zeta^*:=1/\bar{\zeta}$ is the symmetric point of $\zeta$ with respect to the unit circle.
\end{prop}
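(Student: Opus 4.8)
The plan is to compute both Ward functionals on the right of \eqref{eq: A} by the residue calculus built into \eqref{eq: Wplus}, and to show that everything except the residue at $\zeta$ cancels. First I would record the pole/zero structure of the Loewner field. In the identity chart $v_\zeta(z)=z(\zeta+z)/(\zeta-z)$ has a single simple pole in $\overline{\D}$, at $z=\zeta$ with $\Res_{z=\zeta}v_\zeta=-2\zeta^2$, together with a simple zero at the puncture $z=0$; its reflection has its pole at $\zeta^*=1/\bar\zeta\notin\overline{\D}$, so $v_{\zeta^*}$ is holomorphic in $\D$ (still vanishing simply at $0$). A direct check gives the reflection identity $v_{\zeta^*}=v_\zeta^{\#}$, and this is exactly what couples the $W^-$ term to $v_\zeta$.

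Second, I would evaluate the functionals through \eqref{eq: Wplus}. Since $\bp v_\zeta$ is a point mass at $\zeta$ and vanishes near the puncture, the bulk term produces $-\Res_{z=\zeta}(v_\zeta A)=2\zeta^2A(\zeta)$ --- precisely the quantity the formula must reproduce --- so that $W^+(v_\zeta;\D\sm\{0\})=\frac1{2\pi i}\int_{\partial\D}v_\zeta A+2\zeta^2A(\zeta)$. For $v_{\zeta^*}$ the bulk term is absent because its pole lies outside $D$, whence $W^+(v_{\zeta^*};\D\sm\{0\})=\frac1{2\pi i}\int_{\partial\D}v_{\zeta^*}A$ and $W^-(v_{\zeta^*};\D\sm\{0\})$ is the complex conjugate of this boundary integral. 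Both statements agree with the defining limit $\lim_{\ve\to0}W^+(v;U_\ve)$ because $A$ is no worse than a simple pole at $q$ while $v_\zeta,v_{\zeta^*}$ vanish there, so the small circle around $0$ contributes nothing.

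It then remains to show that the two boundary integrals cancel, i.e.\ $\overline{\frac1{2\pi i}\int_{\partial\D}v_{\zeta^*}A}=-\frac1{2\pi i}\int_{\partial\D}v_\zeta A$; this is the heart of the matter. The inputs are the reality hypothesis on $A$ --- in the identity chart it says $z^2A(z)\in\R$ for $|z|=1$, equivalently $\overline{A(z)}=z^4A(z)$ there --- together with the circle relations $\bar z=1/z$, $\overline{dz}=-z^{-2}\,dz$, and the reflection identity $v_{\zeta^*}=v_\zeta^{\#}$, which on $|z|=1$ reads $\overline{v_{\zeta^*}(z)}=-v_\zeta(z)/z^2$. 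Substituting these into the conjugated integrand yields the pointwise identity $\overline{v_{\zeta^*}(z)A(z)}\,\overline{dz}=v_\zeta(z)A(z)\,dz$ on $\partial\D$; combined with the sign from $\overline{2\pi i}=-2\pi i$ this gives the asserted cancellation. Adding the surviving pieces leaves $W^+(v_\zeta;\D\sm\{0\})+W^-(v_{\zeta^*};\D\sm\{0\})=2\zeta^2A(\zeta)$, which upon division by $2\zeta^2$ is \eqref{eq: A}.

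The step I expect to be the real obstacle is this boundary cancellation: one must propagate three separate conjugations (in $A$, in $v_{\zeta^*}$, and in the line element $dz$) without dropping a sign, use reality in exactly the form dictated by ``real in the standard boundary chart,'' and verify that the reflected Loewner field $v_\zeta^{\#}$ genuinely equals $v_{\zeta^*}$. A minor but necessary check on the side is the puncture: the representation is clean only because $A$ has at most a simple pole at $q$, so that the first-order vanishing of $v_\zeta$ and $v_{\zeta^*}$ at $q$ removes any contribution from there --- a feature of the stress tensor $A_{(b)}$, whose only singularity at the marked point is a simple pole.
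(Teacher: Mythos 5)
Your proposal is correct and follows essentially the same route as the paper: the identity $v_\zeta^{\#}=v_{\zeta^*}$, the split of $W^+$ into boundary and bulk terms with $\bp v_\zeta=-2\pi\zeta^2\delta_\zeta$ giving $2\zeta^2A(\zeta)$, and the cancellation of the two boundary integrals via reality of $A$ on $\pa\D$. You merely make explicit the conjugation bookkeeping in the boundary cancellation and the (harmless) behavior at the puncture, both of which the paper leaves implicit.
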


\begin{proof}
For $\zeta\in\D,$ the reflected vector field of $v_\zeta$ has the same form.
Indeed, $v_\zeta^\#=v_{\zeta^*}.$
The vector field $v_\zeta$ is meromorphic in $\wh\C$ without poles on $\pa\D$ and its
reflected vector field is holomorphic in $\D.$
We write $W^+(v_\zeta)$ for $W^+\left(v_\zeta;\D\sm\{0\}\right).$
It follows from \eqref{eq: Wplus} that 
$$W^+(v_\zeta)=-\frac1\pi\int_\D A~\bp v_\zeta+\frac 1{2\pi i}\int_{\pa \D} v_\zeta A,\qquad W^+(v_\zeta^{\#})=\frac1{2\pi i}\int_{\pa \D} v_\zeta^{\#}A.$$
Since $A$ is real on the boundary, we have
$$\frac 1{2\pi i}\int_{\pa \D} v_\zeta A=-\overline{ W^+(v_\zeta^{\#})},
\qquad
\frac1\pi\int_\D A~\bp v_\zeta=-W^+(v_\zeta)-\overline{ W^+(v_\zeta^{\#})}.$$
However, $\bar\partial v_\zeta=-2\pi \zeta^2\delta_\zeta.$
Thus
$$W^+(v_\zeta)+\overline{ W^+(v_\zeta^{\#})} = -\frac{1}{\pi} \int_{\mathbb{\D}}A \bar\partial v_\zeta = 2\zeta^2A(\zeta),$$
which completes the proof.
\end{proof}

A quadratic differential $A_{(b)}=A_{(0)}+(ib\pa -j)J_{(0)}$ in the proof of Proposition~\ref{Tb} satisfies the conditions of previous proposition.
This field $A_{(b)}$ is real on the boundary in all standard boundary charts. 
For example, in $(\H,i)$-uniformization, 
$$\varphi =2b\arg(1+z^2),\qquad j = -2ib\frac{z}{1+z^2},$$
and the field $J_{(0)}$ is purely imaginary on the boundary.
The Virasoro field $T_{(b)}$ has a double pole at $q.$ 
This pole structure affects the representation of $T_{(b)}$ in the disc uniformization. 

\ms\begin{lem} \label{Tv}
For a meromorphic vector field $v$ with a zero at $q$ and a Fock space correlation functional $\XX$ in $D\sm\{q\},$
$$\frac1{2\pi i}\,\E\oint_{(q)}vT_{(b)}\,\XX = -\frac{b^2}2v'(q)\E\,\XX.$$
\end{lem}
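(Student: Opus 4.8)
The plan is to work in the $(\D,0)$-uniformization, where $q$ corresponds to $0$ and we may take $w=\id$. The residue operator is chart-local, so it suffices to evaluate $\frac1{2\pi i}\oint_{(0)}v(z)\,\E[T_{(b)}(z)\,\XX]\,dz$ over a small loop around the origin. My first step is to isolate the source of the double pole using \eqref{eq: TA}: in the identity chart of $\D\sm\{0\}$ we have $S_w=0$ and $w'/w=1/z$, so
$$T_{(b)}=A_{(b)}-\frac{b^2}2\frac1{z^2},$$
where $A_{(b)}$ is the Fock-space quadratic differential from the proof of Proposition~\ref{Tb}, with $\E\,A_{(b)}=0$ and only a simple pole at $q$. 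Consequently $\E[T_{(b)}(z)\,\XX]=\E[A_{(b)}(z)\,\XX]-\tfrac{b^2}2 z^{-2}\,\E\,\XX$, and the residue splits into two pieces that I treat separately.

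For the scalar piece I use that $v$ has a zero at $q$: in the disc chart $v(z)=v'(0)\,z+O(z^2)$, so $v(z)/z^2$ has residue $v'(0)$ at the origin, giving $-\tfrac{b^2}2\,v'(0)\,\E\,\XX$. Because $q$ is a fixed point of the flow of $v$, the derivative $v'(q)$ is independent of the chart (it is the multiplier of the linearized flow at $q$), hence $v'(0)=v'(q)$, and this term already produces the claimed right-hand side.

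It remains to show the $A_{(b)}$-piece contributes nothing. The key point is that $\E[A_{(b)}(z)\,\XX]$ has at most a simple pole at $q$: the nodes of $\XX$ lie in $D\sm\{q\}$, so $\XX$ introduces no singularity of the correlation at $q$, while $A_{(b)}$ itself has only a simple pole there. Concretely, in the disc chart $j=-ib/z$ (Remark (b) of Subsection~\ref{ss: c}), so $A_{(b)}=A_{(0)}+ib\,\pa J_{(0)}+\tfrac{ib}{z}J_{(0)}$, whose correlation with $\XX$ equals $g(z)+ib\,f'(z)+\tfrac{ib}{z}f(z)$ with $f(z)=\E[J_{(0)}(z)\,\XX]$ and $g(z)=\E[A_{(0)}(z)\,\XX]$ both holomorphic near $0$. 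Multiplying this simple pole by $v$, which vanishes at $q$, makes $v(z)\,\E[A_{(b)}(z)\,\XX]$ holomorphic in a neighborhood of the origin, so its contour integral vanishes. Combining the two pieces yields the stated identity. I expect the only real bookkeeping—and thus the main obstacle—to be verifying that the singularity of $\E[A_{(b)}(z)\,\XX]$ at $q$ is at worst simple; this reduces entirely to the pole structure of $A_{(b)}$ recorded in Proposition~\ref{Tb} together with the hypothesis that $\XX$ has no node at $q$, after which the computation is immediate.
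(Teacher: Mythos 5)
Your proof is correct and follows essentially the same route as the paper: decompose $T_{(b)}$ via \eqref{eq: TA}, observe that $A_{(b)}$ has only a simple pole at $q$ so that the zero of $v$ kills its contribution, and compute the residue of $-\tfrac{b^2}2 v(z)/z^2$ to get $-\tfrac{b^2}2 v'(q)\,\E\,\XX$. The paper's proof is just a terser version of the same argument; your extra care in checking that $\E[A_{(b)}(z)\,\XX]$ has at most a simple pole (using that $\XX$ has no node at $q$) fills in exactly the step the paper leaves implicit.
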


(Here, $v'(q)$ is a number. In particular, $v'(q)=1$ for the Loewner vector field $v = v_\zeta.$)

\begin{proof}
A quadratic differential $A_{(b)}=A_{(0)}+(ib\pa -j)J_{(0)}$ has a simple pole at $q.$ 
Thus 
$$\frac1{2\pi i}\,\E\oint_{(q)}vT_{(b)}\,\XX = -\frac{b^2}2 \frac1{2\pi i}\,\E\oint_{(q)}\Big(\frac{w'}w\Big)^2v\,\XX= -\frac{b^2}2v'(q)\E\,\XX.$$
\end{proof}

\begin{rmk*}
If $v'(q) = 1,$ then  in the sense of correlations with Fock space correlation functionals in $D\sm\{q\},$
$$W(v;D) = \LL(v,D\sm\{q\}) -b^2I = \LL(v,D)\PP(q),$$
where $\PP(q)$ is the ``puncture operator" defined as a $[-b^2/2,-b^2/2]$-differential with respect to $q$ and $\PP(q) \equiv 1$ in the identity chart of $\D.$ See Remark in Subsection~\ref{ss: main}. 
\end{rmk*}

\ms Later, we allow Fock space correlation functionals to have a node at $q,$ e.g., the evaluations of rooted vertex fields. 
The following proposition is immediate from the previous proposition and lemma.

\ms \begin{prop} \label{represent T} In the sense of correlations with Fock space correlation functionals whose nodes are in $\overline\D\sm\{\zeta,q\},$
\begin{equation} \label{eq: T}
(T_{(b)}\,\|\,\id)(\zeta)=\frac1{2\zeta^2}\left(W_{(b)}^+\left(v_\zeta;\D\right)+ W_{(b)}^-\left(v_{\zeta^*};\D\right)\right).
\end{equation}
\end{prop}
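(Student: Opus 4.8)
The plan is to combine Proposition~\ref{represent A}, applied to the quadratic differential $A = A_{(b)}$ constructed in the proof of Proposition~\ref{Tb}, with the defining relation $W^+(v;\D) = W^+(v;\D\sm\{0\}) + T_v(0)$ and the residue evaluation of Lemma~\ref{Tv}. The only content beyond bookkeeping is the single residue at the puncture $q=0$, where $T_{(b)}$ has a double pole while $A_{(b)}$ has only a simple one.

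First I would record the pointwise relation between $T_{(b)}$ and $A_{(b)}$ in the disc. In the identity chart of $\D\sm\{0\}$ the uniformizing map is $w=\id$, so $S_w\equiv0$ and $w'/w = 1/z$; substituting into \eqref{eq: TA} gives
$$(T_{(b)}\,\|\,\id)(\zeta) = (A_{(b)}\,\|\,\id)(\zeta) - \frac{b^2}{2\zeta^2}.$$
Next I would note that $A_{(b)}$ meets the hypotheses of Proposition~\ref{represent A}: it is a holomorphic quadratic differential in $\D\sm\{0\}$ (established in the proof of Proposition~\ref{Tb}) and is real on the boundary in the standard boundary charts (as verified just after Proposition~\ref{represent A}). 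Hence \eqref{eq: A} applies and expresses $(A_{(b)}\,\|\,\id)(\zeta)$ through $W^+(v_\zeta;\D\sm\{0\})$ and $W^-(v_{\zeta^*};\D\sm\{0\})$.

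It then remains to pass from $\D\sm\{0\}$ to $\D$ in each Ward functional. By definition $W^+(v_\zeta;\D) = W^+(v_\zeta;\D\sm\{0\}) + T_{v_\zeta}(0)$, and since $v_\zeta$ has a simple zero at $0$ with $v_\zeta'(0)=1$, Lemma~\ref{Tv} gives $T_{v_\zeta}(0) = -b^2/2$ in the sense of correlations with functionals whose nodes avoid $\{0,\zeta\}$. The identical computation applies to $v_{\zeta^*}$, for which $v_{\zeta^*}'(0)=1$ as well; and since $b$ is real, the conjugation implicit in $W^-$ leaves this correction unchanged. Adding the two contributions produces an extra $-b^2$, so that
$$W^+(v_\zeta;\D) + W^-(v_{\zeta^*};\D) = W^+(v_\zeta;\D\sm\{0\}) + W^-(v_{\zeta^*};\D\sm\{0\}) - b^2.$$
Dividing by $2\zeta^2$ and inserting the two displays above turns the right-hand side of \eqref{eq: T} into $(A_{(b)}\,\|\,\id)(\zeta) - b^2/(2\zeta^2) = (T_{(b)}\,\|\,\id)(\zeta)$, which is the claim.

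The main obstacle is just the careful accounting of this puncture residue: one must check that the passage from $\D\sm\{0\}$ to $\D$ in both functionals each contributes exactly the $-b^2/2$ dictated by the double pole of $T_{(b)}$ at $q$, and that together they reproduce precisely the discrepancy $-b^2/(2\zeta^2)$ between $T_{(b)}$ and $A_{(b)}$. One should also observe that the class of admissible test functionals must now exclude the node $\zeta$ (the pole of $v_\zeta$) in addition to $q$, since in \eqref{eq: T} the Virasoro field is being reconstructed at $\zeta$; this is exactly why the statement restricts nodes to $\overline\D\sm\{\zeta,q\}$.
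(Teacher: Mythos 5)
Your proposal is correct and follows exactly the route the paper intends: the paper dismisses this proposition as ``immediate from the previous proposition and lemma,'' i.e.\ from Proposition~\ref{represent A} applied to $A_{(b)}$ together with Lemma~\ref{Tv} accounting for the residue corrections $T_{v_\zeta}(0)=T_{v_{\zeta^*}}(0)=-b^2/2$ at the puncture, which is precisely the bookkeeping you carry out. Your identification of the discrepancy $-b^2/(2\zeta^2)$ between $T_{(b)}$ and $A_{(b)}$ in the identity chart (via \eqref{eq: TA} with $S_w=0$ and $w'/w=1/\zeta$) and its matching against the two $-b^2/2$ contributions is exactly right.
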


\ms \subsection{Ward's equations in the disc} \label{ss: Ward's in D}

Ward's identities state that the action of Lie derivative operator $\LL_v^+$ can be represented by the insertion of Ward's functional $W^+(v)$ into the correlations of fields in the OPE family of $\Phi_{(b)}.$
See \eqref{eq: Ward in Dq} or Section 4.5 in \cite{KM11} for more details. 
Combining Ward's identity with the representation of $A$ (Proposition~\ref{represent A}), we derive the useful equation in the disc uniformization. 

\ms Let us express $\LL_{v_\zeta}^+(z)Y(z)$ in terms of the singular part of operator product expansion of $A(\zeta)$ and $Y(z)\,(Y\in\FF_{(b)}).$ 
Suppose that 
$$A(\eta)Y(z)\sim \sum_{j\le-1}{C_j(z)}{(\eta-z)^j}, \qquad (\eta\to z).$$
We write $A*_nY$ for $C_n$ and $\Sing_{\zeta\to z} [A(\zeta)Y(z)]$ for $\sum_{j\le-1}{C_j(z)}{(\zeta-z)^j}.$
Then
$$\LL_{v_\zeta}^+(z)Y(z) = \frac1{2\pi i}\oint_{(z)}v_\zeta(\eta)A(\eta)Y(z)\,d\eta = \sum_{j\le-1}{C_j(z)} \frac1{2\pi i}\oint_{(z)}(\eta-z)^j\,v_\zeta(\eta)\,d\eta.$$
Using 
$$\frac1{2\pi i}\oint_{(z)}(\eta-z)^j\,v_\zeta(\eta)\,d\eta =
\begin{cases}
{2\zeta^2}{(\zeta-z)^j} - (2\zeta + z) &\quad\textrm{if } j=-1;\vspace{.3em}\\
{2\zeta^2}{(\zeta-z)^j} - 1 &\quad\textrm{if } j=-2;\vspace{.3em}\\
{2\zeta^2}{(\zeta-z)^j} &\quad\textrm{if } j\le-3,
\end{cases}
$$
we get
\begin{equation} \label{eq: sing OPE}
\LL_{v_\zeta}^+(z)Y(z) = 2\zeta^2 \Sing_{\zeta\to z} [A(\zeta)Y(z)] - (2\zeta+z)A*_{-1}Y(z) - A*_{-2}Y(z).
\end{equation}

\ss \begin{prop} 
We assume that $A$ satisfies the conditions of Proposition~\ref{represent A}.
Let $Y, X_1,\cdots, X_n\in \FF(W)$ and let $X$ be the tensor product of $X_j$'s.
Then
\begin{align*}
\E\,&Y(z)\LL_{v_z}^+X + \E\LL_{v_{z^*}}^-(Y(z)X)\\
&= 2z^2\,\E[(A*Y)(z)X] +3z\,\E[(A*_{-1}Y)(z)X] + \E[(A*_{-2}Y)(z)X], 
\end{align*}
where all fields are evaluated in the identity chart of $\D\sm\{0\},$ and $\LL_v^\pm = \LL^\pm(v,\D\sm\{0\}).$
\end{prop}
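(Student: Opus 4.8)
The plan is to run Ward's machinery on the correlation $\E[A(\zeta)Y(z)X]$ and then recover the claimed identity by extracting the regular ($(\zeta-z)^0$) part in the limit $\zeta\to z$. First I would substitute the representation of $A$ from Proposition~\ref{represent A}, namely $2\zeta^2A(\zeta)=W^+(v_\zeta;\D\sm\{0\})+W^-(v_{\zeta^*};\D\sm\{0\})$, valid inside correlations against functionals whose nodes avoid $\{0,\zeta\}$. Applying Ward's identity~\eqref{eq: Ward in Dq} to the holomorphic piece $W^+(v_\zeta)$ and its anti-holomorphic counterpart to $W^-(v_{\zeta^*})$, and using that for $\zeta$ near but not equal to $z$ both $v_\zeta$ and $v_{\zeta^*}$ are holomorphic at all the nodes of $Y(z)X$, the Lie derivatives act as derivations over the tensor product. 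This gives
\begin{align*}
2\zeta^2\,\E[A(\zeta)Y(z)X] &= \E[(\LL_{v_\zeta}^+Y)(z)X] + \E[(\LL_{v_{\zeta^*}}^-Y)(z)X] \\
&\quad + \E[Y(z)\LL_{v_\zeta}^+X] + \E[Y(z)\LL_{v_{\zeta^*}}^-X].
\end{align*}

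Next I would isolate the single term that is singular as $\zeta\to z$, namely $\E[(\LL_{v_\zeta}^+Y)(z)X]$, where the pole of $v_\zeta$ collides with the node of $Y$ (whereas $v_{\zeta^*}$ stays holomorphic at $z$ throughout). Here equation~\eqref{eq: sing OPE} rewrites $(\LL_{v_\zeta}^+Y)(z)$ precisely as $2\zeta^2\,\Sing_{\zeta\to z}[A(\zeta)Y(z)] - (2\zeta+z)(A*_{-1}Y)(z) - (A*_{-2}Y)(z)$. Substituting this and moving the $\Sing$ contribution across, the left-hand side becomes $2\zeta^2$ times the correlation of the \emph{regular} part of the operator product expansion $A(\zeta)Y(z)$; by definition of the OPE coefficient $A*Y$, its value at $\zeta=z$ is $\E[(A*Y)(z)X]$, so this side tends to $2z^2\,\E[(A*Y)(z)X]$.

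Finally I would pass to the limit $\zeta\to z$ on the remaining terms. The three terms $\E[(\LL_{v_{z^*}}^-Y)(z)X]$, $\E[Y(z)\LL_{v_z}^+X]$, and $\E[Y(z)\LL_{v_{z^*}}^-X]$ are all regular in this limit, and they reassemble (using the derivation property in reverse on the $v_{z^*}$ pieces) into $\E[Y(z)\LL_{v_z}^+X]+\E[\LL_{v_{z^*}}^-(Y(z)X)]$, which is exactly the left-hand side of the proposition. The coefficient $-(2\zeta+z)$ evaluates to $-3z$ and flips sign upon rearrangement to produce the $3z\,\E[(A*_{-1}Y)(z)X]$ term, while the last term yields $\E[(A*_{-2}Y)(z)X]$. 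Collecting everything gives the stated formula.

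I expect the main obstacle to be the bookkeeping at $\zeta\to z$: the term $\E[(\LL_{v_\zeta}^+Y)(z)X]$ genuinely diverges, and the whole argument hinges on the exact cancellation of its principal part against the singular part of $2\zeta^2\,\E[A(\zeta)Y(z)X]$ provided by~\eqref{eq: sing OPE}, leaving a finite remainder. Care is needed to justify the derivation property of $\LL^\pm$ across the product (legitimate precisely because the poles $\zeta,\zeta^*$ miss the nodes for $\zeta\neq z$), to keep the distinct roles of $v_\zeta$ (singular at $z$) and $v_{\zeta^*}$ (regular at $z$) straight, and to interchange the $\zeta\to z$ limit with the correlation.
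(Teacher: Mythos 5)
Your proposal is correct and follows essentially the same route as the paper's proof: represent $2\zeta^2 A(\zeta)$ via Proposition~\ref{represent A}, apply Ward's identity with Leibniz's rule, cancel the divergence of $\E[(\LL^+_{v_\zeta}Y)(z)X]$ against the singular part of the OPE using \eqref{eq: sing OPE}, and let $\zeta\to z$. The only cosmetic difference is that you expand the $W^-(v_{\zeta^*})$ contribution by Leibniz and then recombine it, whereas the paper keeps $\LL^-_{v_{\zeta^*}}[Y(z)X]$ intact throughout.
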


\begin{proof}
Subtracting the singular part of OPE, it follows from \eqref{eq: sing OPE} that 
$$(A\ast Y)(z)=\lim_{\zeta\to z}\Big[A(\zeta)Y(z)-\frac1{2\zeta^2}\big((\LL^+_{v_\zeta}Y)(z)+(2\zeta+z)(A*_{-1}Y)(z)+(A*_{-2}Y)(z)\big)\Big].$$
We write $W^+(v_\zeta)$ for $W^+\left(v_\zeta;\D\sm\{0\}\right).$ 
By Proposition \ref{represent A} and Leibniz's rule, 
\begin{align*}
\E\,[A(\zeta) Y(z)\,X]&=\frac1{2\zeta^2}\big(\E\,[W^+(v_\zeta) \,Y(z)\,X]+\E\,[W^-(v_{\zeta^*}) \,Y(z)\,X]\big)\\
%&=\frac1{2\zeta^2}\big(\E\,\LL^+_{v_\zeta} [Y(z)X]+ \E\,\LL^-_{v_{\zeta^*}} [Y(z)X] \big)\\
&=\frac1{2\zeta^2}\big(\E\,Y(z) \LL^+_{v_\zeta}X+\E (\LL^+_{v_\zeta} Y)(z)X+ \E\,\LL^-_{v_{\zeta^*}} [Y(z)X]\big).
\end{align*}
Thus
$$\frac1{2\zeta^2}\big(\E\, [Y(z)\,\LL^+_{v_\zeta}\,X]+ \E \,\LL^-_{v_{\zeta^*}}[Y(z)X] - (2\zeta+z)(A*_{-1}Y)(z)-(A*_{-2}Y)(z)\big)$$
tends to $\E\,[(A\ast Y)(z)\,X]$ as $\zeta\to z.$
\end{proof}

The following proposition follows immediately since a quadratic differential $A_{(b)}=A_{(0)}+(ib\pa -j)J_{(0)}$ satisfies the conditions of Proposition~\ref{represent A}.
It is the version of Ward's equation that we will use in the proof of Theorem~\ref{main X}. 

\ms \begin{prop} \label{T-Ward}
Let $Y, X_1,\cdots, X_n\in \FF_{(b)}$ and let $X$ be the tensor product of $X_j$'s.
Then
\begin{align*}
\E\,&Y(z)\LL_{v_z}^+X + \E\,\LL_{v_{z^*}}^-(Y(z)X)\\ 
&=2z^2\,\E[(L_{-2}Y)(z)X] 
+3z\,\E[(L_{-1}Y)(z)X] +\E[(L_0Y)(z)X] + b^2\,\E[Y(z)X] , 
\end{align*}
where all fields are evaluated in the identity chart of $\D\sm\{0\},$ and $\LL_v^\pm = \LL^\pm(v,\D\sm\{0\}).$
\end{prop}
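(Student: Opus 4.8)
The plan is to reduce the claim to the general Ward equation of the preceding proposition by taking the quadratic differential there to be $A\equiv A_{(b)}=A_{(0)}+(ib\pa-j)J_{(0)}$ and then re-expressing its OPE coefficients through the Virasoro field $T\equiv T_{(b)}$. Since $A_{(b)}$ satisfies the hypotheses of Proposition~\ref{represent A} (it is a holomorphic quadratic differential in $\D\sm\{0\}$ that is real on the boundary in every standard boundary chart), that proposition applies verbatim and gives, in the identity chart of $\D\sm\{0\}$,
$$\E\,Y(z)\LL_{v_z}^+X + \E\,\LL_{v_{z^*}}^-(Y(z)X) = 2z^2\,\E[(A*Y)(z)X] + 3z\,\E[(A*_{-1}Y)(z)X] + \E[(A*_{-2}Y)(z)X].$$
It remains only to convert the three coefficients $A*Y,\ A*_{-1}Y,\ A*_{-2}Y$ into $L_{-2}Y,\ L_{-1}Y,\ L_0Y$.

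First I would record the scalar relation between $A$ and $T$ in the disc uniformization. In the identity chart one has $w=\id$, so $S_w=0$ and $w'/w=1/z$; hence \eqref{eq: TA2} collapses to $A(\zeta)=T(\zeta)+\tfrac{b^2}{2\zeta^2}$, the discrepancy being the \emph{non-random} field $\tfrac{b^2}{2\zeta^2}\cdot 1$. Multiplying by $Y(z)$ gives
$$A(\zeta)Y(z) = T(\zeta)Y(z) + \frac{b^2}{2\zeta^2}\,Y(z).$$
Because $z\neq0$, the prefactor $\tfrac{b^2}{2\zeta^2}$ is holomorphic at $\zeta=z$, so it contributes nothing to the singular part of the expansion and merely adds its value $\tfrac{b^2}{2z^2}$ to the zeroth coefficient of the regular part. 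Using the standard identification of the Virasoro modes, $T*Y=L_{-2}Y$, $T*_{-1}Y=L_{-1}Y$, and $T*_{-2}Y=L_0Y$, this yields
$$A*Y = L_{-2}Y + \frac{b^2}{2z^2}\,Y, \qquad A*_{-1}Y = L_{-1}Y, \qquad A*_{-2}Y = L_0Y.$$

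Substituting these into the displayed Ward equation, the factor $2z^2$ in front of $A*Y$ converts the extra term $\tfrac{b^2}{2z^2}Y$ into $b^2Y$, producing precisely the summand $b^2\,\E[Y(z)X]$, while the three principal terms become $2z^2\,\E[(L_{-2}Y)(z)X]$, $3z\,\E[(L_{-1}Y)(z)X]$, and $\E[(L_0Y)(z)X]$. This is the asserted identity. The only point requiring care is the bookkeeping in the previous paragraph: one must be sure that the non-random correction $\tfrac{b^2}{2\zeta^2}$, holomorphic at $z$, perturbs solely the OPE-product coefficient $A*Y$ and leaves the two singular coefficients $A*_{-1}Y,\ A*_{-2}Y$ equal to $L_{-1}Y,\ L_0Y$; granting that, the cancellation of $2z^2$ against $\tfrac{1}{2z^2}$ to give $b^2$ is immediate.
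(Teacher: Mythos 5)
Your proof is correct and follows exactly the route the paper intends: the paper derives Proposition~\ref{T-Ward} by applying the preceding Ward-equation proposition to $A_{(b)}$ (which it notes satisfies the hypotheses of Proposition~\ref{represent A}) and then converting $A*_nY$ into the Virasoro modes via the identity-chart relation $A(\zeta)=T(\zeta)+\tfrac{b^2}{2\zeta^2}$ from \eqref{eq: TA2}. Your careful bookkeeping of how the non-random correction $\tfrac{b^2}{2\zeta^2}$ perturbs only the zeroth regular coefficient, producing the $b^2\,\E[Y(z)X]$ term, is precisely the step the paper leaves implicit.
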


Here, $L_n$'s are the modes of the Virasoro field $T\equiv T_{(b)}:$ 
$$L_n(z):=\frac1{2\pi i}\oint_{(z)}(\zeta-z)^{n+1} \,T(\zeta)~d\zeta.$$

\ms\section{(Multi-)Vertex fields} \label{sec: O}
From Subsection~\ref{ss: VV} to~\ref{ss: V}, we briefly discuss some basic properties of non-chiral vertex fields, chiral bosonic fields, and chiral vertex fields. 
As we mentioned in the introduction, we expand the OPE family of $\Phi_{(b)}$ in such way that it contains the rooted multi-vertex fields with the neutrality condition. 
This extended OPE family forms a subcollection of Ward's family since the rooted multi-vertex fields satisfy Ward's OPEs. 
In Subsection~\ref{ss: 1-leg fields} we introduce the so-called one-leg operator as a special form of rooted vertex field. 
Correlation functions of fields in the image of extended OPE family under the insertion of one-leg operators form a collection of SLE martingale-observables. 
See the next two sections.

\ms\subsection{Non-chiral vertex fields} \label{ss: VV}
We define (non-chiral) \emph{vertex fields} in $\FF_{(b)}$ as OPE-exponentials of the bosonic field $\Phi\equiv\Phi_{(b)}.$ 
More precisely, for $\alpha\in\C$, we have 
$$\VV^\alpha\equiv \VV^\alpha_{(b)}=e^{*\alpha\Phi}=\sum_{n=0}^\infty\frac{\alpha^n}{n!}\Phi^{*n}.$$
To apply Ward's equation to a differential, it is sufficient (and necessary) to verify Ward's OPE.
(See Proposition~5.5 in \cite{KM11}.)
We call a differential \emph{primary} if it satisfies Ward's OPE (or Ward's equation). 
\begin{prop}
A vertex field $\VV^\alpha_{(b)}$ is primary with conformal dimensions
$$\lambda=-\alpha^2/2+i\alpha b,\qquad \lambda_*=-\alpha^2/2-i\alpha b.$$
\end{prop}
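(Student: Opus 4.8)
The plan is to verify Ward's OPE for the vertex field $\VV^\alpha_{(b)} = e^{*\alpha\Phi}$, since by Proposition~5.5 in \cite{KM11} this is equivalent to the field being a primary differential with the stated conformal dimensions. Concretely, I must establish that as $\zeta\to z$,
\begin{equation*}
A(\zeta)\VV^\alpha(z)\sim \lambda\frac{\VV^\alpha(z)}{(\zeta-z)^2}+\frac{\pa_z\VV^\alpha(z)}{\zeta-z},
\end{equation*}
together with the analogous expansion for the anti-holomorphic part (which yields $\lambda_*$), where $A\equiv A_{(b)}=A_{(0)}+(ib\pa-j)J_{(0)}$ is the quadratic differential from Proposition~\ref{Tb}. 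Matching the coefficient of $(\zeta-z)^{-2}$ gives the dimension $\lambda$, and the coefficient of $(\zeta-z)^{-1}$ should reproduce $\pa_z\VV^\alpha$, which is the primary-field condition.

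First I would reduce to computing the singular part of the operator product $A(\zeta)\VV^\alpha(z)$. The key technical input is the basic OPE between the current $J=J_{(b)}$ and the Wick exponential: since $\Phi$ is (the real part of) a pre-pre-Schwarzian form with $\E[J_{(0)}(\zeta)\Phi_{(0)}(z)]\sim -1/(\zeta-z)$ and $J=J_{(0)}+j$, one has the contraction
\begin{equation*}
J(\zeta)\,e^{*\alpha\Phi}(z)\sim\Big(\frac{-\alpha}{\zeta-z}+\alpha\, j(z)\Big)e^{*\alpha\Phi}(z)+\cdots,
\end{equation*}
from which the Virasoro field $T=-\tfrac12 J*J+ib\pa J$ acts on $\VV^\alpha$ with the expected leading double pole. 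Rather than recompute everything from scratch, I would exploit the structure $T=A+\tfrac{c}{12}S_w-\tfrac{b^2}{2}(w'/w)^2$ from \eqref{eq: TA}: since the added terms are non-random and regular in $\zeta$ near $z\ne q$, the singular part of $A(\zeta)\VV^\alpha(z)$ coincides with that of $T(\zeta)\VV^\alpha(z)$, so it suffices to extract the OPE $T(\zeta)\VV^\alpha(z)$ and read off the dimensions.

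The main computation is then the standard bosonic vertex-operator OPE. Writing $\Phi=\Phi_{(0)}+\varphi$ and using Wick's theorem, the self-contractions of the two $J$'s in $-\tfrac12 J*J$ against the $\alpha\Phi$'s in the exponential produce the double pole with coefficient $-\alpha^2/2$, while the $ib\pa J$ term contributes $+i\alpha b$; together these give $\lambda=-\alpha^2/2+i\alpha b$. The coefficient of the simple pole must be checked to equal $\pa_z\VV^\alpha$ exactly, which is where the drift term $j(z)$ and the derivative of the deterministic factor $\varphi$ conspire to cancel into a clean total derivative — this is the one place demanding care. The anti-holomorphic computation is identical with $b\mapsto -b$ in the relevant contraction (reflecting that $\bar A$ pairs via the conjugate current), giving $\lambda_*=-\alpha^2/2-i\alpha b$.

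The hard part will be bookkeeping the non-random drift contributions from $\varphi$ and $j$ so that the simple-pole coefficient assembles precisely into $\pa_z\VV^\alpha$ with no spurious terms; the double-pole coefficient is comparatively immediate once the $J$--$\VV^\alpha$ contraction is in hand. Since the computation is formally identical to the chordal case treated in \cite{KM11} — the only $b$-dependence entering through $j$ and $\varphi$, which affect the regular (drift) part but not the singular structure governing the dimensions — I expect this to follow "as in the chordal case," so the proof can be kept short by invoking that parallel and recording only the modified contraction responsible for the $\pm i\alpha b$ terms.
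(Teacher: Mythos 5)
Your route is genuinely different from the paper's, and it is heavier than it needs to be while skipping the one step that actually requires work in this framework. The paper's proof does not verify Ward's OPE by hand at all: the non-chiral vertex field is by definition in the OPE family $\FF_{(b)}$, which is a subcollection of Ward's family $\FF(A_{(b)},\overline{A_{(b)}})$ (Proposition~4.8 in \cite{KM11}), so Ward's OPE comes for free. What the paper does compute is the transformation law: it writes $\VV^\alpha_{(b)}=e^{\alpha\varphi}C^{\alpha^2}e^{\odot\alpha\Phi_{(0)}}$, observes that the conformal radius $C$ is a $[-1/2,-1/2]$-differential (contributing $-\alpha^2/2$ to each dimension), that $e^{\alpha\varphi}$ with $\varphi$ an imaginary part of a pre-pre-Schwarzian form of order $ib$ contributes $(h')^{i\alpha b}(\overline{h'})^{-i\alpha b}$, and that the Wick exponential is a scalar. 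That three-line factorization is the entire content of the proof.

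The gap in your proposal is that you never establish that $\VV^\alpha_{(b)}$ is a differential. In this paper "conformal dimensions" are defined by the transformation law $f=(h')^\lambda(\overline{h'})^{\lambda_*}\tilde f\circ h$, and "primary" means a \emph{differential} satisfying Ward's OPE; Proposition~5.5 of \cite{KM11}, which you invoke, is an equivalence for fields already known to be differentials. Reading $\lambda$ off the coefficient of $(\zeta-z)^{-2}$ in $A(\zeta)\VV^\alpha(z)$ identifies $\lambda$ with the conformal dimension only once the differential property is in hand — otherwise it is just the $L_0$-eigenvalue of a Fock space field. So your Wick computation, while correct as far as it goes (the double and single contractions of $-\tfrac12 J*J$ give $-\alpha^2/2$, the $ib\pa J$ term gives $+i\alpha b$, and the conjugate current gives $\lambda_*$), reproves the half that is free by citation and leaves unproved the half the proposition is really about. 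The repair is short: insert the explicit factorization through $e^{\alpha\varphi}$ and $C^{\alpha^2}$ and read off the transformation law, which is exactly the paper's argument.
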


\begin{proof}
As in the chordal case, we have
$$\VV^\alpha_{(b)}=e^{\alpha\varphi}\VV^\alpha_{(0)}=e^{\alpha\varphi}C^{\alpha^2}e^{\odot\alpha\Phi_{(0)}},\qquad(\varphi:=\E\,\Phi=-2b\arg w'/w).$$
The non-random field $\varphi$ is an imaginary part of a pre-pre-Schwarzian form and $C$ is the conformal radius. 
In terms of a conformal map $w:(D,q)\to (\D,0),$ we have
$$(C\,\|\,\id_D)(z) = \frac{1-|w(z)|^2}{|w'(z)|},$$ 
where $\id_D$ is the identity chart of $D.$ 
Thus the conformal radius is a $[-1/2,-1/2]$-differential and $\VV^\alpha_{(b)}$ is a $[\lambda,\lambda_*]$-differential.
By definition, the non-chiral vertex fields belong to the OPE family $\FF_{(b)}$ of $\Phi_{(b)}.$
It is a subcollection of Ward's family $\FF(A_{(b)},\overline{A_{(b)}}).$ (See Proposition~4.8 in \cite{KM11}.) 
\end{proof}

\ms While the expression for non-chiral vertex fields in the upper half-plane does not depend on $b$ in the chordal case, the expression for $\VV^\alpha_{(b)}$ in $(\D,0)$-uniformization depends on $b$ in the radial case.
For example, the 1-point function is
$$\E\VV^{\alpha}=(1-|z|^2)^{\alpha^2}\left(\frac{\bar z}{z}\right)^{i\alpha b},$$
and the 2-point function is
\begin{equation} \label{eq: 2ptfcn4V}
\E~\VV^\alpha(z_1)\VV^\alpha(z_2)=
((1-|z_1|^2)(1-|z_2|^2))^{\alpha^2}
\left|\frac{1-z_1\bar z_2}{z_1- z_2}\right|^{2\alpha^2}
\left(\frac{\bar z_1}{z_1}\frac{\bar z_2}{z_2}\right)^{i\alpha b}.
\end{equation}
On the other hand, the conformal properties of the vertex fields, as well as their Virasoro fields $T,$ depend on the central charge.

\ms\subsection{Chiral bosonic fields} \label{ss: Phi+}
We define a \emph{bi-variant} field $\Phiplus$ by 
$$\Phiplus_{(b)}(z,z_0)\Big(=\big\{\Phiplus_{(b)}(\gamma)=\int_\gamma J_{(b)}(\zeta)\,d\zeta\big\}\Big)=\Phiplus_{(0)}(z,z_0)+ib\log \frac{w'}{w}\Big|^{z}_{z_0},$$
where $\gamma$ is a curve from $z_0$ to $z$ and $w$ is any conformal map $(D,q)\to (\D,0).$
Then the values of $\Phiplus_{(b)}$ are multivalued functionals. 
It is natural to add the chiral bosonic field $\Phiplus$ to the OPE family $\FF_{(b)}$ of $\Phi_{(b)}$ since the following Ward's OPE holds in the both variables:
\begin{equation}\label{eq: OPE4Phi+}
T(\zeta)\Phiplus(z, z_0)\sim \frac{ib}{(\zeta-z)^2}+\frac{J(z)}{\zeta-z}, \qquad \zeta\to z,
\end{equation}
where $T\equiv T_{(b)}$ and $\Phiplus\equiv\Phiplus_{(b)}.$
(Similar statement holds for $z_0.$) 
See \cite{KM11} for the meaning of \eqref{eq: OPE4Phi+}.
To show Ward's OPE \eqref{eq: OPE4Phi+} for $\Phiplus$ we just need to integrate 
Ward's OPE for $J,$
$$T(\zeta)J(\eta)\sim \frac{2ib}{(\zeta-\eta)^3}+\frac{J(\zeta)}{(\zeta-\eta)^2},$$
with respect to $\eta.$ 
\begin{eg*}
We have
\begin{equation} \label{eq: G^+}
\E[\Phiplus_{(0)}(z, z_0)\Phi_{(0)}(z_1)]=2(\Gchiral(z,z_1)-\Gchiral(z_0,z_1)),
\end{equation}
where $\Gchiral$ is the \emph{complex} Green's function,
$$2\Gchiral(z,z_1)=G(z,z_1)+i\wt G(z,z_1).$$
Here $\wt G$ is the harmonic conjugate of the Green's function.
In terms of a conformal map $w:(D,q)\to (\D,0),$ we have
$$\Gchiral(z,z_1)=\frac12\log\frac{1-w(z)\overline{w(z_1)}}{w(z)-w(z_1)}.$$
\end{eg*}

\ms \subsection{Chiral bi-vertex fields} \label{ss: V}
As in \cite{KM11} (see Section 7.2) we define chiral bi-vertex fields as the normalized exponentials of the chiral bosonic fields. 
They can be interpreted as the OPE exponentials of $\Phiplus.$
Due to the difficulties with the definition of OPE multiplications of chiral fields, we just state the definition of chiral vertex fields and then verify that these multivalued fields are $\Aut(D,q)$-invariant holomorphic primary fields.

\ms For $z\ne z_0\in D\sm\{q\}$ and $\alpha\in\C$, we define \emph{chiral bi-vertex fields} $V^\alpha(z,z_0)$ by 
\begin{align*}
V^\alpha(z,z_0)&\equiv V_{(b)}^\alpha(z,z_0)=\{V_{(b)}^\alpha(\gamma)\}=\left(\frac{w'(z)w'(z_0)}{(w(z)-w(z_0))^2}\right)^{-\alpha^2/2} e^{\odot \alpha\Phiplus_{(b)}(z,z_0)}\\&=
\left(\frac{w'(z)w'(z_0)}{(w(z)-w(z_0))^2}\right)^{-\alpha^2/2}~\left(\frac{w'(z)}{w'(z_0)}\right)^{i\alpha b}~\left(\frac{w(z_0)}{w(z)}\right)^{i\alpha b}~e^{\odot \alpha\Phiplus_{(0)}(z,z_0)},
\end{align*}
where $w$ is any conformal map $(D,q)\to (\D,0)$ and $\gamma$ is a curve from $z_0$ to $z.$
This expression arises from calculation of the 1-point functions and the interaction term in the multi-vertex fields $\OO^{(-i\alpha,i\alpha)}(z,z_0).$ 
See the next subsection. 

\ms The field $V_{(b)}^\alpha(z,z_0)$ is an $\Aut(D,q)$-invariant holomorphic differential of conformal dimension
$$\lambda=-\frac{\alpha^2}2+i\alpha b \qquad (\lambda_0=-\frac{\alpha^2}2-i\alpha b)$$
with respect to $z$ (with respect to $z_0,$ respectively).
Proposition~\ref{TO} tells us that Ward's OPEs hold for the multi-vertex fields (with the neutrality condition). 
In particular, Ward's OPEs hold for $V_{(b)}^\alpha(z,z_0).$
Thus it is natural to add $V_{{(b)}}^\alpha$ to $\FF_{(b)}.$
See \cite{KM11} for the meaning of global Ward's identities involving chiral vertex fields.
Ward's equations also hold for chiral bi-vertex fields with similar interpretation.

\ms\subsection{Multi-vertex fields} \label{ss: O} 
In this subsection we introduce the \emph{formal (1-point) fields} $\Phiplus_{(0)}$ and $\Phiminus_{(0)}$ (with $b=0$). 
As 1-point fields, they are not Fock space fields. 
However, the multi-vertex fields described in terms of $\Phiplus_{(0)}$ and $\Phiminus_{(0)}$ are Fock space fields under the neutrality condition. 

\ms By definition, $\Phiplus_{(0)}$ and $\Phiminus_{(0)}$ satisfy the following formal rules:
$$\Phiplus_{(0)}(z,z_0) = \Phiplus_{(0)}(z)-\Phiplus_{(0)}(z_0),\qquad \Phi_{(0)}(z) = \Phiplus_{(0)}(z) + \Phiminus_{(0)}(z), \qquad \Phiminus_{(0)} = \overline{\Phiplus_{(0)}}$$
with formal correlations
\begin{equation}\label{eq: formal E}
\E[\Phiplus_{(0)}(z)\Phiplus_{(0)}(z_0)] = \log\frac1{w-w_0},\qquad \E[\Phiplus_{(0)}(z)\Phiminus_{(0)}(z_0)] = \log(1-w\bar w_0),
\end{equation}
where $w = w(z), w_0 = w(z_0).$
We define the formal dual boson $\wt\Phi_{(0)}$ by 
$$\wt\Phi_{(0)} = -i(\Phiplus_{(0)} - \Phiminus_{(0)})$$
so that $2\Phiplus_{(0)} = \Phi_{(0)} + i\wt\Phi_{(0)}, 2\Phiminus_{(0)} = \Phi_{(0)} - i\wt\Phi_{(0)},$ and $\wt\Phi_{(0)}(z,z_0) = \wt\Phi_{(0)}(z)-\wt\Phi_{(0)}(z_0).$
We also write $\Phi_{(0)}(z,z_0) = \Phi_{(0)}(z)-\Phi_{(0)}(z_0).$ 

\ms We define the \emph{multi-vertex field} $\OO^{(\bfs\sigma,\bfs\sigma_*)}$ by 
\begin{equation} \label{eq: OO}
\OO^{(\bfs\sigma,\bfs\sigma_*)}(\bfs z)
=\prod_j M^{(\sigma_j,\sigma_{j*})}(z_j)\, \prod_{j<k} I_{j,k}(z_j,z_k) \,
e^{\odot i\sum \sigma_j\Phiplus_{(0)}(z_j) - \sigma_{j*}\Phiminus_{(0)}(z_j)},
\end{equation}
where $M^{(\sigma_j,\sigma_{j*})} 
=(w_j')^{h_j }(\overline{w_j'})^{h_{j*}}w_j^{\mu_j}\bar w_j^{\mu_{j*}}(1-|w_j|^2)^{\sigma_j\sigma_{j*}}$ 
with the dimensions and exponents
\begin{equation} \label{eq: OOdim}
h_j = \sigma_j^2/2-b\sigma_j, \quad 
h_{j*} = \sigma_{j*}^2/2-b\sigma_{j*}, \quad 
\mu_j = b\sigma_j, \quad 
\mu_{j*} = b\sigma_{j*}.
\end{equation}
The interaction terms $I_{j,k}$ are given by 
\begin{equation} \label{eq: Ijk}
I_{jk}(z_j,z_k) = (w_j-w_k)^{\sigma_j\sigma_k}(\bar w_j-\bar w_k)^{\sigma_{j*}\sigma_{k*}} (1-w_j\bar w_k)^{\sigma_j\sigma_{k*}} (1-\bar w_j w_k)^{\sigma_{j*}\sigma_k}.
\end{equation}
We will explain this formula later. 
The formal field $\OO^{(\bfs\sigma,\bfs\sigma_*)}$ is a (multivalued) Fock space field if and only if the neutrality condition
$$\sum (\sigma_j +\sigma_{j*}) = 0$$
holds.
Indeed, the formal field $\sum \sigma_j\Phiplus_{(0)}(z_j) - \sigma_{j*}\Phiminus_{(0)}(z_j)$ can be decomposed into a Fock space part
$$\sum_j \sigma_j \Phi_{(0)}(z_1) + \sum_{j>1} \sigma_j\Phiplus_{(0)}(z_j,z_1) - \sigma_{j*}\Phiminus_{(0)}(z_j,z_1)$$ 
and a formal part 
$$-\sum (\sigma_j +\sigma_{j*}) \Phiminus_{(0)}(z_1).$$ 
We will not study the sophisticated monodromy structure of Wick's part 
$$e^{\odot i\sum \sigma_j\Phiplus_{(0)}(z_j) - \sigma_{j*}\Phiminus_{(0)}(z_j)}$$ 
of multi-vertex fields in this paper.

\ms\subsubsec{Formal vertex fields} 
To explain the definition of mixed vertex fields, we introduce formal vertex fields. 
We write $\OO^{(\sigma)}(z,z_0)$ for the bi-vertex field $V^{i\sigma}(z,z_0).$ 
We formally define 
$$\OO^{(\sigma)} = (w')^h w^\mu \,e^{\odot i\sigma\Phiplus_{(0)}},$$
where $h = \sigma^2/2-b\sigma$ and $\mu = b\sigma.$
Then the bi-vertex fields can be expressed in terms of the formal vertex fields:
$$
\OO^{(\sigma)}(z,z_0) =\OO^{(\sigma)}(z)\OO^{(-\sigma)}(z_0) 
= M^{(\sigma)}(z)M^{(-\sigma)}(z_0) I^{(\sigma)}(z,z_0) \,e^{\odot i\sigma\Phiplus_{(0)}(z,z_0)},
$$
where $M^{(\sigma)}(z) := \E\,\OO^{(\sigma)}(z)$ and the interaction terms $I^{(\sigma)}(z,z_0)$ are given by
$$I^{(\sigma)}(z,z_0) :=\,e^{\sigma^2\E[\Phiplus_{(0)}(z)\Phiplus_{(0)}(z_0)]}=(w-w_0)^{-\sigma^2}.$$

\ms\subsubsec{1-point vertex fields}
We formally define the basic (1-point) fields $\OO^{(\sigma,\sigma_*)}$ by
$$\OO^{(\sigma,\sigma_*)}=\OO^{(\sigma)}\overline{\OO^{(\bar\sigma_*)}}= M^{(\sigma)}\overline{M^{(\bar\sigma_*)}} I^{(\sigma,\sigma_*)} 
\,e^{\odot (i\sigma\Phiplus_{(0)}-i\sigma_*\Phiminus_{(0)})},$$
where the interaction terms $I^{(\sigma,\sigma_*)}$ are given by
$$I^{(\sigma,\sigma_*)} (z):= e^{\sigma\sigma_*\E[\Phiplus_{(0)}(z)\Phiminus_{(0)}(z)]} = (1-|w|^2)^{\sigma\sigma_*}.$$
Then their 1-point functions $M^{(\sigma,\sigma_*)}:= \E\,\OO^{(\sigma,\sigma_*)} $ are 
$$
M^{(\sigma,\sigma_*)}(z) 
=(w')^h (\overline{w'})^{h_*}w^\mu \bar w^{\mu_*}(1-|w|^2)^{\sigma\sigma_*},
$$
where the exponents are given by 
$$
h = \sigma^2/2-b\sigma, \quad 
h_* = \sigma_*^2/2-b\sigma_*, \quad 
\mu = b\sigma, \quad 
\mu_* = b\sigma_*.
$$

\ms\subsubsec{Multiplication} 
As we see in Subsection~\ref{ss: main} we can view $\bfs\sigma$ (and $\bfs\sigma_*$) as a map $\bfs\sigma: D\sm\{q\} \to \R$ which takes the value $0$ at all points except for finitely many points. 
The tensor product of two multi-vertex fields is a multi-vertex field. 
Indeed, 
$$\OO^{(\bfs\sigma_1,\bfs\sigma_{1*})} \OO^{(\bfs\sigma_2,\bfs\sigma_{2*})} = \OO^{(\bfs\sigma,\bfs\sigma_*)},$$
where $\bfs\sigma = \bfs\sigma_1 + \bfs\sigma_2$ and $\bfs\sigma_*=\bfs\sigma_{1*}+\bfs\sigma_{2*}.$
As a binary operation, multiplication of basic formal fields is commutative and associative, up to monodromy.
Multi-vertex fields are interpreted as product of basic 1-point fields, i.e.,
$$
\OO^{(\bfs\sigma,\bfs\sigma_*)}(\bfs z)\equiv\OO^{(\sigma_1,\sigma_{1*})}(z_1)\, \OO^{(\sigma_2,\sigma_{2*})}(z_2) \cdots \OO^{(\sigma_n,\sigma_{n*})}(z_n)$$
and the interaction terms $I_{j,k}$ (see \eqref{eq: Ijk}) are 
$$I_{j,k}(z_j,z_k) = e^{-\E\,
(\sigma_j\Phiplus_{(0)}(z_j) - \sigma_{j*}\Phiminus_{(0)}(z_j))\,
(\sigma_k\Phiplus_{(0)}(z_k) - \sigma_{k*}\Phiminus_{(0)}(z_k))}.$$
Moreover, the basic 1-point vertex field can be viewed as product of formal vertex fields as follows:
$$\OO^{(\sigma,\sigma_*)} = \OO^{(\sigma,0)} \OO^{(0,\sigma_*)}.$$
Multiplication may have different meanings. 
Suppose $\sigma+\sigma_* = \sigma'+\sigma_*'=0.$ 
The product in 
$$\OO^{(\sigma+\sigma',\sigma_*+\sigma_*')}(z) =\OO^{(\sigma,\sigma_*)}(z)\OO^{(\sigma',\sigma_*')}(z)$$
is an OPE product while the product in 
$$\OO^{(\sigma+\sigma',\sigma_*+\sigma_*')}(z,z') =\OO^{(\sigma,\sigma_*)}(z)\OO^{(\sigma',\sigma_*')}(z'),\qquad (z\ne z')$$
is a tensor product. 

\ms It is natural to expect that Ward's OPEs hold for the multi-vertex fields $\OO^{(\bfs\sigma,\bfs\sigma_*)}(\bfs z)$ since they can be viewed as the OPE exponentials of $i\sum \sigma_j\Phiplus_{(b)}(z_j) - \sigma_{j*}\Phiminus_{(b)}(z_j),$ $(\Phiplus_{(b)} = \Phiplus_{(0)}+ib\log w'/w, \Phiminus_{(b)} = \overline{\Phiplus_{(b)}}).$ 
As we mentioned in Section~\ref{ss: Radial CFT} we now add the multi-vertex fields $\OO^{(\bfs\sigma,\bfs\sigma_*)}$ to the OPE family $\FF_{(b)}$ of $\Phi_{(b)}.$

\ms \begin{prop} \label{TO}
As $\zeta\to z_j,$
$$T(\zeta)\OO(\bfs z)\sim h_{j\phantom{*}}\frac{\OO(\bfs z)}{(\zeta-z_j)^2} + \frac{\pa_{z_j}\OO(\bfs z)}{\zeta-z_j},\qquad
T(\zeta)\bar\OO(\bfs z)\sim \bar h_{j*}\frac{\bar\OO(\bfs z)}{(\zeta-z_j)^2} + \frac{\pa_{z_j}\bar\OO(\bfs z)}{\zeta - z_j},
$$
where $T \equiv T_{(b)}$ and $\OO\equiv\OO^{(\bfs\sigma,\bfs\sigma_*)}.$
\end{prop}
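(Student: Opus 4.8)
The plan is to reduce everything to the current--vertex OPE and then assemble the two pieces of $T\equiv T_{(b)}=-\frac12 J*J+ib\pa J$ from \eqref{eq: Tb}, where $J\equiv J_{(b)}=\pa\Phi_{(b)}=J_{(0)}+j$ with the non-random drift $j=ib(w''/w'-w'/w)$ as in \eqref{eq: Jb}. By complex conjugation it suffices to prove the first (holomorphic) relation; the statement for $\bar\OO$ follows verbatim with $\Phiminus_{(0)}$ and $\sigma_{j*}$ replacing $\Phiplus_{(0)}$ and $\sigma_j$. Throughout I would exploit locality: as $\zeta\to z_j\in D\sm\{q\}$, every factor of $\OO\equiv\OO^{(\bfs\sigma,\bfs\sigma_*)}$ not involving $z_j$ (the prefactors $M^{(\sigma_k,\sigma_{k*})}(z_k)$, the interaction terms $I_{k,l}$ with $k,l\ne j$), the drift $j$, and the Schwarzian-form difference $T-A$ are all holomorphic near $z_j$ and so contribute only to the regular part of the OPE; in particular the poles of $T$ and of $j$ at $q$ never enter.

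Writing $\OO=M\,e^{\odot Q}$ with $Q=i\sum_k\bigl(\sigma_k\Phiplus_{(0)}(z_k)-\sigma_{k*}\Phiminus_{(0)}(z_k)\bigr)$ and $M$ the non-random prefactor of \eqref{eq: OO}, I would compute $J(\zeta)\OO$ by Wick's calculus. Since $J_{(0)}=\pa\Phiplus_{(0)}$, the formal correlations \eqref{eq: formal E} give $\E[J_{(0)}(\zeta)\Phiplus_{(0)}(z_j)]=\pa_\zeta\log\frac1{w(\zeta)-w(z_j)}\sim-(\zeta-z_j)^{-1}$, while $\E[J_{(0)}(\zeta)\Phiminus_{(0)}(z_j)]$ and the contractions with the nodes $z_k$, $k\ne j$, are regular at $z_j$. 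As $j$ is also regular there, the only singular contraction is with $i\sigma_j\Phiplus_{(0)}(z_j)$, so
\[
J(\zeta)\OO(\bfs z)\sim\frac{-i\sigma_j}{\zeta-z_j}\,\OO(\bfs z)+\text{(regular)},
\]
and I would expand the regular part to first order, recording its constant term (the OPE product $J*\OO$) together with the logarithmic derivatives $\pa_{z_j}\log M$ and $\pa_{z_j}\log I_{j,k}$.

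Feeding this into $T=-\frac12 J*J+ib\pa J$ then isolates the double pole immediately: the term $ib\,\pa J(\zeta)\OO$ contributes $ib\,\pa_\zeta[-i\sigma_j(\zeta-z_j)^{-1}]\OO=-b\sigma_j(\zeta-z_j)^{-2}\OO$, and the double contraction of $-\frac12 J*J$ against the vertex contributes $-\frac12(-i\sigma_j)^2(\zeta-z_j)^{-2}\OO=\tfrac12\sigma_j^2(\zeta-z_j)^{-2}\OO$. Their sum is $\bigl(\tfrac12\sigma_j^2-b\sigma_j\bigr)(\zeta-z_j)^{-2}\OO=h_j(\zeta-z_j)^{-2}\OO$, exactly the dimension in \eqref{eq: OOdim}. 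The antiholomorphic double pole for $\bar\OO$ comes out to $\bar h_{j*}$ by the conjugate computation.

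The main obstacle is the simple pole: one must show that the order-one data of the current OPE, once run through $-\frac12 J*J+ib\pa J$, reassemble into precisely $\pa_{z_j}\OO$. Differentiating $\OO=M\,e^{\odot Q}$ and using $\pa_{z_j}\Phiplus_{(0)}(z_j)=J_{(0)}(z_j)$ and $\pa_{z_j}\Phiminus_{(0)}(z_j)=0$ gives $\pa_{z_j}\OO=(\pa_{z_j}\log M)\,\OO+i\sigma_j\,M\,\bigl(J_{(0)}(z_j)\odot e^{\odot Q}\bigr)$, so the task is to verify that the constant term of the regular part of $J(\zeta)\OO$, the cross terms produced by the double contraction in $J*J$, and the contribution of the drift $j$ in $J=J_{(0)}+j$ combine with the logarithmic derivatives of $M^{(\sigma_j,\sigma_{j*})}$ and of the $I_{j,k}$ to reproduce this expression. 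This is exactly the assertion that $\OO$ is a holomorphic $[\bfs h,\bfs h_*]$-differential, so that once the double-pole coefficient is $h_j$ the simple pole is forced to equal $\pa_{z_j}\OO$; the bookkeeping runs parallel to the chordal computation in \cite{KM11}, the only radial novelties being the $w'/w$ factors in $M^{(\sigma_j,\sigma_{j*})}$ and the $w'/w$ term in $j$, which I would track explicitly. Finally, since $T-A$ is a non-random Schwarzian form regular at $z_j$, the singular part of $(T-A)(\zeta)\OO$ vanishes there, so the relations established for $A$ yield the stated OPE for $T$.
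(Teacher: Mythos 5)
Your strategy is the paper's: work in the disc chart, split $T=-\tfrac12 J*J+ib\,\pa J$ with $J=J_{(0)}+j$, use locality to discard everything regular at $z_j$ (the factors not involving $z_j$, the poles at $q$, the symmetric points $z_k^*=1/\bar z_k$), and read off the double pole from the double contraction in $J*J$ plus the $ib\,\pa J$ term; your coefficient $\tfrac12\sigma_j^2-b\sigma_j=h_j$ is exactly what the paper gets via \eqref{eq: TVpf1}--\eqref{eq: TVpf2}. Up to and including the double pole the proposal is correct and complete.

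The gap is at the simple pole. You correctly reduce the problem to checking that the cross term of the double contraction, the single contraction $-\mathcal{J}\,J_{(0)}(\zeta)\odot\OO$ (where $\mathcal{J}$ denotes the contraction of $J(\zeta)$ with the exponent of the Wick exponential), and the drift $j$ reassemble into $\pa_{z_j}\OO=(\pa_{z_j}\log M)\,\OO+i\sigma_j\,J_{(0)}(z_j)\odot\OO$ --- but you then declare this check unnecessary because ``once the double-pole coefficient is $h_j$ the simple pole is forced to equal $\pa_{z_j}\OO$,'' citing the fact that $\OO$ is an $[\bfs h,\bfs h_*]$-differential. That implication is false. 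The differential transformation law constrains how $\OO$ changes under charts; it says nothing about the OPE of $\OO$ with $T$. The simple pole would be forced only if one already knew $\OO\in\FF(A_{(b)},\overline{A_{(b)}})$, i.e.\ that the residue form of Ward's identity holds for $\OO$ --- which is precisely what Proposition~\ref{TO} is establishing (taken literally, your argument would grant Ward's OPE to any differential whose double-pole coefficient happens to equal its dimension), and there is no translation-invariance shortcut in the radial setting. The paper closes this step by explicit Wick calculus: with $\mathcal{J}=-i\big(\tfrac{\sigma}{\zeta-z}+\tfrac{\sigma_*}{\zeta-z^*}+\tfrac{\sigma_0}{\zeta-z_0}+\tfrac{\sigma_{0*}}{\zeta-z_0^*}\big)$, the $(\zeta-z)^{-1}$ coefficient of $-\tfrac12\mathcal{J}^2\OO$ is $\sigma\big(\tfrac{\sigma_*}{z-z^*}+\tfrac{\sigma_0}{z-z_0}+\tfrac{\sigma_{0*}}{z-z_0^*}\big)\OO$, which one verifies equals $\pa_z\log\big[(1-|w|^2)^{\sigma\sigma_*}I(z,z_0)\big]\cdot\OO$, so that together with $i\sigma J_{(0)}(z)\odot\OO$ it is exactly $\pa_z\OO$. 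You must carry out this bookkeeping (which your setup already makes routine) rather than appeal to the differential property.
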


\begin{proof}
We verify Ward's OPEs by Wick's calculus.
We consider the simplest case $n=2$ only. 
There is no difficulty in extending the operator product expansions to the $n$-point vertex fields.
Since $\OO$ is a differential, it is sufficient to verify Ward's OPEs for $\OO$ in just one chart, e.g., in the disc uniformization.
In the global chart of $(\D,0),$ we have
$$\Phi=\Phi_{(0)} + 2b\arg z,\quad \Phiplus=\Phiplus_{(0)} -ib\log z,\quad J=J_{(0)} + j\quad (j=-\frac{ib}z),$$
and
$$T=T_{(0)}-jJ_{(0)} + ib\partial J_{(0)} + (ib\pa j -\frac12j^2),\quad T_{(0)}=-\frac12 J_{(0)}\odot J_{(0)}.$$
Consider the special case $b=0$ first. 
All we need to show is
\begin{align*}
T_{(0)}(\zeta)\,\OO(z,z_0) &\sim \frac{\sigma^2}{2} \frac{\OO(z,z_0)}{(\zeta-z)^2} +i\sigma\frac{J_{(0)}(z)\odot \OO(z,z_0)}{\zeta-z} \\ 
&+\Big(-\sigma\sigma_*\frac{\bar z}{1-|z|^2}+\sigma\sigma_0\frac1{z-z_0}-\sigma\sigma_{0*}\frac{\bar z_0}{1-z\bar z_0}\Big)\frac{\OO(z,z_0)}{\zeta-z}
\end{align*}
as $\zeta\to z.$
Denote
$$\mathcal{J}\equiv \mathcal{J}(\zeta;z,z_0):=\E[J(\zeta)(i\sigma\Phiplus_{(0)}(z)-i\sigma_*\Phiminus_{(0)}(z)+i\sigma_0\Phiplus_{(0)}(z_0)-i\sigma_{0*}\Phiminus_{(0)}(z_0))].$$
Then by differentiating \eqref{eq: formal E} 
$$\mathcal{J} =-i\Big(\frac{\sigma}{\zeta-z}+\frac{\sigma_*}{\zeta-z^*}+\frac{\sigma_0}{\zeta-z_0}+\frac{\sigma_{0*}}{\zeta-z_0^*}\Big),$$
where $z^*=1/\bar z, z_0^* = 1/\bar z_0$ are symmetric points of $z,z_0$ with respect to the unit circle $\pa\D,$ respectively.
By Wick's calculus, 
$$T_{(0)}(\zeta)\,\OO = T_{(0)}(\zeta)\odot \OO - \mathcal{J}\,J_{(0)}(\zeta)\odot \OO -\frac12 \,\mathcal{J}^2\,\OO. 
$$
As $\zeta\to z,$ $-\mathcal{J}^2\,\OO/2$ satisfies 
$$-\frac12 \,\mathcal{J}^2\, \OO\sim \Big(\frac12\frac{\sigma^2}{(\zeta-z)^2}+\frac{\sigma}{\zeta-z}\big(\frac{\sigma_*}{z-z^*}+\frac{\sigma_0}{z-z_0}+\frac{\sigma_{0*}}{z-z_0^*}\big)\Big) \OO(z,z_0).$$
For $b\ne 0,$ we just need to show that
\begin{equation} \label{eq: TVpf1}
\Sing_{\zeta\to z}\frac1\zeta\, J_{(0)}(\zeta)\, \OO(z,z_0)=-\frac{i\sigma}{z}\frac{\OO(z,z_0)}{\zeta-z}
\end{equation}
and
\begin{equation} \label{eq: TVpf2}
\Sing_{\zeta\to z}\pa J_{(0)}(\zeta)\, \OO(z,z_0)= i\sigma\,\frac{\OO(z,z_0)}{(\zeta-z)^2}.
\end{equation}
We derive \eqref{eq: TVpf2} by differentiating the singular operator product expansion
$$\Sing_{\zeta\to z}J_{(0)}(\zeta)\,\OO(z,z_0)=-i\sigma\,\frac{\OO(z,z_0)}{\zeta-z}.$$
This also gives \eqref{eq: TVpf1}.
Similarly, we get Ward's OPE for $\bar\OO.$
\end{proof}

\ms\subsection{Rooted vertex fields} \label{ss: O*} 
We now normalize the multi-vertex field 
\begin{align*}
\OO^{(\sigma,\sigma_{*})}(z)\, \OO^{(\sigma_0,\sigma_{0*})}(z_0) =
&M^{(\sigma,\sigma_{*})}(z)\, M^{(\sigma_0,\sigma_{0*})}(z_0) I(z,z_0) \\
&e^{\odot (i\sigma\Phiplus_{(0)}(z)-i\sigma_*\Phiminus_{(0)}(z)+i\sigma_0\Phiplus_{(0)}(z_0)-i\sigma_{0*}\Phiminus_{(0)}(z_0))},
\end{align*}
so that the limit exists as $z_0\to q.$
We denote this limit $\OO^{(\sigma,\sigma_*;\sigma_q,\sigma_{q*})} (z)$ (with $\sigma_q = \sigma_0$ and $\sigma_{q*} = \sigma_{0*}$).
We now define the rooted (formal) vertex fields (and explain how to arrive to this definition  below), 
\begin{align} \label{eq: O}
\OO^{(\sigma,\sigma_*;\sigma_q,\sigma_{q*})}(z) = &(w')^h (\overline{w'})^{h_*}w^\nu \bar w^{\nu_*}(w'_q)^{h_q}(\overline{w'_q})^{h_{q*}}(1-|w|^2)^{\sigma\sigma_*}\\
&e^{\odot (i\sigma\Phiplus_{(0)}(z)-i\sigma_*\Phiminus_{(0)}(z)+i\sigma_q\Phiplus_{(0)}(q)-i\sigma_{q*}\Phiminus_{(0)}(q))}, \nonumber
\end{align}
where the exponents are given by 
$$
h = \sigma^2/2-b\sigma, \, \, 
\nu = \mu + \sigma\sigma_q = (b+\sigma_q)\sigma, \,\, 
h_q = \mu_0 + h_0 =\sigma_q^2/2
$$
and 
$$
h_* = \sigma_*^2/2-b\sigma_*, \, \, 
\nu_* = \mu_* + \sigma_*\sigma_{q*} =(b+\sigma_{q*})\sigma_*, \, \, 
h_{q*} = \mu_{0*} + h_{0*} = \sigma_{q*}^2/2.
$$
Note that not only the conformal dimensions $h_0,h_{0*}$ but also the exponents $\mu_0, \mu_{0*}$ in 
$$M^{(\sigma_0,\sigma_{0*})}(z_0) = (w_0')^{h_0 }(\overline{w_0'})^{h_{0*}}w_0^{\mu_0}\bar w_0^{\mu_{0*}}(1-|w_0|^2)^{\sigma_0\sigma_{0*}}$$ 
contribute to the conformal dimensions $h_q,h_{q*}$ at $q.$

\ms \textbullet~\textbf{Rooting rules.} This definition can be arrived to by applying the following rooting rules to the mixed vertex field $\OO^{(\sigma,\sigma_{*})}(z)\, \OO^{(\sigma_0,\sigma_{0*})}(z_0)$: 
\begin{enumerate}
\item the term
$$(1-|w_0|^2)^{\sigma_0\sigma_{0*}}$$ 
in $M^{(\sigma_0,\sigma_{0*})}(z_0)$ and all terms in the interaction term 
$$I(z,z_0)=(w-w_0)^{\sigma\sigma_0}(\bar w-\bar w_0)^{\sigma_*\sigma_{0*}} (1-w\bar w_0)^{\sigma\sigma_{0*}} (1-\bar w w_0)^{\sigma_*\sigma_0}$$
involving the point $z_0$ are ignored;
\item the term 
$$w_0^{\mu_0}\bar w_0^{\mu_{0*}}$$ 
in $M^{(\sigma_0,\sigma_{0*})}(z_0)$ is replaced by 
$$(w'_q)^{\mu_q}(\overline{w'_q})^{\mu_{q*}},$$
where $\mu_q = \mu_0$ and $\mu_{q*} = \mu_{0*}.$
\end{enumerate}
Equivalently, 
\begin{equation} \label{eq: normalization}
\OO^{(\sigma,\sigma_*;\sigma_q,\sigma_{q*})}(z) = \lim_{\ve\to0} z_\ve^{-\mu_q} \bar z_\ve^{-\mu_{q*}}\OO^{(\sigma,\sigma_{*})}(z)\, \OO^{(\sigma_q,\sigma_{q*})}(z_\ve) ,
\end{equation}
where the point $z_\ve$ is at distance $\ve$ from $q$ in the chart $\phi.$
If the neutrality condition holds, then the vertex fields $\OO^{(\sigma,\sigma_*;\sigma_q,\sigma_{q*})}$ are well-defined Fock space fields. 
Furthermore, they are primary and invariant with respect to $\Aut(D,q).$
Thus they satisfy the equations of $\FF_{(b)}$-theory. 

\ms There is no difficulty in extending the definition of $1$-point rooted vertex fields to $n$-point fields. 
By definition,
\begin{align} \label{eq: OO*}
\OO^{(\bfs\sigma,\bfs\sigma_*;\tau,\tau_*)}(\bfs z) = &(w'_q)^{h_q}(\overline{w'_q})^{h_{q*}} \prod_{j} \,M_j \prod_{j<k} I_{j,k} \\
&e^{\odot i(\tau\Phiplus_{(0)}(q) - \tau_*\Phiminus_{(0)}(q) + \sum \sigma_j\Phiplus_{(0)}(z_j) - \sigma_{j*}\Phiminus_{(0)}(z_j))},
\nonumber
\end{align}
where the interaction terms $I_{jk}$ are given by \eqref{eq: Ijk} and 
\begin{equation} \label{eq: Mj}
M_j =(w_j')^{h_j} (\overline{w_j'})^{h_{j*}}w_j^{\nu_j} \bar w_j^{\nu_{j*}} (1-|w_j|^2)^{\sigma_j\sigma_{j*}}
\end{equation}
with $\nu_j = \mu_j + \sigma_j\tau = (b+\tau)\sigma_j, \nu_{j*} = \mu_{j*} + \sigma_{j*}\tau_* = (b+\tau_*)\sigma_{j*}.$
Rooted vertex fields $\OO^{(\bfs\sigma,\bfs\sigma_*;\tau,\tau_*)}$ have dimensions $[\bfs h, \bfs h_*;h_q,h_{q*}]:$
$$h_j = \frac{\sigma_j^2}2-\sigma_jb, \qquad h_{j*} = \frac{\sigma_{j*}^2}2-\sigma_{j*}b; \qquad h_q = \frac{\tau^2}2,\qquad h_{q*} = \frac{\tau_*^2}2.$$

\ms \subsection{One-leg fields} \label{ss: 1-leg fields} 
In Subsection~\ref{ss: main}, we introduce Wick's exponentials 
$$e^{\odot -\frac a2 \wt\Phi_{(0)}(p,q)} = e^{\odot ia(\Phiplus_{(0)}(p,q) + \frac12 \Phi_{(0)}(q))}, \qquad(p\in\pa D)$$
as insertion fields. 
In this subsection, we normalized them properly to construct a one-parameter family of $\Aut(D,q)$-invariant primary fields
$$\oneleg:=\OO^{(a,0;-a/2,-a/2)}$$
in $\FF_{(b)}.$
By definition~\eqref{eq: O}, 
\begin{equation} \label{eq: one-leg} 
\oneleg(z) = \Big(\frac{w'(z)}{w(z)}\Big)^h |w'(q)|^{2h_q}e^{\odot ia(\Phiplus_{(0)}(z,q) + \frac12\Phi_{(0)}(q))},
\end{equation}
where $w$ is a conformal map from $(D,q)$ to $(\D,0).$
They are $[h,0]$-differentials with respect to $z$ and $[h_q,h_{q*}]$-differentials with respect to $q.$ 
The conformal dimensions of $\oneleg$ are given by 
$$h = a^2/2-ab, \qquad h_q =h_{q*}= a^2/8.$$
The conformal dimensions $[h_q,h_{q*}]$ at $q$ do not depend on the central charge.

\ms Write $H_q = h_q + h_{q*}$ for the rooted vertex field $\OO.$ 
We now define the effective dimension $H_q^\eff \equiv H_q^\eff(\OO)$ of the rooted vertex field $\OO$ by 
$$H_q^\eff := h_q + h_{q*}-b^2$$
and the effective rooted vertex field $\OO^\eff$ by 
$$\OO^\eff := \OO\,\PP(q)$$
so that $H_q^\eff(\OO) = H_q(\OO^\eff),$
where $\PP(q)$ is the ``puncture operator." 
Recall that it is a $[-b^2/2,-b^2/2]$-differential with respect to $q$ and $\PP(q)\equiv1$ in the identity chart of $\D.$
See Remark in Subsection~\ref{ss: main}.
If the parameters $a$ and $b$ are related to the SLE parameter $\kappa$ as 
$$a = \sqrt{2/\kappa},\qquad b = a(\kappa/4-1),$$
then the effective one-leg operator $\oneleg^\eff$ has conformal dimensions 
$$h(\oneleg^\eff) = \frac{a^2}2-ab = \frac{6-\kappa}{2\kappa},\,\,H_q(\oneleg^\eff) = \frac{a^2}4-b^2 = \frac{h(\oneleg^\eff) }6 + \frac{c}{12} = \frac{(\kappa-2)(6-\kappa)}{8\kappa},$$
where $c$ is the central charge. 
Traditional notations for the conformal dimensions $h(\oneleg^\eff)$ and $H_q(\oneleg^\eff)/2$ are $h_{1,2}$ and $h_{0,1/2},$ respectively.

\ms \subsection{Ward's identities for rooted vertex fields} \label{ss: Ward4O*}
The rooted multi-vertex field $\OO^{(\sigma,\sigma_*;\sigma_q,\sigma_{q*})}$ can be represented in terms of a renormalization procedure, see \eqref{eq: normalization}. 
Thus the properties of $\OO^{(\sigma,\sigma_*;\sigma_q,\sigma_{q*})}$ can be derived from those of bi-vertex fields $\OO^{(\sigma,\sigma_{*})}(z)\, \OO^{(\sigma_0,\sigma_{0*})}(z_0).$
In particular, Ward's OPEs survive under the normalization procedure. 
\begin{prop} \label{TO*}
As $\zeta\to z_j,$
$$T(\zeta)\OO(\bfs z)\sim h_{j\phantom{*}}\frac{\OO(\bfs z)}{(\zeta-z_j)^2} + \frac{\pa_{z_j}\OO(\bfs z)}{\zeta-z_j},$$
where $T \equiv T_{(b)}$ and $\OO\equiv\OO^{(\bfs\sigma,\bfs\sigma_*;\tau,\tau_*)}.$
Similar operator product expansion (with $\bar h_{j*}$) holds for $\bar\OO.$ 
\end{prop}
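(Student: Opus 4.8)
The plan is to obtain Ward's OPEs for the rooted vertex fields directly from those already established for the multi-vertex fields (Proposition~\ref{TO}), by verifying that the singular part of $T(\zeta)\OO(\bfs z)$ as $\zeta\to z_j$ survives the rooting limit~\eqref{eq: normalization}. The guiding observation is that rooting alters a multi-vertex field only through a non-random prefactor together with a limit sending one node to $q;$ since every $z_j$ in $\bfs z$ stays bounded away from $q,$ I expect the operator-product structure at $z_j$ to be untouched by this limit. In particular, the double pole of $T$ at $q$ plays no role here, because the expansion is taken at $z_j\ne q.$

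\ms Concretely, I would set
$$\OO_\ve(\bfs z):= z_\ve^{-\mu_q}\bar z_\ve^{-\mu_{q*}}\,\OO^{((\bfs\sigma,\tau),(\bfs\sigma_*,\tau_*))}(\bfs z, z_\ve),\qquad (\mu_q = b\tau,\ \mu_{q*}=b\tau_*),$$
where the field on the right is an honest multi-vertex field carrying the extra node $z_\ve\to q,$ so that $\OO^{(\bfs\sigma,\bfs\sigma_*;\tau,\tau_*)}(\bfs z)=\lim_{\ve\to0}\OO_\ve(\bfs z)$ by~\eqref{eq: normalization}. Applying Proposition~\ref{TO} at the node $z_j,$ which is distinct from $z_\ve,$ and multiplying through by the non-random scalar $z_\ve^{-\mu_q}\bar z_\ve^{-\mu_{q*}}$ (which commutes with both $T(\zeta)$ and $\pa_{z_j}$), I get, as $\zeta\to z_j,$
$$T(\zeta)\,\OO_\ve(\bfs z)\sim h_j\frac{\OO_\ve(\bfs z)}{(\zeta-z_j)^2}+\frac{\pa_{z_j}\OO_\ve(\bfs z)}{\zeta-z_j},$$
with $h_j=\sigma_j^2/2-b\sigma_j$ (see~\eqref{eq: OOdim}) independent of $\ve.$

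\ms The final step is to let $\ve\to0.$ The prefactor $z_\ve^{-\mu_q}\bar z_\ve^{-\mu_{q*}}$ is precisely what renders the limit finite, cancelling the singular $(z_\ve-q)$-dependence of $w_\ve^{\mu_q}\bar w_\ve^{\mu_{q*}}$ as $w_\ve\to0;$ since $z_j$ is separated from $q,$ I expect the correlations of $\OO_\ve(\bfs z)$ against any test functional to converge locally uniformly in a neighborhood of $z_j.$ This would force the two OPE coefficients—Fock space fields depending smoothly on $z_j$—to converge to $h_j\OO^{(\bfs\sigma,\bfs\sigma_*;\tau,\tau_*)}(\bfs z)$ and $\pa_{z_j}\OO^{(\bfs\sigma,\bfs\sigma_*;\tau,\tau_*)}(\bfs z)$ respectively, the latter because $\pa_{z_j}$ commutes with the $\ve$-limit, yielding the desired OPE; the antiholomorphic statement with $\bar h_{j*}$ follows verbatim. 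The hard part will be justifying this interchange of the $\ve\to0$ limit with the extraction of the singular part at $z_j$—that is, establishing that the rooting convergence is locally uniform on compacts avoiding $q,$ so that the singular coefficients may legitimately be passed to the limit term by term.
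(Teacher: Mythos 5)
Your proposal is correct and is essentially the paper's own argument: the paper proves Proposition~\ref{TO*} only by the one-line remark preceding it, namely that the rooted field is the normalized limit \eqref{eq: normalization} of a multi-vertex field with an extra node tending to $q,$ so Ward's OPEs at the fixed nodes $z_j$ (Proposition~\ref{TO}) ``survive under the normalization procedure.'' Your write-up simply makes that limiting argument explicit (including the uniformity issue the paper leaves tacit), so it matches the paper's route.
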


\ms Also Ward's identities for $\OO^{(\sigma,\sigma_*;\sigma_q,\sigma_{q*})}$ can be obtained from Ward's identities for $\OO^{(\sigma,\sigma_{*})}(z)\, \OO^{(\sigma_0,\sigma_{0*})}(z_0).$

\ms\begin{prop} \label{Ward4V*}
If $v$ is a non-random local holomorphic vector field smooth up to the boundary, and if $v(q)=0,v'(q)=1,$ then Ward's identities
\begin{equation} \label{eq: Ward4V*}
\E\big[W(v)\,\OO(\bfs z)\big] 
=\E\,\LL_v\big[\OO(\bfs z) \big]
+(h_q+h_{q*})\,\E\,\big[\OO(\bfs z) \big]
\end{equation}
hold true for $\OO \equiv \OO^{(\bfs\sigma,\bfs\sigma_*;\tau,\tau_*)}$ with the neutrality condition provided that $z_j$'s are in $D_\hol(v)\sm\{q\}.$ 
\end{prop}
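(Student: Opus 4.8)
The plan is to obtain \eqref{eq: Ward4V*} as the rooting limit of the ordinary Ward identity for the corresponding \emph{unrooted} multi-vertex field. I work in the $(\D,0)$-chart, so $q=0$ and $w=\id$, and for $z_0$ near $q$ I set
$$X_{z_0}:=\OO^{(\bfs\sigma,\bfs\sigma_*)}(\bfs z)\,\OO^{(\tau,\tau_*)}(z_0),\qquad h_\tau:=\tfrac{\tau^2}2-b\tau,\ \ h_{\tau*}:=\tfrac{\tau_*^2}2-b\tau_*,\ \ \mu_q:=b\tau,\ \mu_{q*}:=b\tau_*.$$
By \eqref{eq: normalization}, $z_0^{-\mu_q}\bar z_0^{-\mu_{q*}}X_{z_0}\to\OO$ as $z_0\to q$. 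All nodes of $X_{z_0}$ lie in $D\sm\{q\}$, so $X_{z_0}\in\FF_{(b)}$ and Ward's OPEs hold for it by Proposition~\ref{TO}; hence \eqref{eq: Ward in Dq} (and its conjugate) give $\E\,W(v)\,X_{z_0}=\E\,\LL_v X_{z_0}$. Since $z_0^{-\mu_q}\bar z_0^{-\mu_{q*}}$ is a non-random scalar it commutes with the linear functional $W(v)$ and with $\E$, so the whole problem reduces to computing
$$\lim_{z_0\to q}\ z_0^{-\mu_q}\bar z_0^{-\mu_{q*}}\ \E\,\LL_v X_{z_0}.$$

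I would split $\LL_v X_{z_0}=\LL_v^{\bfs z}X_{z_0}+\LL_v^{z_0}X_{z_0}$ according to the node on which the derivative acts. The exterior block $\LL_v^{\bfs z}$ carries the scalar through unchanged, and as $z_0\to q$ it converges to $\E\,\LL_v\OO$ because $z_0^{-\mu_q}\bar z_0^{-\mu_{q*}}X_{z_0}\to\OO$ as a differential at each $z_j$. Thus the first term of \eqref{eq: Ward4V*} is accounted for, and the entire anomalous term $(h_q+h_{q*})\E\,\OO$ must come from the node at $z_0$. Because $\OO^{(\tau,\tau_*)}(z_0)$ is an $[h_\tau,h_{\tau*}]$-differential,
$$\LL_v^{z_0}X_{z_0}=\big(h_\tau v'(z_0)+v(z_0)\pa_{z_0}\big)X_{z_0}+\big(h_{\tau*}\overline{v'(z_0)}+\overline{v(z_0)}\pa_{\bar z_0}\big)X_{z_0}.$$

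The decisive computation is the transport term. Writing $\E\,X_{z_0}=z_0^{\mu_q}\bar z_0^{\mu_{q*}}F(z_0)$, the rooting rules guarantee that $F$ extends continuously to $z_0=q$ with $F(q)=\E\,\OO$: indeed $F$ is the product of $(1-|z_0|^2)^{\tau\tau_*}$, the interaction factors $I_{0,k}$ of \eqref{eq: Ijk}, and the (constant in $z_0$) exterior data, all smooth at $z_0=q$ once the singular power $z_0^{\mu_q}\bar z_0^{\mu_{q*}}=w_0^{\mu_0}\bar w_0^{\mu_{0*}}$ has been stripped off. Then
$$z_0^{-\mu_q}\bar z_0^{-\mu_{q*}}\,v(z_0)\,\pa_{z_0}\E\,X_{z_0}=\mu_q\,\frac{v(z_0)}{z_0}\,F(z_0)+v(z_0)\,\pa_{z_0}F(z_0).$$
Here the hypotheses $v(q)=0$ and $v'(q)=1$ are essential: $v(z_0)/z_0\to v'(q)=1$, so the first piece tends to $\mu_q\,\E\,\OO$, while $v(z_0)\to0$ against the bounded $\pa_{z_0}F$ kills the second. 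Adding the weight contribution $h_\tau v'(z_0)\E\,X_{z_0}\to h_\tau\E\,\OO$, the holomorphic half of the $z_0$-term converges to $(h_\tau+\mu_q)\E\,\OO=\tfrac{\tau^2}2\,\E\,\OO=h_q\,\E\,\OO$; symmetrically the anti-holomorphic half gives $h_{q*}\E\,\OO$. This yields \eqref{eq: Ward4V*}, and the passage to several exterior nodes is identical, since they only enlarge the $\LL_v^{\bfs z}$ block.

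The step I expect to be the real obstacle is justifying that the rooting limit commutes with $\E\,\LL_v(\cdot)$, i.e.\ that $z_0^{-\mu_q}\bar z_0^{-\mu_{q*}}\E\,X_{z_0}$ and its first $z_0$-derivative converge as $z_0\to q$, uniformly in the presence of the arbitrary test insertions hidden in the correlation. This is precisely the analytic content of the rooting rules: rule~(1) discards the only factors that are singular as $z_0\to q$ (the interaction terms involving $z_0$ and $(1-|w_0|^2)^{\sigma_0\sigma_{0*}}$), leaving the regular remainder $F$, while rule~(2) is the bookkeeping $z_0^{\mu_q}\bar z_0^{\mu_{q*}}=w_0^{\mu_0}\bar w_0^{\mu_{0*}}\mapsto(w_q')^{\mu_q}(\overline{w_q'})^{\mu_{q*}}$ that transfers the weight to $q$. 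Once this regularity is established the interchange is routine and the limit is exactly the one computed above.
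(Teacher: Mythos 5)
Your proposal is correct and follows essentially the same route as the paper's proof: apply Ward's identity to the unrooted field $\OO^{(\bfs\sigma,\bfs\sigma_*)}(\bfs z)\,\OO^{(\tau,\tau_*)}(z_0),$ normalize by $z_0^{-\mu_q}\bar z_0^{-\mu_{q*}},$ and observe that in the limit the Lie derivative at the exterior nodes reproduces $\LL_v\OO$ while the node at $z_0$ contributes exactly $\big(\mu_q\,v(z_0)/z_0+h_\tau\,v'(z_0)\big)$ times the normalized field, which under $v(q)=0,$ $v'(q)=1$ tends to $(\mu_q+h_\tau)\,\E\,\OO=h_q\,\E\,\OO$ (and symmetrically $h_{q*}$). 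Your additional remarks on the regularity of $F$ after stripping the singular power make explicit the interchange of limit and Lie derivative that the paper treats as routine.
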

The Lie derivative operators $\LL_v$ do not apply to $q,$ i.e., $\LL_v = \LL(v,D\sm\{q\}).$
\begin{proof} 
It suffices to prove \eqref{eq: Ward4V*} for $\OO = \OO^{(\sigma,\sigma_*;\sigma_q,\sigma_{q*})}.$
We write $\OO(z,z_\ve)$ for $\OO^{(\sigma,\sigma_{*})}(z)\, \OO^{(\sigma_q,\sigma_{q*})}(z_\ve)$ and $\OO(z)$ for $\OO^{(\sigma,\sigma_{*};\sigma_q,\sigma_{q*})}(z),$ respectively.
Let $[\lambda_q,\lambda_{q*}]$ be the conformal dimensions of $\OO^{(\sigma_q,\sigma_{q*})}(z_\ve)$ and let $\mu_q = b\sigma_q, \mu_{q*} = b\sigma_{q*}.$
Since Ward's identities hold for $\OO(z,z_\ve),$ all we need to verify is the following equation: 
$$\lim_{\ve\to0}\frac{\LL_v \OO(z,z_\ve)}{z_\ve^{\mu_q}\bar z_\ve^{\mu_{q*}}}=\LL_v \OO(z) + (h_q+h_{q*})\,\OO(z).$$
Since both $\OO(z,z_\ve)$ and $\OO(z)$ are differentials, it is sufficient to show it in $(\D,0)$-uniformization.
It is obvious that 
$$\lim_{\ve\to0}\frac{\LL_v(z)\OO(z,z_\ve)}{z_\ve^{\mu_q}\bar z_\ve^{\mu_{q*}}}=\LL_v \OO(z).$$
The only non-trivial terms that contribute to the limit of
$$\frac{\LL_v^+(z_\ve)\OO(z,z_\ve)}{z_\ve^{\mu_q}\bar z_\ve^{\mu_{q*}}} = \frac{v(z_\ve)\pa_{z_0}\OO(z,z_\ve)+\lambda_q v'(z_\ve)\OO(z,z_\ve)}{z_\ve^{\mu_q}\bar z_\ve^{\mu_{q*}}} $$ 
are
$$\Big(\mu_q\frac{v(z_\ve)}{z_\ve} + \lambda_q v'(z_\ve)\Big) \frac{\OO(z,z_\ve)}{z_\ve^{\mu_q}\bar z_\ve^{\mu_{q*}}}.$$
Since $v(q)=0,v'(q)=1,$ we have 
$$\lim_{\ve\to0}\frac{\LL_v^+(z_\ve)\OO(z,z_\ve)}{z_\ve^{\mu_q}\bar z_\ve^{\mu_{q*}}} =(\mu_q+\lambda_q) \OO(z) = h_q\, \OO(z).$$
Similar equation (with $h_{q*}$) holds for $\LL_v^-(z_\ve)\OO(z,z_\ve).$
\end{proof}

The following proposition is immediate from Propositions~\ref{represent T} and \ref{Ward4V*}.

\begin{prop} \label{T=L+eff}
For a rooted vertex field $\OO \equiv \OO^{(\bfs\sigma,\bfs\sigma_*;\tau,\tau_*)}$ with the neutrality condition,
\begin{equation} \label{eq: T=L+eff}
2\zeta^2\,\E[T_{(b)}(\zeta)\,\OO] = \E[(\LL_{v_\zeta}^++\LL_{v_{\zeta^*}}^-)\,\OO] + H_q^\eff(\OO)\, \E[\OO],\qquad (\zeta\in\D),
\end{equation}
where all fields are evaluated in the identity chart of $\D\sm\{0\}.$
\end{prop}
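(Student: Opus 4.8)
The plan is to substitute the disc representation of the Virasoro field from Proposition~\ref{represent T} into the Ward identity for rooted vertex fields (Proposition~\ref{Ward4V*}), while keeping apart the two distinct contributions localized at the puncture $q$: the conformal weight of the $q$-vertex of $\OO$, and the double pole of $T_{(b)}$ produced by the central-charge modification.

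First I would apply Proposition~\ref{represent T} to write
\begin{equation*}
2\zeta^2\,\E[T_{(b)}(\zeta)\,\OO] = \E[W^+(v_\zeta;\D)\,\OO] + \E[W^-(v_{\zeta^*};\D)\,\OO],
\end{equation*}
where $v_\zeta$ is the Loewner field and $v_{\zeta^*}=v_\zeta^\#$ its reflection. Since $\zeta\in\D$ forces $\zeta^*=1/\bar\zeta$ to lie outside $\overline\D$, the field $v_{\zeta^*}$ is holomorphic in $\D$; evaluating at the origin gives $v_\zeta(q)=v_{\zeta^*}(q)=0$ and $v_\zeta'(q)=v_{\zeta^*}'(q)=1$, so both fields meet the hypotheses of Proposition~\ref{Ward4V*}. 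I would then decompose each full functional via $W^+(v;\D)=W^+(v;\D\sm\{q\})+T_v(q)$ together with the conjugate relation for $W^-$.

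For the $\D\sm\{q\}$ pieces I would invoke the holomorphic and antiholomorphic halves of Proposition~\ref{Ward4V*} (the two limits computed separately in its proof): attaching the holomorphic half to $v_\zeta$ gives $\E[\LL_{v_\zeta}^+\OO]+h_q\,\E[\OO]$, and the antiholomorphic half attached to $v_{\zeta^*}$ gives $\E[\LL_{v_{\zeta^*}}^-\OO]+h_{q*}\,\E[\OO]$, the weights $h_q,h_{q*}$ being the residual contribution of the receding vertex of $\OO$ at $q$. For the residue pieces $T_{v_\zeta}(q)$ and $\overline{T_{v_{\zeta^*}}(q)}$, I would evaluate them against the regularized field $\OO(z,z_\ve)$ (whose $q$-charge sits at $z_\ve\ne q$), so that Lemma~\ref{Tv} applies with $v_\zeta'(q)=v_{\zeta^*}'(q)=1$ and contributes $-\frac{b^2}{2}\E[\OO]$ from each, before finally sending $z_\ve\to q$. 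Adding the four contributions, the weights combine to $h_q+h_{q*}$ and the two residues to $-b^2$, so the total $q$-term is $h_q+h_{q*}-b^2=H_q^\eff(\OO)$, which is \eqref{eq: T=L+eff}.

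The step I expect to be the main obstacle is the bookkeeping at $q$: one must avoid counting the vertex of $\OO$ at $q$ twice. This is handled by respecting the order of operations in the rooting limit—taking the residue $T_v(q)$ against $\OO(z,z_\ve)$ first, where the contour around $q$ encircles no node of the field and therefore sees only the non-random double pole $-\frac{b^2}{2}(w'/w)^2$ of $T_{(b)}$ (giving $-\frac{b^2}{2}$ via Lemma~\ref{Tv}, since the random part $A_{(b)}$ has only a simple pole and is killed by the simple zero of $v$), and only afterwards letting $z_\ve\to q$, which produces the conformal weight $h_q$ through the Lie derivative at the vanishing node. A secondary point to verify is that the holomorphic and antiholomorphic parts are carried by the two different fields $v_\zeta$ and $v_{\zeta^*}$; this is legitimate precisely because each is holomorphic in $\D$ and vanishes simply at $q$, exactly as Proposition~\ref{Ward4V*} demands.
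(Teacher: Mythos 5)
Your proposal is correct and is exactly the route the paper intends: the paper's own ``proof'' is the single remark that the proposition is immediate from Propositions~\ref{represent T} and \ref{Ward4V*}, and your unpacking — splitting $W^\pm(\cdot\,;\D)$ into the $\D\sm\{q\}$ part, handled by the polarized Ward identities of Proposition~\ref{Ward4V*}, plus the residue at $q$, handled by Lemma~\ref{Tv} against the regularization $\OO(z,z_\ve)$ so that $h_q+h_{q*}$ and $-b^2$ are each collected exactly once — is the right way to make that remark precise, and your flagged ``order of operations'' point is indeed the only delicate step. The one slip is the closing claim that $v_\zeta$ is holomorphic in $\D$ (it has a simple pole at $\zeta\in\D$); this is harmless, since Proposition~\ref{Ward4V*} only requires the nodes of $\OO$ to lie in $D_\hol(v_\zeta)\sm\{q\}=\D\sm\{\zeta,q\}$.
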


\ms Ward's equation~\eqref{eq: T=L+eff} can be rewritten as 
\begin{equation} \label{eq: T=effL}
2\zeta^2\,\E[T_{(b)}(\zeta)\,\OO] = \E[(\LL^+(v_\zeta,\D)+\LL^-(v_{\zeta^*},\D))\,\PP(q)\,\OO],\qquad (\zeta\in\D),
\end{equation}
where $\PP(q)$ is the ``puncture" operator. 
See Subsection~\ref{ss: 1-leg fields} or Remark in Subsection~\ref{ss: main} for its definition. 
To interpret this, we need to compute $\E\,T_v(0)\,\OO.$

\ms \begin{lem} 
For a meromorphic vector field $v$ with a zero at $0$ and a rooted vertex field $\OO \equiv \OO^{(\bfs\sigma,\bfs\sigma_*;\tau,\tau_*)}$ with the neutrality condition,
$$\frac1{2\pi i}\,\E\oint_{(0)} vT_{(b)}\,\OO = \Big(-\frac{b^2}2 + h_q\Big)\,v'(0)\,\E\,\OO.$$
\end{lem}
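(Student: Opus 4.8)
The plan is to work in the disc uniformization and split off the deterministic double pole of the Virasoro field. By \eqref{eq: TA} (equivalently \eqref{eq: TA2}), in the identity chart of $\D\sm\{0\}$ one has $S_w=0$ and $w'/w=1/\zeta$, so that
$$T_{(b)}(\zeta) = A_{(b)}(\zeta) - \frac{b^2}2\frac1{\zeta^2}, \qquad \E\,A_{(b)}=0.$$
Since the contour integral and the expectation commute, the computation reduces to the residue at $0$ of $v(\zeta)\,\E[T_{(b)}(\zeta)\,\OO]$. The explicit term contributes $-\frac{b^2}2\frac1{2\pi i}\oint_{(0)}v(\zeta)\zeta^{-2}\,d\zeta\,\E\OO = -\frac{b^2}2 v'(0)\,\E\OO$, because $v$ has a simple zero at $0$. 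The remaining content of the lemma is therefore to show that the $A_{(b)}$-term contributes $h_q\,v'(0)\,\E\OO$.

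For this I would repeat the Wick computation from the proof of Proposition~\ref{TO}, but now tracking the charge sitting at $q=0$. Writing $A_{(b)}=A_{(0)}+(ib\pa-j)J_{(0)}$ with $A_{(0)}=-\frac12 J_{(0)}\odot J_{(0)}$ and $j=-ib/\zeta$, and setting
$$\mathcal J(\zeta) = \E\big[J_{(0)}(\zeta)\,\big(i\tau\Phiplus_{(0)}(q) - i\tau_*\Phiminus_{(0)}(q) + \textstyle\sum_j(i\sigma_j\Phiplus_{(0)}(z_j) - i\sigma_{j*}\Phiminus_{(0)}(z_j))\big)\big],$$
Wick's calculus gives $\E[A_{(b)}(\zeta)\OO]=\big(-\frac12\mathcal J^2+ib\,\pa_\zeta\mathcal J+\frac{ib}{\zeta}\mathcal J\big)\E\OO$. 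Differentiating the formal correlations \eqref{eq: formal E} in the disc chart, the charge at $q=0$ contributes $-i\tau/\zeta$ to $\mathcal J$ (the antiholomorphic charge $\tau_*$ drops out, since $\bar q=0$ forces $\E[J_{(0)}(\zeta)\Phiminus_{(0)}(0)]=0$), while the charges at the $z_j\ne0$ contribute a part $R(\zeta)$ holomorphic near $0$, so that $\mathcal J(\zeta)=-i\tau/\zeta+R(\zeta)$.

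I then expand near $\zeta=0$. The key point is the coefficient of $\zeta^{-2}$: it equals $\frac{\tau^2}2-b\tau+b\tau=\frac{\tau^2}2=h_q$, the two $b\tau$-terms arising from $ib\,\pa_\zeta\mathcal J$ and $\frac{ib}{\zeta}\mathcal J$ cancelling exactly. The $\zeta^{-1}$ coefficient is $i(\tau+b)R(0)$, and all lower-order terms are regular. Multiplying by $v(\zeta)=v'(0)\zeta+O(\zeta^2)$ and extracting the residue, only the double pole survives (the simple-pole term is killed by the simple zero of $v$), giving $\frac1{2\pi i}\oint_{(0)}v\,\E[A_{(b)}\OO]=h_q\,v'(0)\,\E\OO$. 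Adding the two contributions yields the claim.

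The main obstacle is the bookkeeping in the middle step: one must check that the antiholomorphic charge at $q$ produces no holomorphic singularity and, above all, confirm the cancellation of the $b\tau$-terms, so that the double-pole coefficient is the bare $h_q=\tau^2/2$ rather than a $b$-dependent quantity. Conceptually this parallels Lemma~\ref{Tv}: the novelty is that the charge at $q$ promotes the pole of $\E[A_{(b)}(\zeta)\OO]$ at $0$ from simple (as it was for functionals with no node at $q$) to double, and the residue of that double pole against $v$ is precisely $h_q\,v'(0)$.
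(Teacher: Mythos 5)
Your proof is correct and follows essentially the same route as the paper: the same chart decomposition $T_{(b)}=-\tfrac12 J_{(0)}\odot J_{(0)}+ib(\pa_\zeta+1/\zeta)J_{(0)}-\tfrac{b^2}{2\zeta^2}$, the same Wick contraction against the charges, and the same observation that only the double pole at $0$ survives integration against $v$. Your cancellation of the two $b\tau$-terms is exactly the paper's remark that $\E\,J_{(0)}(\zeta)\Phiplus_{(0)}(q)=-1/\zeta$ is annihilated by $\pa_\zeta+1/\zeta$, so the two arguments coincide.
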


\begin{proof}
The proof is straightforward by Wick's calculus. 
In the identity chart of $\D\sm\{0\},$ 
$$T_{(b)}(\zeta) = -\frac12 J_{(0)}(\zeta) \odot J_{(0)} (\zeta)+ ib\Big(\pa_\zeta+\frac1\zeta\Big) J_{(0)}(\zeta) -\frac{b^2}2\frac1{\zeta^2}.$$
The term $ib(\pa_\zeta+1/\zeta) J_{(0)}(\zeta)$ has no contribution to $\E\,T_v(0)\,\OO$ since $\E\, J_{(0)}(\zeta)\Phiminus(q)=0$ and the 2-point function $\E\, J_{(0)}(\zeta)\Phiplus(q)=-1/\zeta$ is annihilated by the differential operator $\pa_\zeta + 1/\zeta.$ 
It is easy to see that the contribution of the term $-b^2/(2\zeta^2)$ to $\E\,T_v(0)\,\OO$ is $-b^2/2\,v'(0)\,\E\,\OO.$
Lemma now follows since 
\begin{align*}
\frac1{2\pi i}\,&\E\oint_{(0)} v(\zeta)(J_{(0)}(\zeta) \odot J_{(0)} (\zeta))\,\OO\,d\zeta \\
&= 
\frac1{2\pi i}\oint_{(0)} \E\,\zeta v'(0)\,\big(\E\, J_{(0)}(\zeta)(i\tau\Phiplus(q) - i\tau_*\Phiminus(q)) \big)^2\,\OO\,d\zeta = -\tau^2\,v'(0)\,\E\,\OO
\end{align*}
and $h_q = \tau^2/2.$
\end{proof}

\ms Let us interpret \eqref{eq: T=effL}. 
Recall that 
$\LL_v^+X = v\pa X + \lambda v'X$ for a $[\lambda,\lambda_*]$-differential $X.$ 
Since $\OO$ is a $[h_q,h_{q*}]$-differential with respect to $q,$ 
$$\LL_v^+(q)\,\OO= h_q\,v'(q)\,\OO$$
for a smooth vector field $v$ with a zero at $q.$
Thus Ward's equation~\eqref{eq: T=L+eff} can be rewritten as \eqref{eq: T=effL} and Proposition~\ref{represent T} holds in the sense of correlations with $\OO.$ 

\ms Since the vector fields $$v_z(\zeta) = \zeta\frac{z+\zeta}{z-\zeta} \qquad ({\rm in}\;\D)$$ have a zero at 0 and $v_z'(0)=(v_z^\#)'(0)=1,$ we can apply Proposition~\ref{T=L+eff} to the vector fields $v_z$ and derive Ward's equations under the insertion of rooted vertex field $V\equiv\OO^{(\sigma,\sigma_*;\sigma_q,\sigma_{q*})}$ in terms of the modes $L_{-2},L_{-1},L_0$ of the Virasoro field $T.$
Recall the modes $L_n$ of the Virasoro field $T_{(b)}:$
\begin{equation} \label{eq: Ln}
L_n(z) := \frac1{2\pi i}\oint_{(z)} (\zeta-z)^{n+1}\,T(\zeta)\,d\zeta.
\end{equation}
The following form of Ward's equations is useful to prove Theorem~\ref{main O}. 

\ms \begin{prop} \label{Ward4VX}
For a 1-point rooted vertex field $V,$ and a rooted multi-vertex field $\OO$ with the neutrality condition, 
\begin{align*}
\E\,&V(z)\star\LL_{v_z}^+\OO + \E\,\LL_{v_{z^*}}^-(V(z)\star\OO)\\ 
&=2z^2\,\E[(L_{-2}V)(z)\star\OO] +3z\,\E[(L_{-1}V)(z)\star\OO] \\& 
+(h(V)-H_q^\eff(V\star\OO))\,\E[V(z)\star\OO] , 
\end{align*}
where $z$ is different from any nodes of $\OO$ and all fields are evaluated in the identity chart of $\D\sm\{0\}.$
\end{prop}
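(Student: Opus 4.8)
The plan is to deduce this form of Ward's equation from the rooted representation of the Virasoro field, Proposition~\ref{T=L+eff}, by localizing the action of the Loewner vector field at the node $z$ and then letting $\zeta\to z$. Since both $V$ and $\OO$ carry the neutrality condition, so does their normalized tensor product $V(z)\star\OO$, which is again a rooted vertex field; hence Proposition~\ref{T=L+eff} applies to it verbatim. This supplies a closed expression for $2\zeta^2\,\E[T(\zeta)\,(V(z)\star\OO)]$ that I will compare, as $\zeta\to z$, with the primary operator product expansion of $T$ against $V$ at $z$.

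First I would apply \eqref{eq: T=L+eff} to $V(z)\star\OO$, evaluated at a point $\zeta\in\D$ distinct from $z$, from $0$, and from all nodes of $\OO$:
\begin{equation*}
2\zeta^2\,\E[T(\zeta)\,(V(z)\star\OO)] = \E[(\LL_{v_\zeta}^+ + \LL_{v_{\zeta^*}}^-)\,(V(z)\star\OO)] + H_q^\eff(V\star\OO)\,\E[V(z)\star\OO].
\end{equation*}
Because $\LL_v=\LL(v,\D\sm\{0\})$ never differentiates the vertex sitting at $q=0$, Leibniz's rule splits the holomorphic term as
\begin{equation*}
\LL_{v_\zeta}^+(V(z)\star\OO) = (\LL_{v_\zeta}^+(z)V)(z)\star\OO + V(z)\star\LL_{v_\zeta}^+\OO,
\end{equation*}
where $\LL_{v_\zeta}^+(z)$ acts only at the node $z$. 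Next I would rewrite $\LL_{v_\zeta}^+(z)V$ through the contour identity \eqref{eq: sing OPE}. Although that identity is stated for the quadratic differential $A$, it holds verbatim with $A$ replaced by $T$ at the node $z$: the difference $T-A=-\tfrac{b^2}{2}\zeta^{-2}$ is holomorphic away from $0$, so its contour integral around $z\ne 0$ vanishes. Using that $V$ is primary at $z$ with dimension $h(V)$, so that $L_0V=h(V)V$ and $L_{-1}V=\pa V$, this gives
\begin{equation*}
\LL_{v_\zeta}^+(z)V = 2\zeta^2\,\Sing_{\zeta\to z}[T(\zeta)V(z)] - (2\zeta+z)(L_{-1}V)(z) - (L_0V)(z).
\end{equation*}

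I would then match this against the left-hand side. The OPE of $T$ against the primary field $V\star\OO$ at $z$ (Proposition~\ref{TO*}) reads $T(\zeta)(V(z)\star\OO) = \Sing_{\zeta\to z}[T(\zeta)V(z)]\star\OO + (L_{-2}V)(z)\star\OO + O(\zeta-z)$, the point being that the $\star$-vertex at $q$ is transparent to the local expansion at $z$, so both the singular part and the zeroth regular coefficient $L_{-2}$ factor through $\star$. Substituting everything back, the two copies of $2\zeta^2\,\E[\Sing_{\zeta\to z}[T(\zeta)V(z)]\star\OO]$ cancel, leaving an identity in which every surviving $\zeta$-dependence is regular at $\zeta=z$. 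Letting $\zeta\to z$, so that $v_\zeta\to v_z$ and $v_{\zeta^*}\to v_{z^*}$ (the pole of $v_{\zeta^*}$ stays near $z^*\ne z$, hence the anti-holomorphic term is regular throughout), $2\zeta+z\to 3z$, and the $O(\zeta-z)$ remainder drops out, and using $L_0V=h(V)V$ to combine $\E[(L_0V)(z)\star\OO]$ with $-H_q^\eff(V\star\OO)\,\E[V(z)\star\OO]$, yields exactly the asserted equation.

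The main obstacle is bookkeeping the vertex at $q$: justifying that rooting commutes with localization at $z$, i.e.\ that $\Sing_{\zeta\to z}[T(\zeta)(V\star\OO)]=(\Sing_{\zeta\to z}[T(\zeta)V])\star\OO$ and $L_{-2}(V\star\OO)=(L_{-2}V)\star\OO$ at $z$, and that the Leibniz split above is legitimate with the $q$-node held fixed. This rests on the construction of the $\star$-product by the limit \eqref{eq: normalization}, together with the fact that $v_\zeta$ and $v_{\zeta^*}$ vanish at $0$, so that their only effect at $q$ is the scalar $H_q^\eff(V\star\OO)$ already isolated in Proposition~\ref{T=L+eff}. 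Once this transparency is granted, the remaining manipulation is the same contour bookkeeping as in the proof of Proposition~\ref{T-Ward}.
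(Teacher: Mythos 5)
Your argument is correct and follows essentially the same route as the paper's proof: apply Proposition~\ref{T=L+eff} to $V(z)\star\OO$, split the Lie derivative by Leibniz's rule at the node $z$, and use \eqref{eq: sing OPE} together with $L_{-1}V=\pa V$ and $L_0V=h(V)V$ to pass to the limit $\zeta\to z$. Your explicit remarks on replacing $A$ by $T$ in \eqref{eq: sing OPE} and on the transparency of the $\star$-vertex at $q$ only make explicit what the paper's shorter proof leaves implicit.
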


\begin{rmks*}
\renewcommand{\theenumi}{\alph{enumi}}
{\setlength{\leftmargini}{1.7em}
\begin{enumerate}
\ss\item The Lie derivative operators $\LL_{v}^\pm$ do not apply to the point $q,$ i.e., $\LL_{v}^\pm = \LL^\pm(v,\D\sm\{0\});$
\ss\item 
Leibniz's rule applies to $\star$-products, i.e., 
$$\LL_{v}^+(V\star\OO) = (\LL_{v}^+V)\star\OO + V\star(\LL_{v}^+\OO);$$
\item The fields $V(z)\star\LL_{v_z}^+\OO$ can be obtained by applying the rooting rules to the tensor product of a bi-vertex field and the Lie derivative of a multi-vertex field. 
Since $V,\OO,$ and $V\star\OO$ are differentials $(V\star\pa_j\OO = \pa_j(V\star\OO)),$ one can express $V(z)\star\LL_{v_z}^+\OO$ explicitly. 
\end{enumerate}}
\end{rmks*}

\begin{proof}
By Proposition~\ref{T=L+eff},
\begin{align*}
\E\,&V(z)\star \LL^+_{v_\zeta}\OO + \E\,\LL^-_{v_{\zeta^*}} [V(z)\star\OO] \\
&=2\zeta^2\,\E\,[T(\zeta) V(z)\star\OO]-\E (\LL^+_{v_\zeta} V)(z)\star\OO- H_q^\eff(V\star\OO)\,\E\,[V(z)\star\OO].
\end{align*}
It follows from \eqref{eq: sing OPE} that 
$$\lim_{\zeta\to z} 2\zeta^2\,T(\zeta) V(z)-\LL^+_{v_\zeta} V(z) = 2z^2\, T*V(z) + 3z \,T*_{-1}V (z)+ T*_{-2}V(z).$$ 
Since $V$ is primary, $T*_{-2}V = hV, \,(h\equiv h(V)).$
\end{proof}

\ms Ward's equations under the insertion of $V(z) \,(z\in\D)$ in Proposition~\ref{Ward4VX} can be extended to the case that $z\in\pa\D.$ 
In the next section we combine this form of Ward's equations with the level two degeneracy equations (Proposition~\ref{TVd2V} below) for one-leg fields $\oneleg$ to prove Theorem~\ref{main O}.

\ms \subsection{Level two degeneracy equations} We now briefly review the level two degeneracy equations for current primary fields and derive the level two degeneracy equations for one-leg fields $\oneleg.$
Let $\{J_n\}$ and $\{L_n\}$ (see \eqref{eq: Ln}) denote the modes of the current field $J$ and the Virasoro field $T$ in $\FF_{(b)}$ theory, respectively: 
$$J_n(z):=\frac1{2\pi i}\oint_{(z)}(\zeta-z)^{n} J(\zeta)~d\zeta.$$
Recall that $X\in\FF_{(b)}$ is (Virasoro) primary if $X$ is a $[\lambda,\lambda_*]$-differential; equivalently
\begin{equation} \label{eq: T primary}
L_{\ge1}X=0,\qquad L_0X=\lambda X, \qquad L_{-1}X=\partial X,
\end{equation}
and similar equations hold for $\bar X.$
Here, $L_{\ge k}X = 0$ means that $L_nX = 0$ for all $n\ge k.$ 
See Section 5.4 in \cite{KM11} for more details.
A (Virasoro) primary field $X$ is called \emph{current primary} if 
\begin{equation} \label{eq: J primary1}
J_{\ge1}X = J_{\ge1}\bar X = 0,
\end{equation}
and
\begin{equation} \label{eq: J primary2}
J_0X = -iqX, \quad J_0\bar X = i\bar q_*\bar X
\end{equation}
for some numbers $q$ and $q_*$ (``charges" of $X$). 
Let us recall the characterization of level two degenerate current primary fields.

\ms \begin{prop}[Proposition~E.2 in \cite{KM11}] \label{degeneracy2} 
Let $V$ be a current primary field in $\FF_{(b)},$ and let $q,q_*$ be charges of $V.$
If $2q(b+q) = 1,\, \eta = -1/(2q^2),$ then
$$(L_{-2}+\eta L_{-1}^2)V =0.$$
\end{prop}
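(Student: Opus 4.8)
The plan is to push everything down to the current modes $J_n$ and then use the current-primary conditions to collapse both $L_{-2}V$ and $L_{-1}^2V$ onto the two fields $J_{-1}^2V$ and $J_{-2}V$; the two asserted relations $\eta=-1/(2q^2)$ and $2q(b+q)=1$ will then drop out as exactly the conditions that kill the coefficients of these two descendants.

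First I would write the Virasoro modes in terms of the current modes using $T=-\frac12 J*J+ib\pa J$ (Proposition~\ref{Tb}). Reading off coefficients of $z^{-n-2}$ gives the Sugawara-type formula
$$L_n=-\tfrac12\sum_{m}{:}J_mJ_{n-m}{:}\;-\;ib(n+1)\,J_n,$$
where ${:}\,\cdot\,{:}$ is the normal ordering placing $J_{\ge1}$ to the right. I would also record the mixed commutator read off from Ward's OPE for $J$, expanded as $T(\zeta)J(z)\sim 2ib/(\zeta-z)^3+J(z)/(\zeta-z)^2+\pa J(z)/(\zeta-z)$, namely
$$[L_m,J_n]=-n\,J_{m+n}+ib\,m(m+1)\,\delta_{m+n,0},$$
which I would confirm by the standard two-contour residue computation (the cubic pole supplies the anomalous term, the double and simple poles combine to $-n\,J_{m+n}$).

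Next I would evaluate on $V$ using $J_{\ge1}V=0$ and $J_0V=-iqV$. In each normal-ordered sum only the indices $m\in\{0,-1,-2\}$ survive, which gives
$$L_{-1}V=iq\,J_{-1}V,\qquad L_{-2}V=i(q+b)\,J_{-2}V-\tfrac12 J_{-1}^2V.$$
For the square I would compute $L_{-1}^2V=iq\,L_{-1}J_{-1}V$ and commute $L_{-1}$ through $J_{-1}$ by the special case $[L_{-1},J_{-1}]=J_{-2}$ (no anomaly, since $m(m+1)=0$ at $m=-1$), so that $L_{-1}^2V=iq(J_{-1}L_{-1}V+J_{-2}V)=-q^2J_{-1}^2V+iq\,J_{-2}V$. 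Assembling the combination yields
$$(L_{-2}+\eta L_{-1}^2)V=\big(-\tfrac12-\eta q^2\big)J_{-1}^2V+i\big((q+b)+\eta q\big)J_{-2}V.$$
The coefficient of $J_{-1}^2V$ vanishes exactly when $\eta=-1/(2q^2)$; substituting this value, the coefficient of $J_{-2}V$ becomes $i\big((q+b)-1/(2q)\big)$, which vanishes exactly when $2q(b+q)=1$. Under the two hypotheses both coefficients are zero, giving $(L_{-2}+\eta L_{-1}^2)V=0$.

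I expect the main obstacle to be the careful bookkeeping of the background-charge (anomalous) terms and the normal-ordering conventions: the coefficient $-ib(n+1)$ in the Sugawara formula and the cubic-pole residue in the mixed commutator must be tracked exactly, because it is precisely the $i(q+b)$ coefficient of $J_{-2}V$ — and hence the condition $2q(b+q)=1$ — that is sensitive to them. Since this computation is algebraically identical to the chordal one (Proposition~E.2 in \cite{KM11}), I would cross-check all constants against that case rather than rederiving the algebra from scratch.
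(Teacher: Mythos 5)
Your proof is correct. Note that the paper does not supply its own proof of this statement---it is imported verbatim as Proposition~E.2 of \cite{KM11}---and your argument (the Sugawara-type mode decomposition $L_n=-\tfrac12\sum_m{:}J_mJ_{n-m}{:}-ib(n+1)J_n$ coming from $T=-\tfrac12 J*J+ib\pa J$, reduction of $L_{-2}V$ and $L_{-1}^2V$ to the two descendants $J_{-1}^2V$ and $J_{-2}V$ via the current-primary conditions $J_{\ge1}V=0$, $J_0V=-iqV$ and the commutator $[L_{-1},J_{-1}]=J_{-2}$, and then matching coefficients) is exactly the standard computation carried out in that reference, with all constants coming out right.
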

As rooted chiral vertex fields, the one-leg operators $\oneleg\equiv\OO^{(a,0;-a/2,-a/2)}$ are Virasoro primary holomorphic fields of conformal dimension $\lambda= a^2/2-ab$ and current primary with charges $q = a, q_*=0.$
The singular OPE
$$J_{(0)}(\zeta)\oneleg(z) \sim -ia\, \frac1{\zeta-z}\, \oneleg(z) $$
follows from Wick's calculus 
\begin{align*}
J_{(0)}(\zeta)\oneleg(z) &= J_{(0)}(\zeta)\odot \oneleg(z)\\
&+ia\E[J_{(0)}(\zeta)(\Phiplus_{(0)}(z)-\frac12\Phiplus_{(0)}(q)+\frac12\Phiminus_{(0)}(q))]\,\oneleg(z)
\end{align*}
 and 
$$\E[J_{0}(\zeta)(\Phiplus_{(0)}(z)-\frac12\Phiplus_{(0)}(q)+\frac12\Phiminus_{(0)}(q))] = -\frac1{\zeta-z} + \frac1{2\zeta},\qquad (\textrm{in }\D).$$
On the other hand, the operator product expansion of $J_{(0)}(\zeta)$ and $\overline{\oneleg(z)} $ has no singular terms. 
Thus \eqref{eq: J primary1} and \eqref{eq: J primary2} follow.
Proposition~\ref{degeneracy2} implies the following level two degeneracy equation for one-leg operators $\oneleg.$

\ms \begin{prop} \label{TVd2V} 
We have
$$T_{(b)}*\oneleg=\frac1{2a^2}\partial^2 \oneleg,$$
provided that $2a(a+b)=1.$
\end{prop}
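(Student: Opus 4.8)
The plan is to deduce the statement directly from the general level-two degeneracy result, Proposition~\ref{degeneracy2}, once the relevant charges have been identified. The discussion immediately preceding this proposition already records that $\oneleg = \OO^{(a,0;-a/2,-a/2)}$ is a Virasoro primary holomorphic field of dimension $\lambda = a^2/2 - ab$ and is current primary with charges $q = a$ and $q_* = 0$; I would take this as the input. With $q = a$, the hypothesis $2a(a+b)=1$ is exactly the condition $2q(b+q)=1$ of Proposition~\ref{degeneracy2}, and the associated constant is $\eta = -1/(2q^2) = -1/(2a^2)$. Applying that proposition then gives the operator identity
$$(L_{-2} + \eta L_{-1}^2)\oneleg = 0, \qquad \eta = -\frac1{2a^2},$$
where the $L_n$ are the modes of $T\equiv T_{(b)}$ defined in \eqref{eq: Ln}.

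The remaining work is to translate the two modes into the quantities appearing in the statement. First, I would observe that $L_{-2}\oneleg$ is precisely the OPE product $T*\oneleg$: writing the Laurent expansion $T(\zeta)\oneleg(z) = \sum_m C_m(z)(\zeta-z)^m$, the residue in \eqref{eq: Ln} extracts the power $(\zeta-z)^{n+1+m}$ with $n+1+m=-1$, so $L_n\oneleg = C_{-n-2}$; the case $n=-2$ returns the zeroth regular coefficient $C_0$, which is by definition $T*\oneleg$. Second, since $\oneleg$ is primary, $L_{-1}$ acts as the holomorphic derivative $\partial$ on it and on its descendants (this is the relation $L_{-1}X=\partial X$ of \eqref{eq: T primary}), so that $L_{-1}^2\oneleg = \partial^2\oneleg$.

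Substituting these identifications into the degeneracy relation yields
$$T*\oneleg = -\eta\, L_{-1}^2\oneleg = \frac1{2a^2}\partial^2\oneleg,$$
which is the assertion. I do not expect a genuine obstacle here: the entire content is carried by Proposition~\ref{degeneracy2}, and the proof reduces to matching its hypotheses to the charges of $\oneleg$ and unwinding the mode/OPE dictionary. The only point requiring a little care is the bookkeeping in the relation $L_n\oneleg = C_{-n-2}$ (in particular the index shift and sign), so as to be sure that it is $L_{-2}$ — and not some other mode — that equals $T*\oneleg$, and that $L_{-1}^2$ indeed produces the second derivative rather than a lower-order term.
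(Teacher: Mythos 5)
Your proposal is correct and follows essentially the same route as the paper: the text immediately preceding the proposition verifies via Wick's calculus that $\oneleg$ is current primary with charges $q=a$, $q_*=0$, and the stated result is then obtained exactly as you describe, by applying Proposition~\ref{degeneracy2} with $\eta=-1/(2a^2)$ and unwinding $L_{-2}\oneleg=T*\oneleg$, $L_{-1}^2\oneleg=\partial^2\oneleg$. Your explicit bookkeeping of the index shift $L_n\oneleg=C_{-n-2}$ is a correct and slightly more detailed account of the step the paper leaves implicit.
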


\ms As mentioned before, we will use this level two degeneracy equation to establish the connection between the radial SLE theory and conformal field theory.

\ms\section{Connection between radial SLE theory and CFT} \label{sec: SLE}

In Subsection~\ref{ss: one-leg} we discuss how the insertion of one-leg operator $\oneleg$ acts on Fock space fields in $D\sm\{q\}.$
In Subsections~\ref{ss: BPZ4X} and \ref{ss: SLEMO} we derive the boundary version of Belavin-Polyakov-Zamolodchikov type equations (BPZ-Cardy equations) from  Ward's equations and level two degeneracy equations to prove that correlators of fields (without nodes at $q$) in the OPE family $\FF_{(b)}$ of $\Phi_{(b)}$ under the insertion of one-leg operators are martingale-observables in radial SLEs.
Examples of such radial SLE martingale-observables are discussed in the last subsection. 
Vertex observables (with covariance at $q$) will be considered in the next section.

\bs\subsection{One-leg operators} \label{ss: one-leg}

The insertion of one-leg operator 
$$\oneleg(p):=\OO^{(a,0;-a/2,-a/2)}(p) \quad (p\in \partial D)$$ 
produces an operator 
$$\XX\mapsto\wh\XX$$
acting on Fock space functionals/fields by the formula
\begin{gather} \label{eq: BC}
\wh\Phi(z) =\Phi(z)+a\arg \frac{(1-w(z))^2}{w(z)},\\
\wh\Phiplus(z_1,z_0) =\Phiplus(z_1,z_0)-\frac{ia}2\log \frac{(1-w(z))^2}{w(z)}+\frac{ia}2\log \frac{(1-w(z_0))^2}{w(z_0)},\label{eq: BC+}
\end{gather}
(where $w:(D, p,q)\to (\D, 1,0)$ is a conformal map)
and the rules 
\begin{equation} \label{eq: BCrules}
\pa\XX\mapsto\pa\wh\XX,\qquad \bp\XX\mapsto\bp\wh\XX,\qquad \alpha\XX+\beta\YY\mapsto \alpha\wh\XX+\beta\wh\YY,\qquad \XX\odot\YY\mapsto\wh\XX\odot\wh\YY,
\end{equation}
for Fock space functionals $\XX$ and $\YY$ in $D\sm\{q\}.$

\ms Let $\wh\varphi := \E\,\wh\Phi.$
In the $(\D, 1,0)$-uniformization, this 1-point function $\wh\varphi$ is a unique (multivalued) harmonic function (defined on $\D\sm\{0\}$) satisfying the following conditions (see \cite{Dubedat09}):
\renewcommand{\theenumi}{\alph{enumi}}
{\setlength{\leftmargini}{1.7em}
\begin{enumerate}
\smallskip\item $\wh\varphi$ increases by $2b$ times the winding on the boundary, with an additional jump of $-2\pi a$ at $p=1;$
\smallskip\item $\wh\varphi$ has monodromy $-2\pi a+4\pi b$ around $q=0;$
\smallskip\item $\wh\varphi(z)=O(1)$ near $q=0$.
\end{enumerate}}

\ms We denote by $\wh \FF_{(b)}$ the image of $\FF_{(b)}$ under this correspondence.
Then fields in $\wh \FF_{(b)}$ are $\Aut(D,p,q)$-invariant because $\arg w/(1-w)^2$ is $\Aut(D,p,q)$-invariant and fields in $\FF_{(b)}$ are invariant with respect to $\Aut(D,q).$

\bs Denote
$$\wh \E[\XX]:=\frac{\E [\oneleg(p)\XX]}{\E [\oneleg(p)]}=\E [e^{\odot ia(\Phiplus_{(0)}(p,q) + \frac12 \Phi_{(0)}(q))}\XX].$$
As in the chordal case, we have the following:
\ms \begin{prop} \label{hat E=E hat}
Let $\wh\XX\in \wh \FF_{(b)}$ correspond to the string $\XX\in \FF_{(b)}$ under the map given by \eqref{eq: BC} -- \eqref{eq: BCrules}.
Then
\begin{equation} \label{eq: hat E=E hat}
\wh \E[\XX]=\E[\wh\XX].
\end{equation}
\end{prop}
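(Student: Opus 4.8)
The plan is to reduce \eqref{eq: hat E=E hat} to the general Fock-space principle that inserting the Wick exponential of a (formal) centered Gaussian functional implements a deterministic shift of the field under expectation --- the analogue of the Cameron--Martin translation used in the chordal case (\cite{KM11}). Set $Y := ia\big(\Phiplus_{(0)}(p,q) + \tfrac12\Phi_{(0)}(q)\big)$, so that by \eqref{eq: one-leg} the non-random prefactors of $\oneleg(p)$ cancel between numerator and denominator and $\wh\E[\XX] = \E[e^{\odot Y}\XX]$; here $\E[e^{\odot Y}] = 1$ because $Y$ is a centered Gaussian and Wick powers have vanishing mean. The insertion identity then reads
$$\E[e^{\odot Y}\XX] = \E[\XX^{+}],$$
where $\XX^{+}$ is obtained from $\XX$ by the substitutions $\Phi_{(0)} \mapsto \Phi_{(0)} + \E[\Phi_{(0)}\,Y]$ and $\Phiplus_{(0)} \mapsto \Phiplus_{(0)} + \E[\Phiplus_{(0)}\,Y]$, extended to $\pa,\bp$, linear combinations, and $\odot$-products by linearity and Leibniz. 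Since this is precisely the formal structure of \eqref{eq: BCrules}, it will suffice to match the two scalar shifts with the harmonic functions in \eqref{eq: BC} and \eqref{eq: BC+}.

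First I would compute the chiral shift from \eqref{eq: formal E} and \eqref{eq: G^+}, using $w(q)=0$ and $w(p)=1$: the contractions $\E[\Phiplus_{(0)}(z)\Phiplus_{(0)}(p)]=\log\frac1{w-1}$, $\E[\Phiplus_{(0)}(z)\Phiplus_{(0)}(q)]=\log\frac1w$, and $\E[\Phiplus_{(0)}(z)\Phiminus_{(0)}(q)]=0$ give
$$\E[\Phiplus_{(0)}(z)\,Y] = \tfrac{ia}2\log w - ia\log(w-1) = -\tfrac{ia}2\log\frac{(1-w)^2}{w}$$
up to an additive constant set by the branch of the logarithm, which is exactly the shift in \eqref{eq: BC+}. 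Adding the conjugate contractions $\E[\Phiminus_{(0)}(z)\,Y] = ia\log(1-\bar w) - \tfrac{ia}2\log\bar w$ and using $\log w - \log\bar w = 2i\arg w$ together with $\log(w-1)-\log(1-\bar w) = 2i\arg(1-w) + i\pi$ yields
$$\E[\Phi_{(0)}(z)\,Y] = 2a\arg(1-w) - a\arg w = a\arg\frac{(1-w)^2}{w}$$
(modulo the real constant $a\pi$), which is the shift in \eqref{eq: BC}. Because only the Gaussian part $\Phi_{(0)}$ of $\Phi_{(b)}=\Phi_{(0)}+\varphi$ is contracted with $Y$, the deterministic term $\varphi$ is left untouched, consistent with $\wh\Phi$ in \eqref{eq: BC} being written for $\Phi = \Phi_{(b)}$.

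With the two shifts identified, $\XX^{+}$ and the field $\wh\XX$ defined by \eqref{eq: BC}--\eqref{eq: BCrules} agree on generators and are both compatible with $\pa,\bp$, linearity, and $\odot$, so $\XX^{+} = \wh\XX$ and \eqref{eq: hat E=E hat} follows. I expect the principal difficulty to be bookkeeping rather than structural: one must control the multivaluedness of the logarithmic contractions and fix the additive constants left free by the formal Wick calculus by checking that $\wh\varphi = \E\,\wh\Phi$ is the unique harmonic function on $\D\sm\{0\}$ obeying the three characterizing conditions (the $2b$-times-winding boundary behavior with a jump $-2\pi a$ at $p$, monodromy $-2\pi a + 4\pi b$ at $q$, and boundedness near $q$); the monodromy $-2\pi a$ of the shift around $0$ comes from $-a\arg w$ and the $4\pi b$ from $\varphi$, so the two contributions add correctly. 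In the bi-variant shift \eqref{eq: BC+} these branch ambiguities cancel in the difference of the $z$- and $z_0$-terms, so no extra normalization is needed there.
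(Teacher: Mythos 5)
Your proposal is correct and follows essentially the same route the paper intends: the paper gives no separate proof here (it invokes "as in the chordal case," i.e., the Wick-exponential insertion/shift formula of \cite{KM11}), and your computation of the contractions $\E[\Phiplus_{(0)}(z)Y]$ and $\E[\Phi_{(0)}(z)Y]$ from \eqref{eq: formal E} with $w(p)=1$, $w(q)=0$ correctly reproduces the shifts in \eqref{eq: BC} and \eqref{eq: BC+}, with the deterministic part $\varphi$ untouched. The branch/constant bookkeeping you flag is handled exactly as you suggest, via the harmonic characterization of $\wh\varphi$.
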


\bs
\textbf{Examples:}
\renewcommand{\theenumi}{\alph{enumi}}
{\setlength{\leftmargini}{1.7em}
\begin{enumerate}
\item The current $\wh J$ is a pre-Schwarzian form of order $ib,$
$$\wh J = J + \frac{ia}2\frac{1+w}{w(1-w)}w'=J_{(0)} + ia\frac{w'}{1-w} + \Big(\frac{ia}2-ib\Big)\frac{w'}{w} + ib\frac{w''}{w'}.$$
In the $(\D,1,0)$-uniformization, $\E\,\wh J(z) = ia\dfrac{1}{1-z} + \Big(\dfrac{ia}2-ib\Big)\dfrac{1}{z};$
\ms \item \label{eg: T hat} The Virasoro field $\wh T$ is a Schwarzian form of order $c/{12},$
\begin{align*}
\wh T &= -\dfrac12\wh J*\wh J + ib\pa\wh J \\
&= A_{(0)} - \wh\jmath J_{(0)} + ib\pa J_{(0)} + \frac{c}{12}S_w + h_{1,2}\frac{w'^2}{w(1-w)^2} + h_{0,1/2}\frac{w'^2}{w^2},
\end{align*}
where $h_{1,2}=a^2/2-ab$ and $h_{0,1/2}={a^2}/8-b^2/2.$
In the $(\D,1,0)$-uniformization, $\E\,\wh T(z) = \dfrac{h_{1,2}}{z(1-z)^2} + \dfrac{h_{0,1/2}}{z^2};$
\ms \item The non-chiral vertex field $\wh\VV^\alpha$ is a $[-\alpha^2/2+i\alpha b,-\alpha^2/2-i\alpha b]$-differential, 
$$\wh\VV^\alpha=e^{2\alpha a\arg (1-w)-\alpha a\arg w}\VV^\alpha = e^{2\alpha a\arg (1-w) -\alpha a\arg w -2\alpha b\arg w'/w}C^{\alpha^2}e^{\odot\alpha\Phi_{(0)}}.$$
In the $(\D,1,0)$-uniformization,
$\wh \E\, \VV^\alpha=(1-|z|^2)^{\alpha^2}e^{2\alpha a\arg (1-z) + \alpha(-a+2b)\arg z};$
\ms\item The bi-vertex field $\wh V^\alpha(z, z_0)$ is a $-\alpha^2/2\pm i\alpha b$-differential with respect to both variables, 
\begin{align*}
\wh V^\alpha(z, z_0)&=
\left(\dfrac{w'(z)w'(z_0)}{(w(z)-w(z_0))^2}\right)^{-\alpha^2/2}\,\left(\dfrac{w'(z)}{w(z)}\dfrac{w(z_0)}{w'(z_0)}\right)^{i\alpha b}\\
&\times\left(\dfrac{w(z)}{(1-w(z))^2}\dfrac{(1-w(z_0))^2}{w(z_0)}\right)^{-i\alpha a/2}\,e^{\odot \alpha\Phiplus_{(0)}(z,z_0)}.
\end{align*}
In the $(\D,1,0)$-uniformization, 
$$\wh \E \,V^\alpha(z, z_0)=(z-z_0)^{\alpha^2}(1-z)^{-i\alpha a} (1-z_0)^{i\alpha a} z^{i\alpha(a/2-b)}z_0^{-i\alpha(a/2-b)}.$$
\end{enumerate}}

\ms The operator $\XX\mapsto\wh\XX$ can be extended to the formal vertex fields, e.g., 
$$\wh\OO^{(\sigma)} = \frac{(1-w)^{a\sigma}}{w^{a\sigma/2}} \OO^{(\sigma)}.$$
Since the interaction terms do not change, we arrive to the definition of $\wh\OO^{(\bfs\sigma,\bfs\sigma_*)} ,$ 
$$\wh\OO^{(\bfs\sigma,\bfs\sigma_*)} = \wh M^{(\bfs\sigma,\bfs\sigma_*)} e^{\odot i\sum \sigma_j\Phiplus_{(0)}(z_j) - \sigma_{j*}\Phiminus_{(0)}(z_j)},$$
where $\wh M^{(\bfs\sigma,\bfs\sigma_*)} = M^{(\bfs\sigma,\bfs\sigma_*)} \prod (1-w_j)^{a\sigma_j} w_j^{-a\sigma_j/2} (1-\bar w_j)^{a\sigma_{j*}} \bar w_j^{-a\sigma_{j*}/2}.$
Thus
\begin{equation} \label{eq: OOhat}
\wh\OO^{(\bfs\sigma,\bfs\sigma_*)}(\bfs z)
=\prod \wh M^{(\sigma_j,\sigma_{j*})}(z_j)\, \prod_{j<k} I_{j,k}(z_j,z_k) \,
e^{\odot i\sum \sigma_j\Phiplus_{(0)}(z_j) - \sigma_{j*}\Phiminus_{(0)}(z_j)},
\end{equation}
where $\wh M^{(\sigma_j,\sigma_{j*})} 
=(w_j')^{h_j }(\overline{w_j'})^{h_{j*}}w_j^{\wh\mu_j}\bar w_j^{\wh\mu_{j*}}(1-w_j)^{a\sigma_j}(1-\bar w_j)^{a\sigma_{j*}} (1-|w_j|^2)^{\sigma_j\sigma_{j*}}$ 
with the exponents
$$
\wh\mu_j = \mu_j - a\sigma_j /2= (b-a/2)\sigma_j, \quad 
\wh\mu_{j*} = \mu_{j*} - a\sigma_{j*}/2 =(b-a/2)\sigma_{j*}.
$$
The interaction terms $I_{j,k}$ are the same as \eqref{eq: Ijk}.

\ms For two multi-vertex fields $\OO_1\equiv\OO^{(\bfs\sigma_1,\bfs\sigma_{1*})}$ and $\OO_2\equiv\OO^{(\bfs\sigma_2,\bfs\sigma_{2*})}$ we have 
$$\wh\OO^{(\bfs\sigma_1,\bfs\sigma_{1*})} \wh\OO^{(\bfs\sigma_2,\bfs\sigma_{2*})} = \wh\OO^{(\bfs\sigma,\bfs\sigma_*)},$$
where $\bfs\sigma = \bfs\sigma_1 + \bfs\sigma_2$ and $\bfs\sigma_*=\bfs\sigma_{1*}+\bfs\sigma_{2*}.$
Thus
$$\wh\OO_1\wh\OO_2 = \wh{(\OO_1\OO_2)}.$$

\ms\subsection{BPZ-Cardy equations on the unit circle} \label{ss: BPZ4X}
For $\xi =e^{i\theta} \in \partial\mathbb{D}$ and the tensor product 
$$X=X_1(z_1)\cdots X_n(z_n)$$
of fields $X_j$ in the OPE family $\FF_{(b)}$ of $\Phi_{(b)}$ ($z_j\in\D$), we denote
$$\wh\E_\xi X=\E [e^{\odot ia(\Phiplus_{(0)}(\xi,q) + \frac12 \Phi_{(0)}(q))}X],\qquad (q=0).$$
Then $\wh\E X=\wh{\E}_\xi X\big|_{\xi=1}.$

\begin{prop} \label{BPZ4X} If $2a(a+b)=1,$ then in the identity chart of $\D,$ we have
\begin{equation} \label{eq: BPZ4X}
-\frac{1}{a^2}\partial_\theta^2 \wh\E_\xi X = \wh\E_\xi[\LL_{v_\xi}X],\qquad v_\xi(z):=z\frac{\xi+z}{\xi-z},
\end{equation}
where $\xi =e^{i\theta} $ and $\pa_\theta$ is the operator of differentiation with respect to the real variable $\theta.$
\end{prop}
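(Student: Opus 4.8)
The plan is to derive~\eqref{eq: BPZ4X} from Ward's equation for one-point rooted vertex fields (Proposition~\ref{Ward4VX}) together with the level two degeneracy of the one-leg operator (Proposition~\ref{TVd2V}). First I would express $\wh\E_\xi$ through a correlation of $\oneleg$. In the identity chart of $\D$ (so $w=\id$, $q=0$) formula~\eqref{eq: one-leg} reads $\oneleg(\zeta)=\zeta^{-h}\,e^{\odot ia(\Phiplus_{(0)}(\zeta,0)+\frac12\Phi_{(0)}(0))}$, so that $\wh\E_\xi X=\xi^{h}\,\E[\oneleg(\xi)\star X]$ and likewise $\wh\E_\xi[\LL_{v_\xi}X]=\xi^{h}\,\E[\oneleg(\xi)\star\LL_{v_\xi}X]$. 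Since $\oneleg$ is a holomorphic $[h,0]$-differential with $\sigma_*=0$ at its node, the function $g(\zeta):=\E[\oneleg(\zeta)\star X]$ is holomorphic, with $g'=\E[\pa\oneleg\star X]$ and $g''=\E[\pa^2\oneleg\star X]$. Writing $\xi=e^{i\theta}$, the tangential derivative acts on holomorphic boundary values by $\pa_\theta=i\xi\pa_\xi$, hence $\pa_\theta^2=-\xi^2\pa_\xi^2-\xi\pa_\xi$, and expanding $\pa_\theta^2(\xi^h g)$ gives
$$-\frac1{a^2}\pa_\theta^2\,\wh\E_\xi X=\xi^{h}\Big(\frac1{a^2}\xi^2 g''+\frac{2h+1}{a^2}\,\xi g'+\frac{h^2}{a^2}\,g\Big).$$

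Next I would apply Proposition~\ref{Ward4VX} with $V=\oneleg$ and $\OO=X$, evaluated at the boundary point $\xi$ (using the extension to $z\in\pa\D$ announced after that proposition). Because $\oneleg$ is Virasoro primary with $L_{-1}\oneleg=\pa\oneleg$ and carries the level two degeneracy $L_{-2}\oneleg=T*\oneleg=\frac1{2a^2}\pa^2\oneleg$ of Proposition~\ref{TVd2V}, and because $X$ has no charge at $q$ so that $H_q^\eff(\oneleg\star X)=H_q^\eff(\oneleg)=a^2/4-b^2$, the right-hand side of Ward's equation reduces to $\frac{\xi^2}{a^2}g''+3\xi g'+(h-H_q^\eff)g$. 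For the left-hand side I note that $\xi^*=\xi$ on $\pa\D$, so $v_{\xi^*}=v_\xi$, while $\LL_{v_\xi}^-\oneleg=0$ since $\oneleg$ is holomorphic with vanishing anti-holomorphic dimension; Leibniz's rule then merges the two Lie-derivative terms into $\E[\oneleg(\xi)\star\LL_{v_\xi}X]$. Multiplying by $\xi^h$ gives
$$\wh\E_\xi[\LL_{v_\xi}X]=\xi^{h}\Big(\frac1{a^2}\xi^2 g''+3\,\xi g'+(h-H_q^\eff)\,g\Big).$$

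It then remains to match the two displays. The $\xi^2 g''$ coefficients agree at once, and the remaining identities $(2h+1)/a^2=3$ and $h^2/a^2=h-H_q^\eff$ are precisely where the hypothesis $2a(a+b)=1$ enters: it forces $h=\frac32 a^2-\frac12$, so $2h+1=3a^2$, and using $ab=\frac12-a^2$ (whence $a^2b^2=(\frac12-a^2)^2$) one checks $h^2=a^2 h-\frac14 a^4+a^2 b^2=a^2(h-H_q^\eff)$. I expect the only delicate points to be the passage of Ward's identity to the boundary and the justification of $\pa_\theta=i\xi\pa_\xi$ on holomorphic boundary values (which rests on the holomorphicity of $\oneleg$ and of $g$); once these are secured, the equation is an algebraic consequence of the degeneracy relation and the charge constraint.
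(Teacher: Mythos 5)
Your argument is correct and follows essentially the same route as the paper: insertion of $\oneleg$, Ward's equation of Proposition~\ref{Ward4VX} with $\LL^-_v\oneleg=0$, the level two degeneracy $2a^2L_{-2}\oneleg=\pa^2\oneleg$, the fusion rule $h-H_q^\eff=h^2/a^2$, and the numerology $h=\tfrac32a^2-\tfrac12$, concluding with the identification $\xi^2\pa_\xi^2+\xi\pa_\xi=-\pa_\theta^2$ on $\pa\D$. The only (immaterial) difference is that the paper works with $\zeta\in\D$ and lets $\zeta\to\xi$ at the very end rather than evaluating Ward's equation directly on the boundary, which is exactly the delicate point you flag.
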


\begin{proof} Denote
$$R_\xi\equiv R_\xi(z_1,\cdots, z_n)=\wh{\E}_\xi X.$$
In the disc normalization, the evaluation of one-leg operator $\oneleg=\OO^{(a,0;-a/2,-a/2)}$ at $\xi$ is given by 
$$\oneleg(\xi) = \xi^{-h} e^{\odot ia(\Phiplus_{(0)}(\xi,q) + \frac12 \Phi_{(0)}(q))}, \qquad(h=a^2/2-ab),$$ 
(see \eqref{eq: one-leg}) and 
$R_\xi=\E[\xi^{h}\oneleg(\xi)X].$
Denote
$$R_\zeta\equiv R(\zeta;z_1,\cdots, z_n)=\E [\zeta^h \oneleg(\zeta)X],\qquad (\zeta\in\D).$$

Using Ward's equations (Proposition~\ref{Ward4VX}) and the fact that $\LL^-_{v}\oneleg(\zeta)=0,$ we have 
\begin{align*}
\E\,&\oneleg(\zeta)(\LL_{v_\zeta}^+X+\LL_{v_{\zeta^*}}^-X)\\
&=2\zeta^2\,\E[(L_{-2}\oneleg)(\zeta)X] +3\zeta\,\E[(L_{-1}\oneleg)(\zeta)X] +(h-H_q^\eff)\,\E\,\oneleg(\zeta)X.
\end{align*}
Conformal dimensions $h = a^2/2-ab, H_q^\eff = a^2/4-b^2$ of $\oneleg$ satisfy the so-called fusion rule:
\begin{equation} \label{eq: fusion rule}
h-H_q^\eff = \frac{h^2}{a^2}.
\end{equation}
By the level two degeneracy equations for $\oneleg$ (Proposition~\ref{TVd2V}, $2a^2L_{-2}\oneleg = \pa^2 \oneleg$) and $L_{-1}\oneleg = \pa \oneleg,$ we get
\begin{equation} \label{eq: Cardy0}
\E\,\oneleg(\zeta)(\LL^+_{v_\zeta}X +\LL^-_{v_{\zeta^*}}X)= \frac{1}{a^2}\zeta^{2}\pa_\zeta^2(\zeta^{-h}R_\zeta)+3\zeta\pa_\zeta(\zeta^{-h} R_\zeta)+\frac{h^2}{a^2}\,\zeta^{-h}R_\zeta.
\end{equation}
Equivalently, 
\begin{equation*}
\E\,\zeta^{h}\oneleg(\zeta)(\LL^+_{v_\zeta}X +\LL^-_{v_{\zeta^*}}X) = \frac1{a^2}\zeta^2\pa_\zeta^2R_\zeta + \Big(-\frac{2h}{a^2}+3\Big)\zeta\pa_\zeta R_\zeta + \Big(\frac{2h}{a^2}+\frac{1}{a^2}-3\Big)h\,R_\zeta.
\end{equation*}
It follows from the numerologies ($2a(a+b)=1$ and $h = a^2/2-ab$) that $h = 3a^2/2-1/2$ and 
$$\E\,\zeta^{h}\oneleg(\zeta)(\LL^+_{v_\zeta}X +\LL^-_{v_{\zeta^*}}X)= \frac1{a^2}\big(\zeta^2\pa_\zeta^2R_\zeta + \zeta\pa_\zeta R_\zeta\big).$$
As $\zeta\to\xi = e^{i\theta},$ the right-hand side converges to $-\pa_\theta^2 R_{e^{i\theta}}/a^2.$ 
On the other hand, since $\xi = \xi^*,$ the left-hand side converges to 
$$\E\,[\xi^h \oneleg(\xi)\LL_{v_\xi}X] = \wh\E_\xi[\LL_{v_\xi}X].$$ 
\end{proof}

\ms \textbf{Examples.} (a) Suppose that a 1-point field $X$ in the OPE family $\FF_{(b)}$ of $\Phi_{(b)}$ is a boundary differential of conformal dimension $h.$ 
Then BPZ-Cardy equation \eqref{eq: BPZ4X} reads
$$ \Big(\frac{\kappa}2\pa_\theta^2 + \cot\frac\theta2\pa_\theta -\frac{h}{2}\sec^2\frac\theta2\Big)\wh{\E}_{e^{i\theta}}[X(1)]=0.$$

\ms (b) The 1-point function $R(z) = \E\,\wh T(z)$ of the Virasoro field $\wh T$ is a Schwarzian form of order $c/12.$ 
Recall that
\begin{equation} \label{eq: Lie4S}
\LL_v X = (v\pa + 2v')X + \mu v'''
\end{equation}
for a Schwarzian form $X$ of order $\mu.$ 
Thus
$$\LL_{v_{_1}}R= \Big(z\frac{1+z}{1-z}\pa+2\frac{1+2z-z^2}{(1-z)^2}\Big)R +\frac{c}{(1-z)^4}.$$
Let
$R_\xi(z) = \E_\xi[\wh{T}(z)].$
Then, by rotation invariance, $R_\xi(z) = \xi^{-2} R(z/\xi)$ and
$$\Big(\big(\xi\pa_\xi\big)^2 R_\xi\Big)\Big|_{\xi=1}(z) = z^2\pa^2R(z)+ 5z\pa R(z) + 4 R(z) .$$
It follows from BPZ-Cardy equation \eqref{eq: BPZ4X} that
$$\frac1{a^2}\big(z^2\pa^2 + 5z\pa + 4 \big) R(z) = z\frac{1+z}{1-z} \pa R(z) + 2\frac{1+2z-z^2}{(1-z)^2} R(z) + \frac{c}{(1-z)^4}.$$

\ms (c) We generalize the previous example to the $n$-point function of $\wh T,$
$$R(z_1\cdots,z_n) := \E[\,\wh T(z_1) \cdots \wh T(z_n) \,\|\,\id_\D].$$
Denote $\bfs{z} = (z_1,\cdots,z_n),$ $\bfs{z}_j = (z_1,\cdots,\wh z_j,\cdots,z_n).$
By \eqref{eq: Lie4S} and Leibniz's rule, 
$$\LL_{v_{_1}}R(\bfs{z}) =\sum_{j=1}^n\Big(z_j\frac{1+z_j}{1-z_j}\pa_j+2\frac{1+2z_j-z_j^2}{(1-z_j)^2}\Big)R(\bfs{z}) + c \sum_{j=1}^n\frac{R(\bfs{z}_j)}{(1-z_j)^4}.$$
Setting
$R_\xi(\bfs{z}) = \E_\xi[\,\wh{T}(z_1)\cdots \wh{T}(z_n)\,],$
it follows from conformal invariance with respect to rotation that 
$R_\xi(\bfs{z}) = \xi^{-2n} R(\bfs{z}/\xi).$
This homogeneity property implies 
$$\Big(\big(\xi\pa_\xi\big)^2 R_\xi\Big)\Big|_{\xi=1}(\bfs{z}) = \sum z_jz_k\pa_j\pa_kR(\bfs{z}) + (4n+1) \sum z_j\pa_j R(\bfs{z}) + 4n^2 R(\bfs{z}).$$
By BPZ-Cardy equation \eqref{eq: BPZ4X}, we obtain the following recursive formula:
\begin{align*}
\frac1{2a^2}\big( \sum z_jz_k\pa_j\pa_k + (4n+1) \sum z_j\pa_j + 4n^2 \big) &R(\bfs{z}) \\
= \sum \frac{z_j}2 \frac{1+z_j}{1-z_j} \pa_j R(\bfs{z}) + \frac{1+2z_j-z_j^2}{(1-z_j)^2} &R(\bfs{z}) + \frac{c}2 \sum \frac{R(\bfs{z}_j)}{(1-z_j)^4}.
\end{align*}
See \cite{FW02} for the corresponding equation in the chordal case with $c=0.$

\ms\subsection{SLE martingale-observables} \label{ss: SLEMO}
We restate Theorem~\ref{main X} as follows.

\begin{thm} \label{MO}
If $X_j$'s are in the OPE family $\FF_{(b)}$ of $\Phi_{(b)},$ then the non-random fields
$$M(z_1,\cdots, z_n)=\wh\E [X_1(z_1)\cdots X_n(z_n)]$$
are martingale-observables for $\SLE_\kappa.$
\end{thm}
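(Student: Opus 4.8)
The plan is to reduce the local martingale property to the BPZ--Cardy equation (Proposition~\ref{BPZ4X}) by applying It\^o's calculus along the radial Loewner flow. First I would use conformal invariance to pull the observable back to the disc. Each $X_j$ is a $[h_j,h_{j*}]$-differential in $\FF_{(b)},$ and since $\oneleg$ contributes the same conformal dimensions at $p$ and at $q$ to both $\E[\oneleg(p)X]$ and $\E[\oneleg(p)],$ the ratio $\wh\E[X]$ is scalar at $p$ and at $q$ and carries nontrivial dimensions only at the nodes $z_j.$ Writing $\zeta_j=g_t(z_j)$ for the images of the nodes under the radial Loewner map $g_t\colon(D_t,\gamma_t,q)\to(\D,\xi_t,0),$ conformal covariance together with the rotation covariance of $\wh\E_\xi$ (which absorbs the factor relating $g_t$ to $w_t=g_t/\xi_t$) gives
\begin{equation*}
M_t(z_1,\dots,z_n)=\prod_{j}(g_t'(z_j))^{h_j}\,(\overline{g_t'(z_j)})^{h_{j*}}\;R_{\xi_t}(\zeta_1,\dots,\zeta_n),
\end{equation*}
where $R_\xi(\zeta_1,\dots,\zeta_n):=\wh\E_\xi[X_1(\zeta_1)\cdots X_n(\zeta_n)]$ ($\xi=e^{i\theta}$) is evaluated in the identity chart of $\D$ with the one-leg insertion at the driving point $\xi_t=e^{i\theta_t}.$

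Next I would set up the stochastic differential, taking as state variables the nodes $\zeta_j,$ the conformal factor $\Pi_t:=\prod_j(g_t'(z_j))^{h_j}(\overline{g_t'(z_j)})^{h_{j*}},$ and the driving angle $\theta_t=\sqrt\kappa B_t.$ The Loewner equation~\eqref{eq: g} reads $\pa_t g_t=v_{\xi_t}\circ g_t,$ so $\dot\zeta_j=v_{\xi_t}(\zeta_j)$ and $\pa_t\log g_t'(z_j)=v_{\xi_t}'(\zeta_j);$ thus $\zeta_j$ and $\Pi_t$ evolve with finite variation and only $\theta_t$ carries a Brownian part. Consequently there are no cross terms $d\zeta_j\,d\theta_t,$ and the sole second-order It\^o term comes from $(d\theta_t)^2=\kappa\,dt.$ Collecting the $dt$-terms and dividing by $\Pi_t,$ the drift of $M_t$ equals
\begin{equation*}
\sum_j\big(v_{\xi_t}(\zeta_j)\pa_{\zeta_j}+h_j\,v_{\xi_t}'(\zeta_j)\big)R_{\xi_t}+\text{c.c.}+\tfrac\kappa2\,\pa_\theta^2 R_{\xi_t}.
\end{equation*}

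The decisive point is that, by the transformation law $\LL_v^+Y=(v\pa+\lambda v')Y$ for a $[\lambda,\lambda_*]$-differential and Leibniz's rule, the first group of terms reassembles exactly into the Lie derivative $\LL_{v_{\xi_t}}R_{\xi_t}$ acting on the field nodes, while the last term is the It\^o correction from moving the insertion point. Since $a=\sqrt{2/\kappa}$ gives $1/a^2=\kappa/2,$ Proposition~\ref{BPZ4X} states precisely $\LL_{v_{\xi_t}}R_{\xi_t}=-\tfrac\kappa2\pa_\theta^2 R_{\xi_t}$ (the expectation commutes with the node-variable Lie derivative). The two drift contributions therefore cancel, so $dM_t$ has no $dt$-part and $M_t$ is a local martingale up to the time any $z_j$ exits $D_t,$ which is the assertion of Theorem~\ref{MO}.

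The hardest part is not the final cancellation but the bookkeeping that justifies the pulled-back formula and confirms that the finite-variation flow of the nodes produces the operator $\LL_{v_{\xi_t}}$ in exactly the normalization entering Proposition~\ref{BPZ4X}: one must verify the rotation-covariance step relating the $g_t$- and $w_t$-uniformizations, check the vanishing of the $p$- and $q$-dimensions in $\wh\E[X],$ and keep track of the holomorphic and anti-holomorphic pieces of the flow so that $\LL^+_{v_{\xi_t}}+\LL^-_{v_{\xi_t}}$ appears with the correct coefficients. The numerology $2a(a+b)=1$ and the fusion rule~\eqref{eq: fusion rule} enter only through Proposition~\ref{BPZ4X} and need not be revisited here.
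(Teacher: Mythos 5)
Your argument is essentially the paper's own proof: pull $M_t$ back through the Loewner map, observe that only the driving angle $\theta_t$ carries a Brownian part, identify the finite-variation drift of the nodes with the Lie derivative $\LL_{v_{\xi_t}}R_{\xi_t}$, and cancel it against the It\^o term $\tfrac\kappa2\pa_\theta^2 R_{\xi_t}$ via Proposition~\ref{BPZ4X} and $1/a^2=\kappa/2$. One caveat: your explicit drift formula $\sum_j(v_{\xi_t}(\zeta_j)\pa_{\zeta_j}+h_j v_{\xi_t}'(\zeta_j))R_{\xi_t}+\mathrm{c.c.}$ is the Lie derivative only for differentials, whereas $\FF_{(b)}$ also contains pre-Schwarzian and Schwarzian forms such as $J$ and $T$ (for which $\LL_v$ acquires $v''$ or $v'''$ terms); the paper avoids this by defining $L_t$ chart-free as $\frac{d}{ds}\big|_{s=0}(R_{\xi_t}\,\|\,g_t^{-1}\circ f_{s,t}^{-1})$, which equals $\LL_{v_{\xi_t}}R_{\xi_t}$ for any transformation law, and your proof is repaired by the same substitution.
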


\begin{proof}
Denote 
$$R_\xi(z_1,\cdots, z_n)\equiv\wh\E_\xi[X_1(z_1)\cdots X_n(z_n)].$$
By conformal invariance, the process 
$$M_t(z_1,\cdots, z_n)=M_{D_t,\gamma_t,q}(z_1,\cdots, z_n)$$ 
is represented by 
$$M_t = m(\xi_t,t), \qquad m(\xi,t) =\big(R_\xi\,\|\,g_t^{-1}\big), \qquad \xi_t = e^{i\sqrt\kappa B_t},$$
where $g_t:(D_t,\gamma_t,q)\to(\D,\xi_t,0)$ is the radial SLE map. 
Since the function $m(\xi,t)$ is smooth in both variables, we can apply It\^o's formula to the process $m(\xi_t,t).$ 
The drift term of $dM_t$ is equal to
$$-\frac\kappa2\Big((\xi\pa_\xi)^2\Big|_{\xi=\xi_t}~m(\xi,t)\Big)\,dt+L_t\, dt,$$
where
$$
L_t:=\frac d{ds}\Big|_{s=0}\big(R_{\xi_t}\,\|\,g_{t+s}^{-1}\big)
=\frac d{ds}\Big|_{s=0}\big(R_{\xi_t}\,\|\,g_t^{-1}\circ f_{s,t}^{-1}\big),\qquad (f_{s,t} = g_{t+s}\circ g_t^{-1}).
$$
The time-dependent flow $f_{s,t}$ satisfies the following differential equation: 
$$\frac{d}{ds}f_{s,t}(\zeta) = f_{s,t}(\zeta)\frac{\xi_{t+s}+f_{s,t}(\zeta)}{\xi_{t+s}-f_{s,t}(\zeta)}.$$
Thus we have 
$$f_{s,t} = \id + sv_{\xi_t} + o(s)\qquad(\textrm{as } s\to0),\qquad v_{\xi}(z)=z\frac{\xi+z}{\xi-z}.$$
Since the fields in the OPE family $\FF_{(b)}$ of $\Phi_{(b)}$ depend smoothly on local charts, we can represent $L_t$ in terms of Lie derivative:
$$L_t=\big(\LL_{v_{\xi_t}}R_{\xi_t}\,\|\,g_{t}^{-1}\big).$$
By BPZ-Cardy equation \eqref{eq: BPZ4X}, we get
$$L_t=\frac1{a^2}\big((\xi\pa_\xi)^2\Big|_{\xi=\xi_t} R_{\xi}\,\|\,g_{t}^{-1}\big).$$
Thus the drift term of $dM_t$ vanishes.
\end{proof}
In the next section we will consider the fields with a node at $0,$ e.g., the rooted multi-vertex fields.

\ms\subsection{Examples of SLE martingale-observables} \label{ss: EgsMO}
In this subsection we present examples of radial SLE martingale-observables including Schramm-Scheffield's obsevables, Friedrich-Werner's formula, and the restriction formula in the radial case.

\ms \textbf{Example (Schramm-Scheffield's obsevables)} In the chordal case, the 1-point functions of the bosonic fields
$$\wh\varphi(z) = \wh\E[\Phi_{D,p,q}(z)] = 2a\arg w(z) -2b\arg w'(z), \quad w:(D,p,q)\to(\H,0,\infty)$$
were introduced as SLE martingale-observables by Schramm and Sheffield, see \cite{SS10}.
Similarly, the 1-point functions $\wh\varphi= \wh\E[\Phi_{D,p,q}]$ of the bosonic fields in the radial conformal field theory we develop in this paper are martingale-observables of radial SLEs:
$$\wh\varphi(z) = \wh\E[\Phi_{D,p,q}(z)] = 2a\arg (1-w(z)) -a\arg w(z) -2b\arg \frac{w'(z)}{w(z)},$$
where $w:(D,p,q)\to(\D,1,0)$ is a conformal map.
By It\^o's calculus, we have 
\begin{align*}
\wh\varphi_t(z) &= \wh\E[\Phi_{D_t,\gamma_t,q}(z)]= 2a\arg (1-w_t(z)) -a\arg w_t(z) -2b\arg \frac{w_t'(z)}{w_t(z)} \\
&= 2a\arg (1-w(z)) -a\arg w(z) -2b\arg \frac{w'(z)}{w(z)} +\sqrt2\int_0^t\Re\, \frac{1+w_s(z)}{1-w_s(z)}~dB_s.
\end{align*}
One can use the $2$-point martingale-observables
$$\wh\E[\Phi(z_1)\Phi(z_2)]=2G(z_1,z_2)+\wh\varphi(z_1)\wh\varphi(z_2)$$
or Hadamard's variation formula
\begin{equation} \label{eq: Hadamard}
d G_{D_t}(z_1,z_2)=-\,\Re\,\frac{1+w_t(z_1)}{1-w_t(z_1)}\,\Re\,\frac{1+w_t(z_2)}{1-w_t(z_2)}\,dt=-\frac12\,d\langle \wh\varphi(z_1),\wh\varphi(z_2)\rangle_t
\end{equation}
to construct a coupling of radial SLE and the current field such that
$$\E[\,\wh J_{D,p,q}\,|\,\gamma[0,t]\,]=\wh J_{D_t,\gamma_t,q}.$$

\ms \textbf{Example} The 1-point functions of the chiral bi-bosonic fields 
\begin{align*}
M(z,z_0) = \wh\E[\Phiplus_{D,p,q}(z,z_0)] = &-ia\log(1-w(z))\phantom{_0} +\frac{ia}2\log w(z)\phantom{_0} +ib\log \frac{w'(z)}{w(z)}\phantom{_0} \\
&+ia\log(1-w(z_0)) -\frac{ia}2\log w(z_0) -ib\log \frac{w'(z_0)}{w(z_0)}
\end{align*}
are radial SLE martingale-observables with 
$$dM_t(z,z_0) =\frac1{\sqrt2}\Big(\frac{1+w_t(z)}{1-w_t(z)}-\frac{1+w_t(z_0)}{1-w_t(z_0)}\Big)\,dB_t$$
and 
$$d \langle M(z,z_0) \rangle_t = d\log\frac{w_t'(z)w_t'(z_0)}{(w_t(z)-w_t(z_0))^2}.$$
Their exponential martingales
$$e^{\alpha M_t(z,z_0) - \frac12\alpha^2 \langle M(z,z_0) \rangle_t}$$
are bi-vertex observables.

\ms While the processes
$$-ia\log(1-w_t(z))+\frac{ia}2\log w_t(z) +ib\log \frac{w_t'(z)}{w_t(z)}$$
are not local martingales, the 1-point functions of the rooted chiral bosonic fields
$$N(z) = -ia\log(1-w(z))+\frac{ia}2\log \frac{w(z)}{w'(q)} +ib\log \frac{w'(z)}{w(z)}$$
are martingale-observables with
$$dN_t(z) =\sqrt2\,\,\frac{w_t(z)}{1-w_t(z)}\,dB_t$$
and 
$$d \langle N(z) \rangle_t = d\log\frac{w_t'(z)w_t'(0)}{w_t(z)^2}.$$
Their exponential martingales
$$e^{\alpha N_t(z) - \frac12\alpha^2 \langle N(z) \rangle_t}$$
are 1-point observables of the rooted vertex fields.
In Section~\ref{sec: Vertex MO} we will discuss rooted multi-vertex observables.

\ms \textbf{Example (The restriction formula) \cite{Lawler05}} 
Recall the \emph{restriction property} of radial $\SLE_{8/3}:$
{\setlength{\leftmargini}{1.7em}
\begin{itemize}
\ms \item the law of $\SLE_{8/3}$ in $\D$ conditioned to avoid a fixed hull $K$ is identical to the law of $\SLE_{8/3}$ in $\D\sm K;$
\ms \item equivalently, there exist $\lambda$ and $\mu$ such that for all $K,$
$$\P(\SLE_{8/3} \textrm{ avoids }K) = |\psi_K'(1)|^\lambda (\psi_K'(0))^\mu,$$
where $\psi_K$ is the conformal map $(\D\sm K,0)\to (\D,0)$ satisfying $\psi_K'(0)>0.$
(The restriction exponents $\lambda$ and $\mu$ of radial $\SLE_{8/3}$ are equal to $5/8$ and $5/48,$ respectively.)
\end{itemize}}

\ms Let $\kappa\le4.$ 
On the event $\gamma[0,\infty)\cap K = \emptyset,$ a conformal map $h_t:\Omega_t = g_t(D_t\sm K)\to\D$ is defined by
$$h_t = \wt g_t \circ \psi_K \circ g_t^{-1},$$
where $\wt g_t$ is a Loewner map from $\wt D_t = D\sm\wt\gamma[0,t]$ onto $\D,$ and 
$\wt\gamma(t) = \psi_K \circ \gamma(t).$
Let
$$M_t := (\E\, \oneleg^\eff(\gamma_t;D_t\sm K)\,\|\,w_t),$$
where $\oneleg^\eff$ is the effective one-leg operator.
See Remark in Subsection~\ref{ss: main}.
Then 
$$M_t=|h_t'(\xi_t)|^\lambda h_t'(0)^\mu=\Big(\xi_t\frac{h_t'(\xi_t)}{h_t(\xi_t)}\Big)^\lambda h_t'(0)^\mu,\qquad(\xi_t = e^{i\sqrt\kappa B_t}),$$
where exponents are given by 
$$\lambda = h(\oneleg^\eff) \equiv \frac{a^2}2-ab = \frac{6-\kappa}{2\kappa}, \qquad \mu = H_q(\oneleg^\eff)\equiv\frac{a^2}4-b^2 = \frac{(\kappa-2)(6-\kappa)}{8\kappa}.$$ 
Restriction property of radial $\SLE_{8/3}$ follows from the local martingale property of $M_t$ (by optional stopping theorem). 
This is a special case of the following formula:
\begin{equation} \label{eq: restriction}
\textrm{the drift term of } dM_t= -\frac c6 \xi_t^2S_{h_t}(\xi_t) M_t\,dt.
\end{equation}

\ms We present the CFT argument to explain the reason for the appearance of the central charge $c$ and the Schwarzian derivative in \eqref{eq: restriction}.
To prove \eqref{eq: restriction}, denote
$$F(z,t):=(\E\,\oneleg^\eff_{\Omega_t}(z)\,\|\,\id)\,\big(=(\E\,\oneleg^\eff_{\D}\,\|\,h_t^{-1})(z)\big).$$ 
Then
$$M_t= \xi_t^\lambda F(\xi_t,t) = \xi_t^\lambda(\E\,\oneleg^\eff_{\Omega_t}(\xi_t)\,\|\,\id).$$
Since the function $F$ is smooth in both variables, we can apply It\^o's formula to the process $F(\xi_t,t)$ to get the drift term of ${dM_t}/{M_t},$ 
$$\Big(-\frac\kappa2 \lambda^2+\frac{\dot F(\xi_t,t)}{F(\xi_t,t)}-\big(\frac\kappa2+\kappa\lambda\big)\xi_t\frac{F'(\xi_t,t)}{F(\xi_t,t)}-\frac\kappa2\xi_t^2\frac{F''(\xi_t,t)}{F(\xi_t,t)}\Big)\,dt.$$ 
Using the similar method in \cite{KM11}, we represent $\dot F$ in terms of the Lie derivatives:
\begin{align} \label{eq: dotF1}
\dot F(z,t) &= \frac{d}{ds}\Big|_{s=0} (\E\, \oneleg^\eff_{\D}\,\|\, h_{t+s}^{-1}) (z)= \frac{d}{ds}\Big|_{s=0} (\E\, \oneleg^\eff_{\D}\,\|\, h_{t}^{-1}\circ f_{s,t}^{-1}) (z) \\
&= (\E\,\LL(v,\D) \,\oneleg^\eff_{\D}\,\|\,h_t^{-1})(z), \nonumber
\end{align}
where $f_{s,t} = h_{t+s}\circ h_{t}^{-1}$ and
$$(v\,\|\,\id_\D) = \frac{d}{ds}\Big|_{s=0} f_{s,t} = \dot h_t \circ h_t^{-1}.$$

\ss We only need to compute the vector field $v.$
We represent $v$ as the difference of two Loewner vector fields associated to the flows in the domains $\D$ and $\Omega_t.$
Applying the chain rule to $h_t = \wt g_t \circ \psi_K \circ g_t^{-1}$ and computing the capacity changes, we have 
$$\dot h_t(z) = |h_t'(\xi_t)|^2v_{\wt \xi_t}(h_t(z)) - h_t'(z)v_{\xi_t}(z), \quad \Big(v_{\xi}(z)=z\frac{\xi+z}{\xi-z}\Big),$$
where $\wt \xi_t = h_t(\xi_t).$
By the above equation and $(v\,\|\,\id_\D) = \dot h_t \circ h_t^{-1},$ 
\begin{equation} \label{eq: v}
(v\,\|\,\id_\D)(\zeta)=|h_t'(\xi_t)|^2v_{\wt\xi_t}(\zeta) - h_t'(h_t^{-1}(\zeta))v_{\xi_t}(h_t^{-1}(\zeta)).
\end{equation}
It follows from \eqref{eq: dotF1} and \eqref{eq: v} that 
\begin{align*}
\dot F(z,t) &= |h_t'(\xi_t)|^2 h_t'(z)^\lambda \,\E\, \big(\LL(v_{\wt\xi_t},\D) \,\oneleg^\eff_{\D} (h_t(z)\big) \,\|\, \id_\D) \\
&- \Big(\big(\E\,\LL(v_{\xi_t},\Omega_t\sm\{0\}) \,\oneleg^\eff_{\Omega_t}\,\|\,\id_{\Omega_t}\big)(z) + \mu \big(\E\,\oneleg^\eff_{\Omega_t}\,\|\,\id_{\Omega_t}\big)(z) \Big).
\end{align*}
\ss Now we represent $\dot F(\xi_t,t)$ in terms of $F(\xi_t,t), F'(\xi_t,t), F''(\xi_t,t)$ and the Schwarzian derivative $S_{h_t}(\xi_t).$
By Ward's equation, 
\begin{align*} 
\dot F(z,t) &= 2|h_t'(\xi_t)|^2 h_t'(z)^\lambda\wt\xi_t^2(\E\, T_\D(\wt \xi_t)\,\oneleg^\eff_\D (h_t(z))\,\|\,\id_\D)\\
& - (\E\,\LL(v_{\xi_t},\Omega_t\sm\{0\})\,\oneleg^\eff_{\Omega_t}\,\|\,\id_{\Omega_t}\big)(z)-\mu(\E\, \oneleg^\eff_{\Omega_t}\,\|\,\id_{\Omega_t}\big)(z).
\end{align*}
It follows from conformal invariance that
\begin{align*}
\dot F(z,t) &=2\xi_t^2(\E\, T_{\Omega_t}(\xi_t)\,\oneleg^\eff_{\Omega_t}(z)\,\|\,\id_{\Omega_t}) -\frac c{6} \xi_t^2 S_{h_t}(\xi_t)\, (\E\, \oneleg^\eff_{\Omega_t}\,\|\,\id_{\Omega_t}\big)(z)\\
&-(\E\,\LL(v_{\xi_t},\Omega_t\sm\{0\})\,\oneleg^\eff_{\Omega_t}\,\|\,\id_{\Omega_t}\big)(z)-\mu(\E\, \oneleg^\eff_{\Omega_t}\,\|\,\id_{\Omega_t}\big)(z).
\end{align*}
Let us now apply \eqref{eq: sing OPE} (and $T*_{-1}\oneleg^\eff = \pa \oneleg^\eff, T*_{-2}\oneleg^\eff = \lambda \oneleg^\eff$) to the right-hand side of the above equation:
$$\frac{\dot F(\xi_t,t)}{F(\xi_t,t)}
=2\xi_t^2\, \frac{(\E\,T_{\Omega_t}*\oneleg^\eff_{\Omega_t}(\xi_t)\,\|\,\id)}{(\E\,\oneleg^\eff_{\Omega_t}(\xi_t)\,\|\,\id)}
+3\xi_t\,\frac{F'(\xi_t,t)}{F(\xi_t,t)}+\lambda -\mu-\frac c6 \xi_t^2 S_{h_t}(\xi_t) .
$$
By the level two degeneracy for $\oneleg^\eff$ (Proposition~\ref{TVd2V}), we have
\begin{align*}
\frac{(\E\,T_{\Omega_t}* \oneleg^\eff_{\Omega_t}(\xi_t)\,\|\,\id)}{(\E\,\oneleg^\eff_{\Omega_t}(\xi_t)\,\|\,\id)} 
&=\frac{1}{2a^2} \frac{F''(\xi_t,t)}{F(\xi_t,t)}.
\end{align*}
Thus
\begin{align*}
\textrm{the drift term of }\frac{dM_t}{M_t}
&= -\frac c6 \xi_t^2 S_{h_t}(\xi_t) \,dt +\Big(\lambda - \frac\kappa2 \lambda^2 - \mu\Big)\,dt\\
&+ (3-\frac\kappa2 -\kappa\lambda)\frac{\xi_tF'(\xi_t,t)}{F(\xi_t,t)}\,dt + \Big(\frac1{a^2}-\frac\kappa2\Big)\frac{\xi_t^2F''(\xi_t,t)}{F(\xi_t,t)}\,dt.
\end{align*}
The numerologies 
$$a = \sqrt{2/\kappa}, \quad 2a(a+b)=1, \quad \lambda = a^2/2-ab, \quad \mu = a^2/4-b^2$$ 
give \eqref{eq: restriction}.
The exponents $\lambda=h(\oneleg^\eff)$ and $\mu = H_q(\oneleg^\eff)$ satisfy the so-called fusion rule~\eqref{eq: fusion rule}, $\lambda = \mu + \kappa\lambda^2/2.$

\ms We now prove Friedrich-Werner's formula in the radial case.
\begin{thm}\label{radial FW}
Let $z_j = e^{i\theta_j} (\theta_j\in \R)$ all distinct. Then for $a = \sqrt3/2$ and $b = -\sqrt3/6,$
$$\wh\E\,[\,T(z_1)\cdots T(z_n)\,\|\,\id_\D\,] = \lim_{t\to0}\frac{(-1)^n}{(2t)^n} e^{-2i\sum_{j=1}^n\theta_j}
\P(\SLE_{8/3}\textrm{ hits all }[r_te^{i\theta_j},e^{i\theta_j}]),$$
where $r_t = 1-2\sqrt t.$
\end{thm}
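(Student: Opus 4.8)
The plan is to deduce the formula from the restriction property of $\SLE_{8/3}$ together with the interpretation of the Virasoro field as the generator of conformal (slit) deformations. Write $\ell_j=[r_te^{i\theta_j},e^{i\theta_j}]$ for the radial slits and $z_j=e^{i\theta_j}$, and note that $\wh\E[T(z_1)\cdots T(z_n)]=\E[\wh T(z_1)\cdots\wh T(z_n)\,\|\,\id_\D]$ is exactly the $n$-point function $R(\mathbf z)$ of Subsection~\ref{ss: BPZ4X}. Since we are at $\kappa=8/3$ (that is, $c=0$, matching $a=\sqrt3/2,\ b=-\sqrt3/6$), I would first record the inclusion--exclusion identity
$$\P(\SLE_{8/3}\text{ hits all }\ell_j)=\sum_{S\subseteq\{1,\dots,n\}}(-1)^{|S|}\,\P(\SLE_{8/3}\text{ avoids }K_S),\qquad K_S=\bigcup_{j\in S}\ell_j,$$
which holds for every configuration and reduces the problem to the avoiding probabilities.

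Next I would feed in the restriction formula. At $c=0$ the computation around~\eqref{eq: restriction} gives $\P(\SLE_{8/3}\text{ avoids }K)=(\E\,\oneleg^\eff_{\D\sm K})/(\E\,\oneleg^\eff_{\D})=|\psi_K'(1)|^\lambda\,\psi_K'(0)^\mu$ with $\lambda=h(\oneleg^\eff),\ \mu=H_q(\oneleg^\eff)$, so each summand is a ratio of partition functions $Z_{\D\sm K_S}/Z_\D$ with $Z_\Omega=\E\,\oneleg^\eff_\Omega$. The single-slit input is a capacity estimate: the radial Loewner capacity of $[r_te^{i\theta},e^{i\theta}]$ is $\sim t$ as $t\to0$ (this is what the choice $r_t=1-2\sqrt t$ is designed to produce), so the removing map is $\psi_{\ell_j}=\id+t\,v_{z_j}+o(t)$ with the Loewner field $v_{z_j}(\zeta)=\zeta\tfrac{z_j+\zeta}{z_j-\zeta}$. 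Differentiating $\log Z_{\D\sm K}$ along this deformation and applying Ward's equation (Propositions~\ref{represent T} and~\ref{T=L+eff}) expresses the first variation as a contour integral of $v_{z_j}$ against the one-point function $\wh\E\,T$ of~\eqref{eg: T hat}; since $v_{z_j}$ has residue $-2z_j^2$ at $z_j$ while $\wh\E\,T$ is regular there, this yields
$$\log\frac{Z_{\D\sm\ell_j}}{Z_\D}=2t\,z_j^2\,\wh\E[T(z_j)]+o(t),$$
which already settles $n=1$ after using $\P(\text{hits})=1-\P(\text{avoid})$ and $z_j^2=e^{2i\theta_j}$.

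For general $n$ the mechanism is combinatorial. Expanding $\lambda\log|\psi_{K_S}'(1)|+\mu\log\psi_{K_S}'(0)$ in the slit capacities produces a connected (cluster) expansion
$$\log\frac{Z_{\D\sm K_S}}{Z_\D}=\sum_{\emptyset\ne B\subseteq S}\beta_B,\qquad \beta_B=O(t^{|B|}),$$
where $\beta_B$ gathers the contribution in which every slit of $B$ participates. Substituting this into the inclusion--exclusion sum and retaining the fully multilinear (degree $t^n$) part, only $S=\{1,\dots,n\}$ survives and the moment--cumulant identity gives
$$\P(\SLE_{8/3}\text{ hits all }\ell_j)=(-1)^n\sum_{\pi}\prod_{B\in\pi}\beta_B+o(t^n),$$
the sum being over set partitions $\pi$ of $\{1,\dots,n\}$. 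The crux is then the higher Ward identity
$$\beta_B=(2t)^{|B|}\Big(\prod_{j\in B}z_j^2\Big)\,\wh\E\big[\,\textstyle\prod_{j\in B}T(z_j)\big]^{c}+o(t^{|B|}),$$
identifying each connected slit cluster with the connected correlation of $\wh T$ (the case $|B|=1$ being the single-slit computation above). Granting this, the moment--cumulant expansion of $\wh\E[T(z_1)\cdots T(z_n)]$ reassembles the partition sum, and $\prod_jz_j^2=e^{2i\sum\theta_j}$ together with the prefactor $(-1)^n$ produces exactly the normalization $(-1)^n(2t)^{-n}e^{-2i\sum\theta_j}$ in the statement. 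As an independent check, one can confirm that the resulting limit satisfies the recursion of Example~(c) in Subsection~\ref{ss: BPZ4X} specialized to $c=0$.

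The main obstacle is the higher Ward identity for $\beta_B$ when $|B|\ge2$: one must carry the multi-slit deformation beyond first order (removing one slit displaces the others) and match the resulting expansion of $|\psi_{K_S}'(1)|$ and $\psi_{K_S}'(0)$ against iterated applications of Propositions~\ref{represent T} and~\ref{T=L+eff}, verifying that the connected cluster is governed precisely by the connected $|B|$-point function of the stress tensor with the correct residue and phase bookkeeping. Establishing the single-slit capacity asymptotics and showing that each $\beta_B$ is genuinely $O(t^{|B|})$---so that no lower-order cross terms leak into the $t^n$ coefficient---are the technical points requiring the most care; everything else is the controlled Taylor expansion of the restriction functional and the standard moment--cumulant algebra.
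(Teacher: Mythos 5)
Your overall strategy---inclusion--exclusion over the slits, the restriction formula $\P(\SLE_{8/3}\text{ avoids }K_S)=|\psi_{K_S}'(1)|^\lambda\psi_{K_S}'(0)^\mu$, and a cluster/moment--cumulant reassembly---is genuinely different from the paper's argument, and the combinatorial bookkeeping (the Ursell-type cancellation leaving $\sum_\pi\prod_{B\in\pi}\beta_B$ at order $t^n$) is sound as far as it goes. The $n=1$ computation is also correct and matches the paper's first-variation analysis in the restriction example. But the proof has a genuine gap exactly where you flag it: the ``higher Ward identity'' $\beta_B=(2t)^{|B|}\bigl(\prod_{j\in B}z_j^2\bigr)\wh\E\bigl[\prod_{j\in B}T(z_j)\bigr]^{c}+o(t^{|B|})$ for $|B|\ge2$, together with the clustering bound $\beta_B=O(t^{|B|})$, is not a technical remainder---for $n\ge2$ it \emph{is} the theorem. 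Nothing in the paper's toolkit (Propositions~\ref{represent T}, \ref{T=L+eff}, or the single-slit variation) directly delivers a second- or higher-order mixed variation of $\log Z_{\D\sm K_S}$ in several slit capacities: already at second order one must track how growing one slit displaces the others under the uniformizing map and show the cross term is governed by the connected two-point function of $T$ with the correct residues, and no argument is offered for this. Without it, the proof establishes only the $n=1$ case.

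It is worth seeing how the paper sidesteps precisely this difficulty. Instead of expanding $\log Z_{\D\sm K_S}$ to all orders in the slit capacities, the paper inducts on $n$ one slit at a time using the \emph{conditional} form of restriction: from $\P(\theta,\bfs\theta)=\P(\bfs\theta)-\P(\bfs\theta\,|\,\neg\theta)\,(1-\P(\theta))$, the conditioned probability $\P(\bfs\theta\,|\,\neg\theta)$ is, by the restriction property and conformal invariance, the same $n$-point hitting probability evaluated at the transformed points $\psi_t(z_j)$ with the Jacobian factors $\psi_t'(z_j)^2$, $|\psi_t'(1)|^\lambda$, $\psi_t'(0)^{\lambda/6}$ supplied by the inductive hypothesis. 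Dividing by $2t$ and letting $t\to0$ then requires only a \emph{first-order, single-slit} expansion, which is exactly the Lie derivative $\frac1{2z^2}\LL(v_z,\D)$; comparison with the Ward recursion \eqref{eq: recursion4T} for $R(\xi;\bfs z)$ and the common initial value at $n=0$ finishes the proof. If you want to salvage your route, you would need to prove the connected Ward identity for multi-slit deformations independently---which is at least as hard as the induction above---or else reduce your cluster coefficients $\beta_B$ to iterated single-slit variations, at which point you have essentially reconstructed the paper's argument.
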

\begin{proof}
Let us apply Ward's equations to the function
$$\E\,[\,\wh T(z) \wh T(z_1)\cdots \wh T(z_n)\,\|\,\id_\D\,] = \E\,[\,T(z)\oneleg^\eff(1) T(z_1)\cdots T(z_n)\,\|\,\id_\D\,],$$
by replacing $T(z)$ in the right-hand side with the corresponding Ward's functional.
Denote $\bfs{z} = (z_1,\cdots,z_n),$ $\bfs{z}_j = (z_1,\cdots,\wh z_j\cdots,z_n),$ and
$$R(\xi;\bfs{z}) = \E\,[\,\xi^\lambda \oneleg^\eff(\xi)\,T(z_1)\cdots T(z_n)\,],\qquad(\lambda = a^2/2-ab=5/8).$$
The non-random field $R(\xi;\bfs{z})$ is a boundary differential of conformal dimension $\lambda$ with respect to $\xi$ and of conformal dimension $2$ with respect to $z_j.$
It is also a differential of conformal dimension $\mu=a^2/4-b^2 = 5/48$ with respect to $q=0.$ 
It follows from Ward's equation (see \eqref{eq: T=effL}) for $\oneleg^\eff$ that
\begin{equation} \label{eq: recursion4T}
R(\xi;z,\bfs{z})=\frac{1}{2z^2}\,\LL({v_z},\D)\,R(\xi;\bfs{z}),\quad (\textrm{in }\id_\D),
\end{equation}
where $v_z(\zeta) = \zeta(z+\zeta)/(z-\zeta).$ 

\ms Let 
$$U(\theta_1,\cdots,\theta_n) = \lim_{t\to0}\frac{(-1)^n}{(2t)^n} e^{-2i\sum_{j=1}^n\theta_j}
\P(\SLE_{8/3}\textrm{ hits all }[r_te^{i\theta_j},e^{i\theta_j}])$$
(if the limit exists).
Define the non-random field $T(\xi;z_1,\cdots,z_n)$ as follows:
{\setlength{\leftmargini}{1.7em}
\begin{itemize}
\ms \item $T$ is a boundary differential of conformal dimension $\lambda=5/8$ with respect to $\xi,$ and of conformal dimension $2$ with respect to $z_j;$ 
\ms \item $T$ is a differential of conformal dimension $\mu=a^2/4-b^2 = 5/48$ with respect to $q=0;$ 
\ms \item $(T(e^{i\varphi};e^{i\theta_1},\cdots,e^{i\theta_n})\,\|\,\id_\D) = U(\theta_1-\varphi,\cdots,\theta_n-\varphi).$
\end{itemize}}

\ms We now claim that if the limit $U(\theta_1,\cdots,\theta_n)$ exits then the limit $U(\theta,\theta_1,\cdots,\theta_n)$ exists and 
\begin{equation} \label{eq: FW0}
T(1;z,\bfs{z}) = \frac1{2z^2} \,\LL({v_z},\D)\, T(1;\bfs{z})\qquad z,z_j\in\pa\D.
\end{equation}
By \eqref{eq: recursion4T} and \eqref{eq: FW0}, $T(1;\cdot)$ and $R(1;\cdot)$ satisfy the same recursive equation (see \eqref{eq: recursion4T2} below) and are therefore equal since $T(1;\cdot) = R(1;\cdot) = 1$ for $n=0.$
Thus $$U(\theta_1,\cdots,\theta_n) = R(1;e^{i\theta_1},\cdots,e^{i\theta_n}).$$

\ss To verify the induction argument for existence of the limit $U(\theta_1,\cdots,\theta_n)$ and show \eqref{eq: FW0}, denote $\bfs{\theta} = (\theta_1,\cdots,\theta_n).$
We write $\P(\bfs{\theta})$ for the probability that radial $\SLE_{8/3}$ path hits all segments $[r_te^{i\theta_j},e^{i\theta_j}]\,(1\le j\le n)$ and $\P(\bfs{\theta}\,|\,\neg \theta)$ for the same probability conditioned on the event that the path avoids $[r_te^{i\theta},e^{i\theta}].$
By the induction hypothesis,
\begin{equation} \label{eq: FW1}
\P(\bfs \theta)\approx (-2t)^{n} z_1^2\cdots z_n^2 \, T(1;\bfs z),
\end{equation}
where $z_j = e^{i\theta_j}.$
On the other hand, by the restriction property of radial $\SLE_{8/3},$ we have
\begin{equation} \label{eq: FW2}
1-\P(\theta) =|\psi_t'(1)|^\lambda \, \psi_t'(0)^{\lambda/6}
\end{equation}
and
\begin{equation} \label{eq: FW3}
\P(\bfs{\theta}\,|\,\neg \theta)\approx (-2t)^{n} \, T(\psi_t(1);\psi_t(z_1),\cdots, \psi_t(z_n))\prod_{j=1}^nz_j^2\psi_t'(z_j)^2 , \end{equation}
where $\psi_t$ is a slit map from $(\D\sm [r_t e^{i\theta},e^{i\theta}],0)$ onto $(\D,0)$ with $\psi_t'(0)>0$.
It follows from 
$\P(\theta,\bfs \theta) = \P(\bfs \theta)- \P(\bfs \theta\,|\,\neg \theta)(1-\P(\theta))$\,$(z = e^{i\theta})$ and \eqref{eq: FW1}~--~\eqref{eq: FW3} that 
$$\frac{\P(\theta,\bfs\theta)}{z^2z_1^2\cdots z_n^2(-2t)^{n+1}}$$ is equal to $$\frac{\psi_t'(0)^{\lambda/6} |\psi_t'(1)|^\lambda \prod_{j=1}^n\psi_t'(z_j)^2T(\psi_t(1);\psi_t(z_1),\cdots, \psi_t(z_n))-T(1;\bfs z)}{2tz^2} $$ 
up to $o(t)$ terms.
Thus the limit $U(\theta,\theta_1,\cdots,\theta_n)$ exists. 
Since $\psi_t'(0) = e^t,$
$$T(1;z,\bfs z) = \frac1{2z^2}\Big(\LL(v,\D\sm\{0\}) +\frac{\lambda}6\Big)T(1;\bfs z) = \frac1{2z^2} \,\LL({v},\D)\, T(1;\bfs z),$$
where $v$ is the vector field of flow $\psi_t.$ 
Thus $v = v_z$ and we get \eqref{eq: FW0}.
\end{proof}

\begin{rmk*} 
The formula \eqref{eq: recursion4T} holds for all $\kappa.$
Setting $R(z,\bfs z)\equiv R(1;z,\bfs z),$ 
the formula \eqref{eq: recursion4T} at $\xi = 1$ gives the following recursive formula for $R:$
\begin{align}\label{eq: recursion4T2}
R(z,\bfs{z}) &= \frac1{2z^2}\Big(2n\frac{1+z}{1-z} + 2\lambda \frac{z}{(1-z)^2} + (\frac {a^2}4-b^2) + \frac{1+z}{1-z}\sum_{j=1}^n z_j\pa_j\Big) R(\bfs{z}) \\
&+\frac1{2z^2}\sum_{j=1}^n\Big(z_j\frac{z+z_j}{z-z_j} \pa_j + 2\frac{z^2+2zz_j-z_j^2}{(z-z_j)^2}\Big) R(\bfs{z}) \nonumber \\
&+ \frac c2 \sum_{j=1}^n \frac1{(z-z_j)^4} R(\bfs{z}_j). \nonumber
\end{align}
\end{rmk*}

\ms\section{Multi-Vertex observables} \label{sec: Vertex MO}

In the previous section we prove that correlators of (primary) fields (without node at $q$) in $\wh{\FF}_{(b)}$ are martingale-observables of radial SLEs.
We extend this result to the multi-point vertex fields (with the neutrality condition).
In general, the multi-point (rooted) vertex fields have covariance at $q.$ 
We also discuss examples of vertex observables including Lawler-Schramm-Werner's derivative exponents of radial SLEs on the boundary.

\ms\subsection{1-point vertex fields} \label{ss: o hat}
Applying the rooting rule in Subsection~\ref{ss: O*} to the multi-vertex field $\wh\OO^{(\sigma,\sigma_{*})}(z)\, \wh\OO^{(\sigma_q,\sigma_{q*})}(z_0),$
we arrive to the following definition: 
\begin{align*}
\wh\OO^{(\sigma,\sigma_*;\sigma_q,\sigma_{q*})}(z) = &(w')^h (\overline{w'})^{h _*}w^{\wh\nu} \bar w^{\wh \nu_*}(w'_q)^{\wh h_q}(\overline{w'_q})^{\wh h_{q*}}(1-w)^{a\sigma}(1-\bar w)^{a\sigma_*}(1-|w|^2)^{\sigma\sigma_*}\\
&e^{\odot (i\sigma\Phiplus_{(0)}(z)-i\sigma_*\Phiminus_{(0)}(z)+i\sigma_q\Phiplus_{(0)}(q)-i\sigma_{q*}\Phiminus_{(0)}(q))},
\end{align*}
where the exponents are given by 
$$
h = \sigma^2/2-b\sigma, \quad
\wh\nu = (b-a/2+\sigma_q)\sigma, \quad
\wh h_q = \sigma_q(\sigma_q-a)/2
$$
and 
$$
h _* = \sigma_*^2/2-b\sigma_*, \quad 
\wh\nu_* = (b-a/2+\sigma_{q*})\sigma_*, \quad
\wh h_{q*} = \sigma_{q*}(\sigma_{q*}-a)/2.
$$
(An alternative way to define $\wh\OO^{(\sigma,\sigma_*;\sigma_q,\sigma_{q*})}$ will be discussed in the last subsection.)
If the neutrality condition $\sigma+ \sigma_* + \sigma_q + \sigma_{q*} = 0$ holds, then the formal fields $\wh\OO^{(\sigma,\sigma_*;\sigma_q,\sigma_{q*})}$ are Fock space fields. 
Furthermore, they are $\Aut(D,p,q)$-invariant primary fields. 

\ms In Subsection~\ref{ss: O hat} we prove that the (rooted) multi-vertex fields in the extended  OPE family $\wh\FF_{(b)}$ of $\wh\Phi_{(b)}$ have ``field Markov property." 
In particular, the 1-point non-random fields $\E\,\wh\OO^{(\sigma,\sigma_*;\sigma_q,\sigma_{q*})}$ (which have covariance at $q$ in general) with the neutrality condition are martingales-observables.
It turns out that the neutrality condition is the sufficient and necessary condition for the local martingale property of the formal non-random fields $\E\,\wh\OO^{(\sigma,\sigma_*;\sigma_q,\sigma_{q*})}.$ 

\ms \begin{prop}
The 1-point function $M^{(\sigma,\sigma_*;\sigma_q,\sigma_{q*})}$ of the formal vertex field $\wh\OO^{(\sigma,\sigma_*;\sigma_q,\sigma_{q*})}$ is a martingale-observable if and only if the neutrality condition 
$$\sigma+ \sigma_* + \sigma_q + \sigma_{q*} = 0.$$
holds.
\end{prop}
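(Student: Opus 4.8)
The plan is to verify the local martingale property by a direct It\^o computation applied to the explicit product form of the one-point function $M\equiv M^{(\sigma,\sigma_*;\sigma_q,\sigma_{q*})}$ in the $(\D,1,0)$-uniformization, and to read off the neutrality condition as the vanishing of the drift. First I would transport $M$ along the radial SLE flow: writing $w_t(z)=g_t(z)/\xi_t$ with $\xi_t=e^{i\sqrt\kappa B_t}$, the Loewner equation~\eqref{eq: g} yields the three stochastic differentials that drive the whole computation,
\begin{gather*}
dw_t = w_t\Big(\frac{1+w_t}{1-w_t}-\frac{\kappa}{2}\Big)dt - i\sqrt\kappa\, w_t\, dB_t,\\
d\log w_t'(z) = \Big(\frac{1+w_t}{1-w_t}+\frac{2w_t}{(1-w_t)^2}\Big)dt - i\sqrt\kappa\, dB_t,\\
d\log w_t'(q) = dt - i\sqrt\kappa\, dB_t.
\end{gather*}
Since $M$ is a differential of dimensions $[h,h_*]$ at $z$ and $[\wh h_q,\wh h_{q*}]$ at $q$, the transported process $M_t(z)$ equals the corresponding product of powers of $w_t,\bar w_t,w_t',\overline{w_t'},w_t'(q),\overline{w_t'(q)},(1-w_t),(1-|w_t|^2)$ times a constant; the last differential above records the deterministic capacity growth $w_t'(q)\sim e^{t}$ that feeds $\wh h_q+\wh h_{q*}$ into the drift.

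Next I would apply It\^o's formula to $\log M_t$, treated as a complex semimartingale, collecting the $dB_t$-coefficient $V$ and the $dt$-drift. The only nontrivial simplification is the identity
\[
\frac{|w|^2}{1-|w|^2}\Big(1+\frac{w}{1-w}+\frac{\bar w}{1-\bar w}\Big)=\frac{w}{1-w}\cdot\frac{\bar w}{1-\bar w},
\]
which turns every $(1-|w|^2)$-contribution into a polynomial in $P:=w/(1-w)$ and $\bar P$. After substituting $A=1+2P$, $B=P+P^2$, the drift of $dM_t/M_t$ becomes a polynomial $E_{00}+E_{10}P+E_{01}\bar P+E_{20}P^2+E_{02}\bar P^2$; its $P\bar P$-coefficient already cancels because $\kappa a^2=2$.

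Then, using the numerology $2a(a+b)=1$ (equivalently $1/a=2a+2b$, as in Proposition~\ref{TVd2V}), I would check that $E_{20}=E_{02}=0$ identically — these express only that $\OO$ carries the correct dimensions $[h,h_*]$ and are insensitive to neutrality. Setting $u:=\sigma+\sigma_q$, $u_*:=\sigma_*+\sigma_{q*}$, and $c_0:=\tfrac12(u_*-u)(u+u_*-a)$, the surviving coefficients reduce to $E_{10}=2\sigma(u-c_0/a)$, $E_{01}=2\sigma_*(u_*+c_0/a)$, and $E_{00}=\tfrac12u(u-a)+\tfrac12u_*(u_*-a)-c_0^2/a^2$. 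For a field with a genuine node ($\sigma\sigma_*\neq0$), $E_{10}=E_{01}=0$ force $u=c_0/a=-u_*$, i.e. $u+u_*=\sigma+\sigma_*+\sigma_q+\sigma_{q*}=0$; conversely neutrality $u_*=-u$ gives $c_0=au$, whence every $E$ vanishes and $M_t$ is driftless. The \emph{if} direction also follows immediately from Theorem~\ref{main O}, since under neutrality $\wh\OO$ is an honest field of $\wh\FF_{(b)}$ and $M=\wh\E\,\OO$.

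The main obstacle is the algebraic bookkeeping: keeping the holomorphic and antiholomorphic contributions separate, handling the deterministic $q$-drift correctly, and collapsing the $(1-|w|^2)$-terms via the identity above so that the drift is manifestly a polynomial in $P,\bar P$ whose coefficients factor through the total charge $u+u_*$. A secondary point is the degenerate cases ($\sigma=\sigma_*=0$, or purely chiral $\sigma_*=0$), where some $E_{jk}$ drop out automatically; there the statement must be read for vertex fields carrying a genuine node at $z$, and the remaining scalar conditions are checked directly to confirm that neutrality is again the exact criterion.
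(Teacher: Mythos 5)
Your proposal is correct and follows essentially the same route as the paper: a direct It\^o computation of the drift of the explicit $1$-point function along the radial Loewner flow, with the drift collapsing to a polynomial in $w/(1-w)$ and its conjugate whose coefficients vanish precisely under the neutrality condition (the paper merely organizes the product differently, factoring out the known bi-vertex observable $v^{(\sigma)}\overline{v^{(\bar\sigma_*)}}$ before applying It\^o). Your coefficient identities ($E_{20}=E_{02}=0$ identically, cancellation of the $P\bar P$ term via $\kappa a^2=2$, and $E_{10},E_{01},E_{00}$ expressed through $u=\sigma+\sigma_q$, $u_*=\sigma_*+\sigma_{q*}$) check out, and the implicit restriction to $\sigma\sigma_*\neq 0$ in the converse direction is present in the paper's argument as well.
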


\begin{proof}
Let 
$$v^{(\sigma)} (\equiv M^{(\sigma,0;-\sigma,0)})=(w')^h w^{\mu-\sigma^2} (w_q')^{\sigma^2/2+a\sigma/2}(1-w)^{a\sigma}$$ 
and 
$$u^{(\alpha,\beta)} = w^\alpha (w_q')^\beta,\qquad (\alpha = \sigma\sigma_q+\sigma^2, \quad \beta = \wh h_q - \sigma(\sigma+a)/2).$$ 
Then $v^{(\sigma)}$ is a vertex observable with 
$$\frac{dv_t^{(\sigma)} \overline{v_t^{(\bar\sigma_*)}}} {v_t^{(\sigma)}\overline{ v_t^{(\bar\sigma_*)}}}=i\sigma\sqrt2\frac{w_t}{1-w_t}\,dB_t-i\sigma_*\sqrt2\frac{\bar w_t}{1-\bar w_t}\,dB_t + 2\sigma\sigma_*\frac{|w_t|^2}{|1-w_t|^2}\,dt.$$
and 
$$M_t = 
{v_t^{(\sigma)}\overline{ v_t^{(\bar\sigma_*)}}} \,\,
{u_t^{(\alpha,\beta)}\overline{u_t^{(\bar\alpha_*,\bar\beta_*)}}} \,\,
{(1-|w_t|^2)^{\sigma\sigma_*}}.$$ 
 By It\^o's calculus, 
\begin{align*}
\textrm{the drift term of }\frac{dM_t}{M_t} &=\big(2\alpha + \sqrt{2\kappa}\sigma(\alpha+\beta-\alpha_*-\beta_*)\big)\frac{w_t}{1-w_t}\,dt\\
&+\big(2\alpha_* - \sqrt{2\kappa}\sigma_*(\alpha+\beta-\alpha_*-\beta_*)\big)\frac{\bar w_t}{1-\bar w_t}\,dt\\
&+\Big(\alpha+\beta+\alpha_*+\beta_*-\frac\kappa2(\alpha+\beta-\alpha_*-\beta_*)^2\Big)\,dt.
\end{align*}
Thus $M$ is a martingale-observable if and only if the neutrality condition holds.
Indeed, the numerologies $(\alpha = \sigma\sigma_q+\sigma^2, \,\beta = \wh h_q - \sigma(\sigma+a)/2)$ give 
$$\alpha+\beta= -\frac a2(\sigma+\sigma_q) + \frac12(\sigma+\sigma_q)^2.$$
Suppose that the neutrality condition holds. 
Then
$$\alpha+\beta-\alpha_*-\beta_*= -\frac a2(\sigma+\sigma_q-\sigma_*-\sigma_{q*})=-a(\sigma+\sigma_q)=-\frac\alpha\sigma\,\sqrt{2/\kappa} = \frac{\alpha_*}{\sigma_*}\,\sqrt{2/\kappa}$$
and
$$\frac\kappa2(\alpha+\beta-\alpha_*-\beta_*)^2=(\sigma+\sigma_q)^2=\alpha+\beta+\alpha_*+\beta_*. $$
Conversely, if the drift term of $dM_t$ vanishes, then 
$$\frac\alpha\sigma= -\frac{\alpha_*}{\sigma_*},$$
which is the neutrality condition.
\end{proof}

\ms \subsubsec{Constant fields}
The simplest examples of 1-point vertex fields are constant fields, i.e., vertex fields with $\sigma=\sigma_* = 0.$ 
By the neutrality condition, $\sigma_{q*} = -\sigma_q.$ 
Since $w_t'(0) = e^{t-i\sqrt\kappa B_t},$
$$M_t^{(0,0;\tau,-\tau)} = e^{\tau^2t+ia\sqrt\kappa\tau B_t} = e^{\tau^2t+i\sqrt2\tau B_t}.$$ 
This is a martingale.

\ms \subsubsec{Real fields} The 1-point vertex fields are real if and only if $\sigma = \sigma_*$ and $\sigma_q = \sigma_{q*}.$
By the neutrality condition, $\sigma_q = -\sigma.$
Thus the only real fields are
$$M^{(\sigma,\sigma;-\sigma,-\sigma)} = e^{t(\sigma^2+a\sigma)}|w'|^{\sigma^2-2b\sigma}|w|^{\sigma(2b-a-2\sigma)}|1-w|^{2a\sigma}(1-|w|^2)^{\sigma^2}.$$
When $\sigma = -a,$ there is no covariance at $q.$ 
In this special case,
$$M^{(-a,-a;a,a)} = \left|\frac{w'}{w}\right|^{1-2/\kappa}\left(\frac{1-|w|^2}{|1-w|^2}\right)^{2/\kappa}.$$

\ms If $\kappa =2,$ then $M^{(-a,-a;a,a)}$ coincides with the Lawler-Schramm-Werner observable 
$$M =\frac{1-|w|^2}{|1-w|^2} = \frac{P_\D(1,w)}{P_\D(1,0)} = \frac{P_D(p,z)}{P_D(p,q)},$$
where $P_D$ is the Poisson kernel of a domain $D.$
As we mentioned in Subsection~\ref{ss: intro MO} this 1-point field is an important observable in the theory of LERW.

\ms If $\kappa=4,$ then 
$$M = \left|\frac{w'}{w}\right|^{1/2}\left(\frac{1-|w|^2}{|1-w|^2}\right)^{1/2}$$
is Beffara's type observable for radial $\SLE_4.$
See \cite{AKL12}.
In the chordal case, Beffara's observables are real martingale-observables of conformal dimensions
$$h=h_*=\frac12-\frac\kappa{16}, \qquad \wh h_q=0$$
with the estimate 
$$\P(z,\ve)\asymp \ve^{1-\frac{\kappa}{8}}~M(z),$$
where
$\P(z,\ve)$ is the probability that the $\SLE_\kappa$ curve ($\kappa<8$) hits the disc at $z$ of size $\ve(\ll1)$ measured in a local chart $\phi.$ 
See \cite{Beffara08}.
In \cite{AKL12}, Feynman-Kac formula is used to construct radial SLE martingale-observables with the desired dimensions.

\ms \subsubsec{1-point vertex fields without covariance at $q$} 
A 1-point vertex field $M$ has no covariance at $q$ if and only if 
$$(\sigma_q,\sigma_{q,*}) = (0,0),\quad (a,0), \quad (0,a), \quad (a,a).$$
The first case is just the non-chiral vertex field
$$M = M^{(\sigma,-\sigma;0,0)}.$$

\subsection{Holomorphic 1-point fields} \label{ss: hol 1pt field}
When $\sigma_*=0,$ the 1-point vertex observable 
$$M^{(\sigma,0;\sigma_q,\sigma_{q*})} = (w'_q)^{\wh h_q}(\overline{w'_q})^{\wh h_{q*}} (w')^h w^{\wh\nu} (1-w)^{a\sigma}$$
is holomorphic.
Recall the expression for the exponents 
$$h = \sigma^2/2-b\sigma, \qquad
\wh\nu = \sigma(b-a/2+\sigma_q), \qquad
\wh h_q =\sigma_q(\sigma_q-a)/2,$$ 
and $\wh h_{q*} = \sigma_{q*}(\sigma_{q*}-a)/2.$

\ms \subsubsec{Holomorphic 1-point fields without spin at $q$} 
A holomorphic 1-point field $M$ has no spin at $q$ if and only if 
$$\wh h_q = \wh h_{q*}.$$
Equivalently,
$$\sigma_q =\sigma_{q*} \quad\textrm{or}\quad \sigma_q +\sigma_{q*}=a.$$

\ms \textbf{Case 1)} $\sigma_q =\sigma_{q*}.$
By the neutrality condition, we have 
$$\bfs\sigma = (\sigma,0;-\sigma/2,-\sigma/2).$$
The holomorphic 1-point field $\wh\OO^{(\sigma,0;-\sigma/2,-\sigma/2)}$ is a generalization of the one-leg operator $\OO^{(a,0;-a/2,-a/2)}.$
In this case, the conformal dimensions of $\wh\OO^{(\sigma,0;-\sigma/2,-\sigma/2)}$ are 
$$h = \frac{\sigma^2}2-b\sigma, \qquad \wh h_q=\frac{\sigma^2}{8} + \frac{a\sigma}4.$$
Thus
$$M_t= e^{2\wh h_qt}\Big(\frac{w_t'}{w_t}\Big)^{\sigma^2/2-b\sigma} \Big(\frac{w_t}{(1-w_t)^2}\Big)^{-{a\sigma}/2}.$$

\ms\begin{eg*}[Derivative exponents on the boundary \cite{LSW01c}]
On the unit circle, (up to constant)
$$M_t(e^{i\theta}) = e^{2\wh h_qt} |w_t'(e^{i\theta})|^h\Big(\sin^2\frac{\theta_t}{2}\Big)^{a\sigma/2},$$
where $w_t(e^{i\theta}) = e^{i\theta_t}.$
Given $h,$ the equation $h = \sigma^2/2-b\sigma $ solves
$$\sigma_\pm = \frac a 4\big(\kappa-4 \pm \sqrt{(\kappa-4)^2+16\kappa h } \big).$$
With the choice of $\sigma = \sigma_+,$ Lawler, Schramm, and Werner proved that 
$M_t(e^{i\theta})$ is a martingale. 
They applied the optional stopping theorem to $M_t(e^{i\theta})$ and used the estimate 
$$\E[M_t(e^{i\theta})] \asymp e^{2\wh h_qt}\, \E[|w'_t(e^{i\theta})|^h \mathbf{1}_{\{\tau_{e^{i\theta}} > t\}}]$$
to derive the derivative exponents:
$$\E[|w'_t(e^{i\theta})|^h \mathbf{1}_{\{\tau_{e^{i\theta}} > t\}}] \asymp e^{-2\wh h_q t} \Big(\sin^2 \frac{\theta}2\Big)^{a\sigma/2}.$$
(Recall that $\tau_z$ is the SLE stopping time, the first time when a point $z$ is swallowed by the hull of SLE, see Subsection~\ref{ss: intro MO}.) 
From the derivative exponent for $\kappa=6,$ they obtained the annulus crossing exponent for $\SLE_6$ and combined it with other exponents to prove Mandelbrot's conjecture that the Hausdorff dimension of the planar Brownian frontier is $4/3$. 
See \cite{LSW01a} and references therein.
\end{eg*}

\ms\begin{eg*}
The field $M$ is a scalar if $\sigma = 2b.$
In this case,
$$M_t= e^{t(\kappa-4)/{8}}\Big(w_t+\frac1{w_t}-2\Big)^{(\kappa-4)/(2\kappa)}.$$
Its derivative $M_1=\pa M$ has the conformal dimensions $[1,0;\wh h_q,\wh h_q].$ 
It is not a vertex observable. 
If $M_1 = M^{(\sigma_1,\sigma_{1*};\sigma_{1q},\sigma_{1q*})}$ with $h _1 = 1,$ then
$\sigma_1 = -2a$ or $\sigma_1 = 2a+2b.$ 
As we will see below, the holomorphic 1-differentials without spin at $q$ are not forms of $\pa M^{(2b,0,-b,-b)}.$
Unlike the chordal case (see Proposition~9.2 in \cite{KM11}), the holomorphic differential observables are not necessarily vertex observables. 
\end{eg*}

\ms\begin{eg*}
If we take $\sigma = 2b-a,$ then $\bfs\sigma = (2b-a,0;-b+a/2,-b+a/2)$ and 
$$h = \frac{6-\kappa}{2\kappa} = -\frac{a\sigma}2,\quad \wh h_q = \frac{(6-\kappa)(2-\kappa)}{16\kappa}, \quad \wh\nu =0.$$ 
In this case, we have 
$$M_t = e^{2\wh h_qt} \Big(\frac{w_t'}{(1-w_t)^2}\Big)^{(6-\kappa)/(2\kappa)}.$$
If $\kappa=2,$ then
$$M = \frac{w'}{(1-w)^2}$$
and its anti-derivative is $-1/(1-w).$ 
See the next example.
\end{eg*}

\ss \textbf{Case 2)} $\sigma_q +\sigma_{q*}=a.$
It follows from the neutrality condition that 
$$\bfs\sigma = (-a,0;\sigma_q,a-\sigma_q).$$
Thus 
$$M = (w')^{(\kappa-2)/(2\kappa)}w^{a(-\sigma_q+a/2-b)}(1-w)^{-2/\kappa}|w_q'|^{\sigma_q^2-a\sigma_q}.$$

\ss\begin{eg*} The fields $M$ have no covariance at $q$ if and only if $\sigma_q = 0$ or $a.$ 
In these cases, we have
$$M^{(-a,0;0,a)} = (w')^{(\kappa-2)/(2\kappa)}w^{(6-\kappa)/(2\kappa)}(1-w)^{-2/\kappa}$$
or
$$M^{(-a,0;a,0)} = \Big(\frac{w'}{w}\Big)^{(\kappa-2)/(2\kappa)}(1-w)^{-2/\kappa}.$$
For example, if $\kappa =6,$ then 
$$M^{(-a,0;0,a)} = \Big(\dfrac{w'}{1-w}\Big)^{1/3},$$
and if $\kappa = 2,$ then both $M^{(-a,0;a,0)}$ and $M^{(-a,0;0,a)}$ produce the same observable 
$$M^{(-a,0;a,0)}=M^{(-a,0;0,a)}=\frac1{1-w}.$$
\end{eg*}

\ms \subsubsec{Holomorphic differentials without spin at $q$} 
A holomorphic 1-point field $M$ is a 1-differential with respect to $z$ if and only if 
$\sigma = -2a$ or $\sigma = 2(a+b).$ 
Furthermore, if $M$ has no spin at $q,$ then $\sigma_q = \sigma_{q*}.$
(If $\sigma = -2a$ or $\sigma = 2(a+b),$ then the other possibility $\sigma_q+\sigma_{q*}=a$ never happens because of the neutrality condition and the numerology $2a(a+b)=1.$)

\ms \textbf{Case 1)} $\sigma = -2a.$ 
By the neutrality condition, we have 
$\bfs\sigma= (-2a,0;a,a)$
and
$$M = w'w^{2/\kappa-1}(1-w)^{-4/k}.$$

\ms\begin{eg*}
If $\kappa=2,$ then
$M^{(-2a,0;a,a)} = w'(1-w)^{-2}$ and its anti-derivative is $-1/(1-w).$ 
See the previous example.
\end{eg*}

\ms\begin{eg*}
If $\kappa=4,$ then
$M^{(-2a,0;a,a)} = w'(1-w)^{-1}w^{-1/2}$ and its anti-derivative is $\log(1+\sqrt w)-\log(1-\sqrt w).$ Its imaginary part is a bosonic observable for a twisted conformal field theory. See \cite{KMZ12}.
\end{eg*}

\ms \textbf{Case 2)} $\sigma = 2(a+b).$ In this case, we have 
$\bfs\sigma= (2a+2b,0;-a-b,-a-b)$ by the neutrality condition, 
and
$$M_t = e^{t(\kappa+4)/8}w_t' w_t^{-3/2}(1-w_t).$$

\ms\subsection{Multi-point observables} \label{ss: O hat}
Rooting procedure applied to 1-point vertex fields in Subsection~\ref{ss: o hat} can be extended to the multi-vertex fields. 
Applying the rooting rules to the multi-vertex field \eqref{eq: OOhat}, we arrive to the definition of the rooted vertex field $\wh\OO^{(\bfs\sigma,\bfs\sigma_*;\tau,\tau_*)}:$ 
\begin{align} \label{eq: OOhat*}
\wh\OO^{(\bfs\sigma,\bfs\sigma_*;\tau,\tau_*)}(\bfs z) &= (w'_q)^{\wh h_q}(\overline{w'_q})^{\wh h_{q*}} \prod_{j} \wh M_j \prod_{j<k} I_{j,k}\\
&\,
e^{\odot i(\tau\Phiplus_{(0)}(q) - \tau_*\Phiminus_{(0)}(q) + \sum \sigma_j\Phiplus_{(0)}(z_j) - \sigma_{j*}\Phiminus_{(0)}(z_j))},
\nonumber
\end{align}
where the interaction terms $I_{jk}$ are given by \eqref{eq: Ijk} and 
$$\wh M_j = (w_j')^{h_j} (\overline{w_j'})^{h_{j*}}w_j^{\wh\nu_j} \bar w_j^{\wh\nu_{j*}}(1-w_j)^{a\sigma_j}(1-\bar w_j)^{a\sigma_{j*}} (1-|w_j|^2)^{\sigma_j\sigma_{j*}}$$
with the exponents
$$\wh\nu_j = \wh\mu_j + \sigma_j\tau = (b-a/2+\tau)\sigma_j,\qquad \wh\nu_{j*} = \wh\mu_{j*} + \sigma_{j*}\tau_* = (b-a/2+\tau_*)\sigma_{j*}.$$ 
The rooted vertex field $\OO^{(\bfs\sigma,\bfs\sigma_*;\tau,\tau_*)}$ has conformal dimensions $[\wh h_q, \wh h_{q*}]$ at $q:$
$$\wh h_q = \frac{\tau^2}2 -\frac{\tau a}2,\qquad \wh h_{q*} = \frac{\tau_*^2}2-\frac{\tau_* a}2.$$
If the neutrality condition holds, then $\wh\OO^{(\bfs\sigma,\bfs\sigma_*;\tau,\tau_*)}$ is a well-defined Fock space field.
Furthermore, it is an $\Aut(D,p,q)$-invariant primary field.

\ms Alternatively, the definition \eqref{eq: OOhat*} can be arrived to by the action of operator $\XX \mapsto \wh\XX$ (produced by the insertion of $\oneleg$) on $\OO^{(\bfs\sigma,\bfs\sigma_*;\tau,\tau_*)}.$
The operator $\XX \mapsto \wh\XX$ acts on formal fields by the formula
$$\Phiplus_{(0)}(z) \mapsto \Phiplus_{(0)}(z) -\frac{ia}2\log\frac{(1-w(z))^2}{w(z)},\qquad 
\Phiplus_{(0)}(q) \mapsto \Phiplus_{(0)}(q) +\frac{ia}2\log w'(q),$$
$$\Phiminus_{(0)}(z) \mapsto \Phiminus_{(0)}(z) +\frac{ia}2\log\frac{(1-\overline{w(z)})^2}{\overline{w(z)}},\qquad 
\Phiminus_{(0)}(q) \mapsto \Phiminus_{(0)}(q) -\frac{ia}2\log \overline{w'(q)},$$
and the rules
$$\pa\XX\mapsto\pa\wh\XX,\qquad \bp\XX\mapsto\bp\wh\XX,\qquad \alpha\XX+\beta\YY\mapsto \alpha\wh\XX+\beta\wh\YY,\qquad \XX\odot\YY\mapsto\wh\XX\odot\wh\YY$$
for formal fields $\XX$ and $\YY$ in $D.$ 
Since the interaction terms do not change by the action of operator $\XX \mapsto \wh\XX,$
$\OO\equiv \OO^{(\bfs\sigma,\bfs\sigma_*;\tau,\tau_*)}$ maps to:
$$\wh\OO = \,(w_q')^{-\tau a/2} (\bar w_q')^{-\tau_* a/2} \prod_j (1-w_j)^{a\sigma_j}w_j^{-a\sigma_j/2} (1-\bar w_j)^{a\sigma_{j*}}\bar w_j^{-a\sigma_{j*}/2}\OO.$$
Thus $\wh\OO =\wh\OO^{(\bfs\sigma,\bfs\sigma_*;\tau,\tau_*)}.$

\ms For rooted multi-vertex fields $\OO \equiv \OO^{(\bfs\sigma,\bfs\sigma_*;\tau,\tau_*)}$ in the extended OPE family $\FF_{(b)}$ of $\Phi_{(b)}$ (with the neutrality condition), we define 
$$\wh\E\, \OO : = \frac{\E\, \oneleg(p)\star\OO}{\E\, \oneleg(p)}, \qquad \wh\E_\xi\, \OO : = \frac{\E\, \oneleg(\xi)\star\OO}{\E\, \oneleg(\xi)}$$
for $\xi\in D$ so that $\wh\E\, \OO = \wh\E_p\, \OO.$
Then as in Proposition~\ref{hat E=E hat}, it can be shown that 
$$\wh\E\, \OO = \E\,\wh\OO.$$
Of course, one can show it directly by the algebra of vertex operators,
$$\OO^{(\bfs\sigma_1,\bfs\sigma_{1*};\tau_1,\tau_{1*})} \star \OO^{(\bfs\sigma_2,\bfs\sigma_{2*};\tau_2,\tau_{2*})} =\OO^{(\bfs\sigma_1+\bfs\sigma_2,\bfs\sigma_{1*}+\bfs\sigma_{2*};\tau_1+\tau_2,\tau_{1*}+\tau_{2*})}.$$

\ms The following version of BPZ-Cardy equations holds for rooted multi-vertex fields $\OO \equiv\OO^{(\bfs\sigma,\bfs\sigma_*;\tau,\tau_*)}$ with the neutrality condition.
Like BPZ-Cardy equations for the tensor product of fields in the OPE family of $\Phi_{(b)},$ we derive the following proposition from Ward's equations for rooted multi-vertex fields and the level two degeneracy equations for $\oneleg.$

\ms \begin{prop} \label{BPZ4O}
If $2a(a+b)=1,$ then in the identity chart of $\D$ we have
\begin{equation}\label{eq: BPZ4O}
-\frac{1}{a^2}\partial_\theta^2 \wh\E_\xi \OO = \wh\E_\xi[\LL_{v_\xi}\OO]+(\wh h_q + \wh h_{q*})\wh\E_\xi[\OO],\qquad v_\xi(z):=z\frac{\xi+z}{\xi-z},
\end{equation}
where $[\wh h_q,\wh h_{q*}]$ are the conformal dimensions of $\wh\OO$ at $q,$ and  $\pa_\theta$ is the operator of differentiation with respect to the real variable $\theta.$
\end{prop}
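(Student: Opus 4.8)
The plan is to mimic the proof of Proposition~\ref{BPZ4X} verbatim, now taking the one-leg operator itself as the ``$V$'' in the Ward equation of Proposition~\ref{Ward4VX}, so that the covariance of $\OO$ at $q$ is absorbed into the effective dimension $H_q^\eff(\oneleg\star\OO)$. First I would record that in the identity chart of $(\D,1,0)$ one has $\E\,\oneleg(\zeta)=\zeta^{-h}$ (since $w=\id$ gives $w'/w=1/\zeta$ and $|w_q'|=1$, while the Wick exponential has unit expectation), so that the normalized bulk observable
$$R_\zeta := \wh\E_\zeta\,\OO = \frac{\E\,\oneleg(\zeta)\star\OO}{\E\,\oneleg(\zeta)} = \E\big[\zeta^h\,\oneleg(\zeta)\star\OO\big],\qquad(\zeta\in\D),$$
restricts on the unit circle to $R_\xi=\wh\E_\xi\,\OO$. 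This is the exact analogue of the quantity $R_\zeta=\E[\zeta^h\oneleg(\zeta)X]$ used before.

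Next I would apply Proposition~\ref{Ward4VX} with $V=\oneleg$ and the rooted multi-vertex field $\OO$. Because $\oneleg$ is holomorphic with vanishing anti-holomorphic dimension at its own node, $\LL^-_{v_{\zeta^*}}\oneleg(\zeta)=0$, so the left-hand side collapses to $\E[\oneleg(\zeta)\star(\LL^+_{v_\zeta}\OO+\LL^-_{v_{\zeta^*}}\OO)]$, exactly as in the proof of Proposition~\ref{BPZ4X}. The one genuinely new ingredient is the coefficient of the zero-mode term. Using the additivity of the $q$-charges under $\star$ together with \eqref{eq: qdim4O}, I would compute
$$H_q^\eff(\oneleg\star\OO) = \frac{(\tau-a/2)^2+(\tau_*-a/2)^2}{2}-b^2 = (\wh h_q+\wh h_{q*}) + H_q^\eff(\oneleg),$$
where $\wh h_q=\tfrac{\tau^2}2-\tfrac{\tau a}2$, $\wh h_{q*}=\tfrac{\tau_*^2}2-\tfrac{\tau_* a}2$ are the dimensions of $\wh\OO$ at $q$. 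Combined with the fusion rule \eqref{eq: fusion rule} for $\oneleg$, i.e. $h-H_q^\eff(\oneleg)=h^2/a^2$, this yields
$$h(\oneleg)-H_q^\eff(\oneleg\star\OO)=\frac{h^2}{a^2}-(\wh h_q+\wh h_{q*}).$$

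I would then substitute the level-two degeneracy $2a^2 L_{-2}\oneleg=\pa^2\oneleg$ (Proposition~\ref{TVd2V}) and $L_{-1}\oneleg=\pa\oneleg$ into the right-hand side of Ward's equation and run the same algebra as in Proposition~\ref{BPZ4X} — differentiating $\zeta^{-h}R_\zeta$ and using $h=3a^2/2-1/2$ — to reach
$$\E\big[\zeta^h\oneleg(\zeta)\star(\LL^+_{v_\zeta}\OO+\LL^-_{v_{\zeta^*}}\OO)\big] = \frac1{a^2}\big(\zeta^2\pa_\zeta^2 R_\zeta+\zeta\pa_\zeta R_\zeta\big)-(\wh h_q+\wh h_{q*})R_\zeta.$$
Finally I would let $\zeta\to\xi=e^{i\theta}$: the operator $\zeta^2\pa_\zeta^2+\zeta\pa_\zeta=(\zeta\pa_\zeta)^2$ becomes $-\pa_\theta^2$, the identity $\xi=\xi^*$ turns the left side into $\wh\E_\xi[\LL_{v_\xi}\OO]$ (invoking the boundary extension of Proposition~\ref{Ward4VX} noted after its statement), and rearranging produces \eqref{eq: BPZ4O}. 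The main obstacle is the bookkeeping in the displayed $H_q^\eff$ identity: one must verify that $\star$ combines the $q$-charges of $\oneleg$ and $\OO$ additively and that it is the effective (rather than the naive) dimension that enters Ward's equation, so that precisely the shift $-(\wh h_q+\wh h_{q*})$ survives to become the extra covariance term; the remaining steps are the same computations already carried out for Proposition~\ref{BPZ4X}.
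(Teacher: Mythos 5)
Your proposal is correct and follows essentially the same route as the paper: apply Proposition~\ref{Ward4VX} with $V=\oneleg$, use holomorphy of $\oneleg$ to kill $\LL^-$ at its node, invoke the additivity $H_q^\eff(\oneleg\star\OO)=H_q^\eff(\oneleg)+\wh h_q+\wh h_{q*}$ together with the fusion rule and level-two degeneracy, and pass to the boundary. The only (welcome) addition is that you verify the effective-dimension identity explicitly via additivity of the $q$-charges under $\star$, which the paper merely asserts.
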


The Lie derivative operators $\LL_{v_\xi}$ do not apply to the marked interior point $q,$ i.e., $\LL_{v_\xi} = \LL({v_\xi},\D\sm\{0\}).$ 

\begin{proof}
For $\xi\in\pa\D,\zeta\in\D, $ denote
$$R_\xi\equiv R_\xi(z_1,\cdots, z_n)=\wh{\E}_\xi \OO, \quad R_\zeta\equiv R(\zeta;z_1,\cdots, z_n)=\E [\zeta^h \oneleg(\zeta)\star\OO],$$
where $h =h(\oneleg)= a^2/2 -ab.$
Since $\oneleg$ is a holomorphic $[h,0]$-differential ($\LL_v^-\oneleg=0$), it follows from Proposition~\ref{Ward4VX} that 
\begin{align*}
\E\,\oneleg(\zeta)\star(\LL_{v_\zeta}^+\OO+\LL_{v_{\zeta^*}}^-\OO )
&=2\zeta^2\,\E[(L_{-2}\oneleg)(\zeta)\star\OO] +3\zeta\,\E[(L_{-1}\oneleg)(\zeta)\star\OO]\\ &+(h(\oneleg)-H_q^\eff(\oneleg\star\OO))\,\E\,\oneleg(\zeta)\star\OO.
\end{align*}
The effective dimensions of $\oneleg\star\OO$ and $\oneleg$ are related as 
$$H_q^\eff(\oneleg\star\OO) = H_q^\eff(\oneleg) + \wh h_q + \wh h_{q*}.$$
As in the proof of Proposition~\ref{BPZ4X} , it follows from the fusion rule ($h-H_q^\eff(\oneleg) = h^2/a^2$), $L_{-1}\oneleg = \pa\oneleg$ and the level two degeneracy equations for $\oneleg$ ($2a^2L_{-2}\oneleg = \pa^2\oneleg$) that 
\begin{align*}
\E\,\oneleg(\zeta)\star(\LL^+_{v_\zeta}\OO +\LL^-_{v_{\zeta^*}}\OO)&= \frac{1}{a^2}\zeta^{2}\pa_\zeta^2(\zeta^{-h}R_\zeta)+3\zeta\pa_\zeta(\zeta^{-h} R_\zeta)\\&+\Big(\frac{h^2}{a^2}-\wh h_q - \wh h_{q*}\Big)\,\zeta^{-h}R_\zeta.
\end{align*}
It simplifies (by the numerology $2a(a+b)=1$)
$$\E\,\zeta^{h}\oneleg(\zeta)\star(\LL^+_{v_\zeta}\OO +\LL^-_{v_{\zeta^*}}\OO)= \frac1{a^2}\big(\zeta^2\pa_\zeta^2R_\zeta + \zeta\pa_\zeta R_\zeta\big) - (\wh h_q + \wh h_{q*})R_\zeta.$$
Sending $\zeta$ to $\xi,$ we get BPZ-Cardy equations~\eqref{eq: BPZ4O}.
\end{proof}

\ms We now prove that the non-random fields $\E\,\wh\OO^{(\bfs\sigma,\bfs\sigma_*;\tau,\tau_*)}$ (with the neutrality condition) are radial SLE martingale-observables.

\ms \begin{proof}[Proof of Theorem~\ref{main O}]
The process $M_t =M_{D_t,\gamma_t,q}$ is represented by 
\begin{equation} \label{eq: M}
M_t = m(\xi_t,t), \qquad m(\xi,t) =\big(R_\xi\,\|\,g_t^{-1}\big),
\end{equation}
where $g_t$ is the SLE conformal map, $g_t(\gamma_t)=\xi_t = e^{i\theta_t},$ and $\theta_t=\sqrt\kappa B_t.$
Applying It\^o's formula to the process $m(\xi_t,t)$ as in the proof of Theorem~\ref{MO}, 
$dM_t$ has the drift term 
\begin{equation*} 
-\frac\kappa2\Big((\xi\pa_\xi)^2\Big|_{\xi=\xi_t}~m(\xi,t)\Big)\,dt+\big(\LL_{v_{\xi_t}}R_{\xi_t}\,\|\,g_{t}^{-1}\big)\, dt + (\wh h_q+\wh h_{q*})M_t\,dt,
\end{equation*}
where the Lie derivative operators $\LL_{v_{\xi_t}}$ do not apply to the origin, i.e., $\LL_{v_{\xi_t}} = \LL({v_{\xi_t}},\D\sm\{0\}).$ 
The extra term $(\wh h_q+\wh h_{q*})M_t\,dt$ comes from the covariance structure of $M$ at $q$ and the fact that $g_t'(0) = e^t.$ (The chart in the representation \eqref{eq: M} is not $w_t^{-1}$ but $g_t^{-1}.$)
By BPZ-Cardy equations~\eqref{eq: BPZ4O}, the drift term of $dM_t$ vanishes.
\end{proof} 

\ms Multi-vertex observables are not covering all solutions of It\^o's equation. 
A collection of primary observables can be further expanded by the method of screening. 

\ms
%%%%%%%%%%%%%%% \begin{bibliography} %%%%%%%%%%%%%%%
%\bibliographystyle{amsalpha}
%\bibliography{KM2}
%\input{KM2.bbl}
\def\cprime{$'$}
\providecommand{\bysame}{\leavevmode\hbox to3em{\hrulefill}\thinspace}
\providecommand{\MR}{\relax\ifhmode\unskip\space\fi MR }
% \MRhref is called by the amsart/book/proc definition of \MR.
\providecommand{\MRhref}[2]{%
  \href{http://www.ams.org/mathscinet-getitem?mr=#1}{#2}
}
\providecommand{\href}[2]{#2}

%%%%%%%%%%%%%%%% \end{bibliography} %%%%%%%%%%%%%%%%

\end{document}